\renewcommand*\env@matrix[1][*\c@MaxMatrixCols c]{%
  \hskip -\arraycolsep
  \let\@ifnextchar\new@ifnextchar
  \array{#1}}
\newtheorem{lemma}{Lemma}[section]
\newtheorem{theorem}[lemma]{Theorem}
\newtheorem{prop}[lemma]{Proposition}
\newtheorem{cor}[lemma]{Corollary}
\newtheorem{claim*}{Claim}
\newtheorem{thm}[lemma]{Theorem}
\newtheorem{defn}[lemma]{Definition}
\newtheorem{example}[lemma]{Example}
\theoremstyle{remark}
\newtheorem{remark}[lemma]{Remark}
\newtheorem{rmk}[lemma]{Remark}
\newcommand{\PP}{{\mathbb P}}
\newcommand{\C}{{\mathbb C}}
\newcommand{\F}{{\mathbb F}}
\newcommand{\Q}{{\mathbb Q}}
\newcommand{\RR}{{\mathbb R}}
\newcommand{\Z}{{\mathbb Z}}
\newcommand{\Qbar}{{\overline{\Q}}}
\newcommand{\kperf}{k^{\textup{p}}}
\newcommand{\kbar}{{\overline{k}}}
\newcommand{\Ybar}{Y_{\kbar}}
\newcommand{\Wbar}{W_{\kbar}}
\newcommand{\kk}{{\mathbf k}}
\newcommand{\phibar}{{\overline{\phi}}}
\newcommand{\calD}{{\mathcal D}}
\newcommand{\calE}{{\mathcal E}}
\newcommand{\calF}{{\mathcal F}}
\newcommand{\calL}{{\mathcal L}}
\newcommand{\calO}{{\mathcal O}}
\newcommand{\calQ}{{\mathcal Q}}
\newcommand{\OO}{{\mathcal O}}
\newcommand{\frakS}{{\mathfrak S}}
\newcommand{\mm}{{\mathfrak m}}
\newcommand{\scrF}{{\mathscr F}}
\newcommand{\scrI}{{\mathscr I}}
\DeclareMathOperator{\HH}{H}
\DeclareMathOperator{\hh}{h}
\DeclareMathOperator{\im}{im}
\DeclareMathOperator{\Aut}{Aut}
\DeclareMathOperator{\Gal}{Gal}
\DeclareMathOperator{\Br}{Br}
\DeclareMathOperator{\Gr}{Gr}
\DeclareMathOperator{\Sym}{Sym}
\DeclareMathOperator{\Pic}{Pic}
\DeclareMathOperator{\bPic}{{\bf Pic}}
\DeclareMathOperator{\Spec}{Spec}
\DeclareMathOperator{\PGL}{PGL}
\DeclareMathOperator{\fppf}{fppf}
\DeclareMathOperator{\red}{red}
\DeclareMathOperator{\rank}{rank}
\DeclareMathOperator{\disc}{disc}
\DeclareMathOperator{\id}{id}
\DeclareMathOperator{\CH}{CH}
\DeclareMathOperator{\Prym}{Prym}
\DeclareMathOperator{\bPrym}{\mathbf{Prym}}
\DeclareMathOperator{\NS}{NS}
\DeclareMathOperator{\alg}{alg}
\DeclareMathOperator{\PPrym}{\mathbf{PPrym}}
\DeclareMathOperator{\Tor}{Tor}
\DeclareMathOperator{\SK}{SK}
\DeclareMathOperator{\K}{K}
\DeclareMathOperator{\bCH2}{\mathbf{CH}^2}
\DeclareMathOperator{\Bl}{Bl}
\DeclareMathOperator{\reduc}{red}
\newcommand{\eps}{\varepsilon}
\newcommand{\Xbar}{X_{\kbar}}
\newcommand{\ejp}{\eps_*j_*p^*}
\newcommand{\pje}{p_*j^*\eps^*}
\newcommand{\isom}{\simeq}
\newcommand{\Deltatilde}{\tilde{\Delta}}
\newcommand{\gammatilde}{\tilde{\gamma}}
\newcommand{\Ptilde}{\tilde{P}}
\newcommand{\Ptildem}[1]{\tilde{P}^{(#1)}}
\newcommand{\Pm}[1]{{P}^{(#1)}}
\newcommand{\Stildem}[1]{\tilde{S}^{(#1)}}
\newcommand{\Sm}[1]{{S}^{(#1)}}
\newcommand{\piDelta}{\varpi}
\newcommand{\piX}{\pi}
\newcommand{\DoubleCover}{Y_{\Deltatilde/\Delta}}
\newcommand{\piDoubleCover}{\tilde\pi}
\newcommand{\piQuadricSurface}{\piX_1}
\newcommand{\GammaZ}{\Gamma_Z}
\newcommand{\algequiv}{\sim_{\alg}}
\newcommand{\lattice}{L}
\DeclareMathOperator{\mult}{mult}
\numberwithin{equation}{section}
\numberwithin{table}{section}
\newcommand{\defi}[1]{\textsf{#1}} 
\newcommand{\pt}{w}
\title{Curve classes on conic bundle threefolds and applications to rationality}
\author{Sarah Frei}
\address{Department of Mathematics, Dartmouth College, 27 N. Main Street, Hanover, NH 03755}
\email{sarah.frei@dartmouth.edu}
\urladdr{http://math.dartmouth.edu/\~{}sfrei}
\author{Lena Ji}
\address{Department of Mathematics, University of Michigan, 530 Church Street, Ann Arbor, MI 48109-1043}
\email{lenaji.math@gmail.com}
\urladdr{http://www-personal.umich.edu/\~{}lenaji}
\author{Soumya Sankar}
\address{Department of Mathematics, The Ohio State University, 231 West 18th Avenue,
Columbus, OH 43210-1174}
\email{sankar.40@osu.edu}
\urladdr{https://math.osu.edu/people/sankar.40}
\author{Bianca Viray}
\address{University of Washington, Department of Mathematics, Box 354350, Seattle, WA 98195, USA}
\email{bviray@uw.edu}
\urladdr{http://math.washington.edu/\~{}bviray}
\author{Isabel Vogt}
\address{Brown University, Department of Mathematics, Box 1917, 151 Thayer Street, Providence, RI 02912, USA}
\email{ivogt.math@gmail.com}
\urladdr{https://www.math.brown.edu/ivogt/}
\keywords{Conic bundles, rationality, Prym varieties, intermediate Jacobians, curve classes}
\subjclass[2020]{Primary: 14C25. Secondary: 14E08, 14G27, 14H40, 14K30.}
\begin{document}

\begin{abstract}
    We undertake a study of conic bundle threefolds \(\pi\colon X\to W\) over geometrically rational surfaces whose associated discriminant covers \(\Deltatilde\to\Delta\subset W\) are smooth and geometrically irreducible. We first show that the structure of the Galois-module \(\CH^2\Xbar\) of rational equivalence classes of curves is captured by a group scheme that is a generalization of the Prym variety of \(\Deltatilde\to\Delta\).  This generalizes Beauville's result that the algebraically trivial curve classes on \(\Xbar\) are parametrized by the Prym variety.
    
    We apply our structural result on curve classes to study the refined intermediate Jacobian torsor (IJT) obstruction to rationality introduced by Hassett--Tschinkel and Benoist--Wittenberg. The first case of interest is \(W = \PP^2\) and \(\Delta\) is a smooth plane quartic.  In this case, we show that the IJT obstruction characterizes rationality when the ground field has less arithmetic complexity (precisely, when the \(2\)-torsion in the Brauer group of the ground field is trivial).  We also show that a hypothesis of this form is necessary by constructing,
    over any \(k \subset\mathbb R\), a conic bundle threefold with \(\Delta\) a smooth quartic where the IJT obstruction vanishes, yet \(X\) is irrational over \(k\).
\end{abstract}

\maketitle

\section{Introduction}

    Let \(\piX\colon X\to W\) be a conic bundle threefold over a smooth geometrically rational surface \(W\), all defined over a field \(k\) of characteristic different from \(2\).  Despite the simple geometry of the base and fibers of this morphism, the family of all such conic bundle threefolds exhibits rich geometric variation.  Indeed, this family of conic bundles includes: rational varieties (e.g., when \(W = \PP^2\) and \(\pi\) has a rational section), geometrically rational non-rational varieties~\cite{bw-cg}*{Theorem 1.1}, unirational but non-stably rational varieties~\cite{AM-Luroth}, and stably rational but non-rational varieties \cite{BCTS-Zariski}*{Th\'eor\`eme 3}. In addition, a folklore conjecture predicts that this family even contains geometrically non-unirational varieties (see \cite{prokhorov-rationality-conic-bundles}*{Sect.~14.2}).
    
    In this paper we restrict to conic bundle threefolds whose total space is smooth. Furthermore, we assume that the singular fibers of \(\pi\) lie above a smooth, geometrically irreducible curve \(\Delta\subset W\) and that \(\pi^{-1}(\Delta)\) is geometrically irreducible (i.e., the conic bundle is \defi{geometrically ordinary} and \defi{geometrically standard}, c.f.~Section \ref{sec:Notation}). Such a conic bundle gives rise to a geometrically irreducible \'etale double cover \(\piDelta\colon \Deltatilde\to \Delta\), known as the \defi{discriminant cover} of \(X\), which
    controls much of the geometry of the conic bundle. 
    
    Indeed, over an \emph{algebraically closed} field, the discriminant cover uniquely determines the isomorphism class of the generic fiber of \(\pi\). In addition, Beauville proved that the subgroup \((\CH^2 X)^0\subset \CH^2 X\) of algebraically trivial curve classes modulo rational equivalence is isomorphic to the Prym variety \(\Prym_{\Deltatilde/\Delta}\)~\cite{beauville-ij}*{Thm. 3.1}.  By translating, we obtain \((\CH^2 X)^\gamma \simeq\Prym_{\Deltatilde/\Delta}\) for any \(\gamma\in \NS^2 X\).

    Over \emph{nonclosed} fields, however, the discriminant cover need not determine the isomorphism class of the generic fiber of \(\pi\).  Despite this, we prove that the discriminant cover characterizes curve classes, and, in fact, determines the structure of \((\CH^2 \Xbar)^\gamma\) as a torsor under \((\CH^2 \Xbar)^0\) for any \(\gamma\in (\NS^2\Xbar)^{G_k}\).
    \begin{thm}[{Follows from Theorem~\ref{thm:IJTtorsorsOfConicBundles}}]\label{thm:MainTorsors}
        Let \(k\) be a field of characteristic different from \(2\), and let \(\pi\colon X\to W\) be a smooth conic bundle over a smooth geometrically rational surface, all defined over \(k\). Assume that the discriminant cover \(\Deltatilde\to \Delta\subset W\) is smooth and geometrically irreducible.  
        Then there is a Galois-equivariant surjective group homomorphism from \(\CH^2 \Xbar\) to the \(\kbar\)-points of the \defi{\((\bPic_{W/k})\)-polarized Prym scheme},
        \[
            \PPrym_{\Deltatilde/\Delta}^{\bPic_{W/k}} \colonequals \bPic_{\Deltatilde/k}\times_{\bPic_{\Delta/k}}\bPic_{W/k}
        \]
        that restricts to an isomorphism \((\CH^2\Xbar)^0 \xrightarrow{\sim} \Prym_{\Deltatilde/\Delta}(\kbar)\).  In particular, for every
        \(\gamma\in \NS^2\Xbar\) this homomorphism induces an isomorphism between \((\CH^2\Xbar)^{\gamma}\) and the \(\kbar\)-points of a geometric connected component of \(\PPrym_{\Deltatilde/\Delta}^{\bPic_{W/k}}\) that is compatible with the actions of the identity components.
    \end{thm}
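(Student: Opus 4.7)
My plan is to construct an explicit Galois-equivariant group homomorphism $\Phi \colon \CH^2 \Xbar \to \Pic \Deltatilde_{\kbar}$ via a correspondence, show that its image lies in $\PPrym_{\Deltatilde/\Delta}^{\Pic \Wbar}(\kbar)$, and then identify its restriction to algebraically trivial cycles with Beauville's theorem while establishing surjectivity onto each component. Let $S \colonequals \piX^{-1}(\Delta) \subset X$ denote the surface of reducible conic fibers, and let $\widetilde{S} \to S$ be its normalization. Under our hypotheses ($X$ smooth, $\Deltatilde$ smooth, discriminant cover geometrically irreducible and étale), $\widetilde{S}$ is smooth and the induced morphism $q \colon \widetilde{S} \to \Deltatilde$ is a $\PP^1$-bundle whose fiber over a geometric point $\tilde p$ is the component $\ell_{\tilde p}$ of the reducible conic over $\piDelta(\tilde p)$. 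Writing $r \colon \widetilde{S} \to X$ for the finite morphism factoring through $S \hookrightarrow X$, I define
\[
    \Phi([C]) \colonequals q_* r^*[C] \in \Pic \Deltatilde_{\kbar}
\]
for a $1$-cycle $C$ on $\Xbar$ meeting $S_{\kbar}$ properly, then extend to all of $\CH^2 \Xbar$ by a moving-lemma argument (with a separate case analysis for curves contained in $S_{\kbar}$, where the strict transform in $\widetilde{S}$ is used) and verify rational-equivalence invariance via a family argument over the parameter variety.

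To check that $\Phi$ factors through $\PPrym_{\Deltatilde/\Delta}^{\Pic \Wbar}(\kbar)$, I use the commutativity $\piDelta \circ q = \piX \circ r$ together with the projection formula and the identity $r_*[\widetilde{S}] = [\piX^{-1}(\Delta)] = \piX^*[\Delta]$ to compute
\[
    \piDelta_* \Phi([C]) = (\piX \circ r)_* r^*[C] = \piX_*\bigl([C] \cdot [\piX^{-1}(\Delta)]\bigr) = \piX_*[C] \cdot [\Delta].
\]
Thus $\piDelta_* \Phi([C])$ equals the restriction to $\Delta_{\kbar}$ of the class $\piX_*[C] \in \Pic \Wbar$, so it lies in the image of $\Pic \Wbar \to \Pic \Delta_{\kbar}$, as required.

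The restriction of $\Phi$ to algebraically trivial classes coincides (up to sign) with the inverse of Beauville's Abel--Jacobi-type isomorphism \cite{beauville-ij}*{Thm.~3.1}, yielding the desired isomorphism $(\CH^2 \Xbar)^0 \xrightarrow{\sim} \Prym_{\Deltatilde/\Delta}(\kbar)$. For surjectivity onto the full polarized Prym scheme, I would produce explicit $1$-cycles hitting each component: each line $\ell_{\tilde p}$ in a reducible fiber satisfies $\Phi([\ell_{\tilde p}]) = [\tilde p]$, and taking $\piX$-preimages of curves on $W$ produces representatives of the remaining cosets in $\PPrym^{\Pic \Wbar}(\kbar)/\Prym(\kbar)$. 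The torsor assertion for $(\CH^2 \Xbar)^\gamma$ then follows automatically: any two elements of $(\CH^2 \Xbar)^\gamma$ differ by an algebraically trivial class and therefore have $\Phi$-images in the same component; equivariance under the identification of identity components together with Beauville's isomorphism upgrades this set-theoretic map to an isomorphism of $\Prym$-torsors.

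The main obstacle will be proving that $\Phi$ descends to a well-defined homomorphism on $\CH^2 \Xbar$: the morphism $r$ is neither flat nor a regular immersion, so $r^*$ is only defined through general-position representatives, and independence of representative requires a careful correspondence/family argument (together with a direct definition for cycles contained in $S$). A secondary challenge is identifying the component group of $\PPrym^{\Pic \Wbar}$ explicitly in terms of $\Pic \Wbar$ and the double cover $\Deltatilde \to \Delta$ so that one can verify that the explicit curves above exhaust every component.
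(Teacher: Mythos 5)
Your correspondence is the right one and your overall strategy (define a cycle-theoretic map to $\Pic\Deltatilde_{\kbar}$, check the image lands in $\PPrym^{\Pic\Wbar}_{\Deltatilde/\Delta}$, identify the restriction to $(\CH^2\Xbar)^0$ with Beauville's isomorphism, and produce explicit curves hitting every component) matches the paper's. But there is a concrete error that is symptomatic of the gap you yourself flag. You claim $\Phi([\ell_{\tilde p}])=[\tilde p]$ for a component $\ell_{\tilde p}$ of a reducible fiber. This contradicts your own computation two paragraphs earlier: since $\ell_{\tilde p}$ is fibral, $\piX_*[\ell_{\tilde p}]=0$, so $\piDelta_*\Phi([\ell_{\tilde p}])$ must vanish in $\Pic\Delta_{\kbar}$, whereas $\piDelta_*[\tilde p]=[\piDelta(\tilde p)]$ is a nonzero degree-one class (not in the image of $\Pic\Wbar$ unless $\deg\Delta=1$). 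The correct value is $(\iota_*-1)[\tilde p]=[\iota(\tilde p)]-[\tilde p]$, which lies in the nonidentity component $\Ptilde$ of $V_{\OO_\Delta}$; this is exactly Beauville's identity $\pje\circ\ejp=\iota_*-1$ (Lemma~\ref{lem:BeauvilleGroupLem} in the paper), and it is what makes the component-counting work (odd-degree fibral divisors move you between the two components of each $V_D$, even-degree ones do not). Your error comes from defining $\Phi$ on cycles contained in $X_\Delta$ by ``take the strict transform in $\widetilde S$'': for such cycles the naive strict transform is not compatible with the general-position definition, and the discrepancy is governed by the excess/self-intersection formula $j^*j_*\beta=c_1(N)\cap\beta$, which produces $(\iota_*-1)$ rather than the identity.

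This is also why the well-definedness problem you identify as ``the main obstacle'' should not be attacked by a moving-lemma-plus-family argument on $X$ itself. The paper's route is to first blow up $X$ along the curve $\delta(\Delta)$ of singular points of the fibers over $\Delta$; on the smooth blow-up $X'$ the proper transform $S$ of $X_\Delta$ is a smooth $\PP^1$-bundle over $\Deltatilde$, regularly embedded as a divisor, so the map is the composite $p_*j^*\eps^*$ of a pushforward, a refined Gysin homomorphism, and the blow-up pullback, all of which are well defined on Chow groups with no case analysis. Two further points you elide: (i) Beauville's surjectivity onto $(\CH^2\Xbar)^0$ is stated for $W=\PP^2$ and must be checked to extend to any smooth surface with $(\CH^qW_{\kbar})^0=0$; and (ii) ``taking $\piX$-preimages of curves on $W$'' gives surfaces, not $1$-cycles --- you need geometric sections of $X_{C_0}\to C_0$, which the paper produces via Tsen's theorem, and you then need the corrected fibral-curve computation to reach \emph{both} components of each $V_D$.
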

    \begin{remark}
        Note that the connected component of the identity in \(\PPrym_{\Deltatilde/\Delta}^{\bPic_{W/k}}\) is the usual Prym variety, which justifies the terminology.
    \end{remark}
    Since \(\PPrym_{\Deltatilde/\Delta}^{\bPic_{W/k}}\) is a \(k\)-scheme, these isomorphisms show that \((\CH^2\Xbar)^{0}\) (also known as the intermediate Jacobian of \(X\)) descends to an abelian variety over \(k\) and that, for \(\gamma\in (\NS^2\Xbar)^{G_k}\), the connected components \((\CH^2 \Xbar)^{\gamma}\) descend to torsors defined over \(k\).  
    In the case that \(k\subset\C\), the descent results for the intermediate Jacobian follow from work of  Achter, Casalaina-Martin, and Vial~\cite{ACMV-models}*{Thm. A}.  When \(k = \RR\), the descent results for the torsors follow from work of Hassett and Tschinkel~\cite{HT-intersection-quadrics}.  These results were extended to geometrically rational threefolds over arbitrary (not necessarily perfect) fields by Benoist and Wittenberg~\cite{bw-ij} (see~\cite{HT-cycle} for a proof over subfields of \(\C\)).  Benoist and Wittenberg proved their descent results by constructing  a codimension 2 Chow \(k\)-scheme \(\bCH2\) whose \(\kbar\)-points agree (Galois-equivariantly) with the \(\kbar\)-points of the codimension 2 Chow group \cite{bw-ij}.
    Thus in the case that \(\piX\colon X \to W\) is geometrically rational, our theorem gives the following description of the connected components of the codimension 2 Chow scheme defined by Benoist and Wittenberg.
    \begin{thm}[Follows from Theorem~\ref{thm:IJTtorsorsOfConicBundles}]\label{thm:MainTorsorScheme}
        Let \(k\) be a field of characteristic different from \(2\), and let \(\pi\colon X\to W\) be a smooth \emph{geometrically rational} conic bundle over a smooth geometrically rational surface \(W\), all defined over \(k\), with smooth and geometrically irreducible discriminant cover \(\Deltatilde\to \Delta\). Then every connected component of \(\bCH2_{X/k}\) is isomorphic to a connected component of the \((\bPic_{W/k})\)-polarized Prym scheme. 
    \end{thm}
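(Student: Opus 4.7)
The plan is to upgrade the Galois-equivariant isomorphism of $\kbar$-points from Theorem~\ref{thm:MainTorsors} to a scheme-theoretic isomorphism by using the Benoist--Wittenberg codimension-$2$ Chow scheme $\bCH2_{X/k}$ as the descent model for $\CH^2\Xbar$. The hypothesis that $X$ is geometrically rational and that either $k$ is perfect or $W(k)\neq\emptyset$ is exactly what is needed to invoke their construction of $\bCH2_{X/k}$ as a smooth $k$-scheme with $\bCH2_{X/k}(\kbar) = \CH^2\Xbar$ as Galois modules. Under this identification, a $G_k$-fixed class $\gamma\in\NS^2\Xbar$ corresponds to a connected component $(\bCH2_{X/k})^\gamma$ defined over $k$; the identity component $(\bCH2_{X/k})^0$ is the algebraic representative of the intermediate Jacobian, an abelian variety over $k$, and $(\bCH2_{X/k})^\gamma$ is a $k$-torsor under it. On the target side, $\PPrym_{\Deltatilde/\Delta}^{\Pic\Wbar}$ is by construction a $k$-subscheme of $\bPic_{\Deltatilde/k}$, with identity component the Prym variety $\Prym_{\Deltatilde/\Delta}$ over $k$, and with components corresponding to $G_k$-fixed connected components being $k$-torsors under $\Prym_{\Deltatilde/\Delta}$.

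Next I would match identity components. Theorem~\ref{thm:MainTorsors} supplies a Galois-equivariant isomorphism $(\CH^2\Xbar)^0 \xrightarrow{\sim} \Prym_{\Deltatilde/\Delta}(\kbar)$; since both sides are the $\kbar$-points of abelian varieties over $k$ and the isomorphism comes from the explicit cycle-class construction underlying Theorem~\ref{thm:IJTtorsorsOfConicBundles} (which is a natural transformation of group functors), it descends to a $k$-isomorphism $(\bCH2_{X/k})^0 \xrightarrow{\sim} \Prym_{\Deltatilde/\Delta}$. For a $k$-rational component indexed by $\gamma$, both $(\bCH2_{X/k})^\gamma$ and its counterpart component of $\PPrym_{\Deltatilde/\Delta}^{\Pic\Wbar}$ are then torsors under the identified abelian variety $\Prym_{\Deltatilde/\Delta}$, so their classes live in $H^1(k,\Prym_{\Deltatilde/\Delta})$. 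The Galois-equivariant bijection on $\kbar$-points from Theorem~\ref{thm:MainTorsors} forces these torsor classes to agree, whence the components are isomorphic as $k$-schemes.

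The main obstacle I anticipate is establishing that the map in Theorem~\ref{thm:MainTorsors} is actually algebraic, i.e., that the surjection $\CH^2\Xbar \twoheadrightarrow \PPrym_{\Deltatilde/\Delta}^{\Pic\Wbar}(\kbar)$ is the $\kbar$-points of a morphism $\bCH2_{X/k} \to \PPrym_{\Deltatilde/\Delta}^{\Pic\Wbar}$ of $k$-schemes, rather than a mere isomorphism of Galois modules. The natural candidate is given by pushing a curve class in $X$ forward along the discriminant correspondence $\Deltatilde\times_\Delta \piX^{-1}(\Delta) \to \Deltatilde$, but verifying that this assignment descends fppf-locally and yields a genuine morphism is the technical heart; this is also where the assumption on $k$ enters, both to legitimize the Benoist--Wittenberg descent for $\bCH2_{X/k}$ and to trivialize potential Brauer-type obstructions arising from the absence of a rational point on $W$.
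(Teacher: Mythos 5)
Your overall architecture matches the paper's, but the final descent step has a genuine gap, and you have misplaced the role of the hypothesis ``\(k\) perfect or \(W(k)\neq\emptyset\)''. The Galois group acting on \(\kbar\)-points is \(G_k=\Gal(\kbar/\kperf)\), so comparing the cocycles attached to a \(G_k\)-equivariant, \(P(\kbar)\)-equivariant bijection of \(\kbar\)-points only shows that the two torsors agree in \(\HH^1(\kperf,P(\kbar))\), i.e., that they become isomorphic over the \emph{perfect closure}; this is exactly the content of the paper's Lemma~\ref{lem:Torsors_kbarPoints}. To descend to \(k\) the paper either takes \(k=\kperf\), or uses that all the torsors in question are \(2\)-primary --- the Prym-side components by Corollary~\ref{cor:2primary}, and the Chow-side components by Proposition~\ref{prop:NS2}\eqref{part:krationalclasses}, which is precisely where \(W(k)\neq\emptyset\) enters --- so that the isomorphism over a finite purely inseparable ($p$-power degree) extension forces an isomorphism over \(k\) because \(\Char k\neq 2\). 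This has nothing to do with ``legitimizing the Benoist--Wittenberg descent'' (their construction of \(\bCH2_{X/k}\) needs only geometric rationality) nor with Brauer-type obstructions; as written, your argument asserts a \(k\)-isomorphism of torsors where only a \(\kperf\)-isomorphism is available.

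On the point you flag as the technical heart: the paper never upgrades the forward map \(\pje\colon\CH^2\Xbar\to\PPrym^{\Pic\Wbar}_{\Deltatilde/\Delta}(\kbar)\) to a morphism of schemes. Instead it constructs a morphism of group schemes in the \emph{opposite} direction, \(\bPic_{\Deltatilde/k}\to\bCH2_{X/k}\) (the composite \(\ejp\) through the \(\PP^1\)-bundle \(S\) over \(\Deltatilde\)), via a Zariski-descent argument defining \(j_*\) on \(T\)-points (Proposition~\ref{prop:ejp-SchemeVersion}); combined with Beauville's computation and the universal property of the algebraic representative, this identifies the identity components as principally polarized abelian varieties over \(k\) (Theorem~\ref{thm:jac0}). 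The non-identity components are then matched purely group-cohomologically via Lemma~\ref{lem:Torsors_kbarPoints}, so the algebraicity of \(\pje\) is never needed. Your plan to make \(\pje\) itself algebraic is thus harder than necessary, and even if carried out it would not by itself resolve the imperfect-field descent issue above.
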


In order to prove Theorem~\ref{thm:MainTorsorScheme}, we need an explicit morphism between \(\bCH2_{X/k}\) and the \((\bPic_{W/k})\)-polarized Prym scheme. This requires the construction of pushforward morphisms between codimension 2 Chow schemes as well as morphisms between codimension 2 Chow schemes and Picard schemes of curves. The existence of these morphisms, introduced in Section~\ref{sec:BWreview}, extend general results of \cite{bw-ij} and may be of independent interest in the study of codimension 2 Chow schemes of geometrically rational threefolds.
    
    \subsection{Applications to rationality}
    	Although a geometric rationality criterion for conic bundles over minimal rational surfaces has been known since the 1980s \cite{Shokurov-Prym}, a rationality criterion over nonclosed fields has remained stubbornly out of reach, even when the base is \(\PP^2\).  If \(W = \PP^2\), then one can verify that the rationality constructions over \(\kbar\) descend to give rational parametrizations whenever \(\deg\Delta\leq 3\) and \(X(k)\neq\emptyset\) (see Section~\ref{sec:low_degree}).  Thus, the natural case of interest is when \(W = \PP^2\) and \(\deg \Delta = 4\).  Contributing to the difficulty in this case is the fact that several of the classical rationality obstructions vanish under mild additional assumptions. Indeed, results for conic bundle surfaces show that \(X\) is unirational whenever \(X_{(\PP^2-\Delta)}(k)\neq\emptyset\) (see Section~\ref{sec:low_degree}), and the Artin--Mumford exact sequence shows that \(\Br X = \Br \PP^3\) whenever \(\Deltatilde\) is geometrically irreducible \cite{Poonen-Qpoints}*{Theorem 6.8.3, Proof of Proposition 6.9.15}; hence the Brauer group obstruction vanishes.
    	Additionally, if \(\Delta\) is smooth, then results of Bruin~\cite{bruin} combined with Theorem~\ref{thm:MainTorsors} above show that the intermediate Jacobian of \(X\) is always isomorphic to the Jacobian of a genus \(2\) curve; hence, Clemens and Griffiths's intermediate Jacobian obstruction \cite{Clemens-Griffiths-ij} vanishes.  Furthermore, the birational automorphism group of \(X\) contains the automorphism group of the generic fiber, which is an infinite orthogonal group by~\cite{Borel}*{Corollary 18.3}.
     Hence, the obstruction of Iskovskikh and Manin \cite{IskovskihManin} vanishes.

 Recently, Hassett and Tschinkel~\cite{HT-intersection-quadrics} refined the intermediate Jacobian obstruction over \(\RR\) to the \defi{intermediate Jacobian torsor} or \defi{IJT} obstruction. This was extended to arbitrary, not necessarily perfect, fields by Benoist and Wittenberg~\cite{bw-ij} (see also~\cite{HT-cycle} for \(k\subset \C\)).
 These authors showed that if \(Y\) is a \(k\)-rational threefold then \((\CH^2 \Ybar)^0\) descends, over \(k\), to an abelian variety that is isomorphic to the Jacobian of a smooth curve \(\Gamma\) (this is an extension of Clemens and Griffiths's classical intermediate Jacobian obstruction to nonclosed fields). In addition, these authors showed that, in the case that \(\Gamma\) is connected and of genus \(\geq 2\),  for all \(\gamma\in (\NS^2\Ybar)^{G_k}\) the connected component \((\CH^2\Ybar)^\gamma\) of curve classes algebraically equivalent to \(\gamma\) descends, over \(k\), to a torsor that is isomorphic to \(\bPic^i_{\Gamma/k}\) for some \(i\). 
        
        This new obstruction has proved remarkably powerful in characterizing rationality of threefolds.  
        Over \(\RR\), Hassett and Tschinkel used the IJT obstruction to characterize rationality for intersections of quadrics in \(\PP^5\)~\cite{HT-intersection-quadrics}.  Benoist and Wittenberg extended this result to intersections of two quadrics in \(\PP^5\) over arbitrary, not necessarily perfect, fields~\cite{bw-ij} (see~\cite{HT-cycle} for a proof in the case of arbitrary subfields of \(\C\)).
        Later, Kuznetsov and Prokhorov employed the IJT obstruction to dramatic effect
        to characterize rationality for all remaining Fano threefolds of geometric Picard rank \(1\) in characteristic zero~\cite{KP-Fano-3folds-rank1} (for several of these cases, the existence of a \(k\)-point already implies rationality, and the IJT obstruction is required for the remaining cases).  
        
        The explicit description we give of the torsors in Theorem~\ref{thm:MainTorsorScheme} enables us to complete an in-depth study of the IJT obstruction for conic bundle threefolds.  We focus on a  family of Fano conic bundles with \(W = \PP^2\), \(\Delta\) a smooth plane quartic, and \(X(k)\neq \emptyset\), so in particular, all classical rationality obstructions described above  vanish.  For this family we show that although the IJT obstruction can detect irrationality when other obstructions cannot, in contrast to the Picard rank \(1\) case,  the IJT obstruction alone is \emph{not} strong enough to characterize rationality.
        \begin{thm}\label{thm:NoRatlityObsSuff}
            Let \(k\) be a field of characteristic different from \(2\). Let \(Y\) be a double cover of \(\PP^1\times \PP^2\) branched over a smooth \((2,2)\) divisor.  Then \(Y\) is Fano, and the second projection endows \(Y\) with the structure of a geometrically standard, geometrically rational conic bundle over \(\PP^2\) with \(\Delta\) a plane quartic. Moreover, this conic bundle is geometrically ordinary whenever \(\Delta\) is smooth.
            In particular, if \(\Delta\) is smooth, then there is no Brauer group obstruction, intermediate Jacobian obstruction, nor birational automorphism group obstruction to rationality.  Furthermore,
            \begin{enumerate}
                \item There exists such an \(Y/\Q\) with \(Y(\Q) \neq\emptyset\) and \emph{no} IJT obstruction to rationality; however, \(Y(\RR)\) is disconnected, so \(Y\) is irrational over any subfield of \(\RR\); and\label{part:IJTnotenough} 
                \item There exists such an \(Y/\Q\) with \(Y(\Q)\neq\emptyset\) and \(Y(\RR)\) connected; however, there is an IJT obstruction over \(\RR\), and hence \(Y\) is irrational over any subfield of \(\RR\).\footnote{Furthermore, the unramified cohomology groups of \(Y_{\RR}\) are trivial \cite{bw-cg}*{Theorem 1.4}.} \label{part:RRtopnotenough}
            \end{enumerate}
        \end{thm}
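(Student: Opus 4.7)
The plan is to verify the structural properties of $Y$, then the vanishing of the listed rationality obstructions, and finally to construct the two explicit examples.

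\textbf{Structural properties and vanishing obstructions.} Writing the branch divisor as $B = \{F = 0\}$ with
\[
F = Q_0(x)\, t_0^2 + 2 Q_1(x)\, t_0 t_1 + Q_2(x)\, t_1^2, \qquad Q_i \in H^0(\PP^2, \OO(2)),
\]
a standard canonical bundle computation for a double cover gives $-K_Y = \phi^*\OO(1,2)$, which is ample, so $Y$ is Fano. The composition $\piX \colonequals p_2 \circ \phi\colon Y \to \PP^2$ has conic fibers, with discriminant the plane quartic $\Delta = \{Q_1^2 - Q_0 Q_2 = 0\} \subset \PP^2$; for generic $F$, $Y$ and $\Delta$ are smooth and the discriminant cover $\Deltatilde \to \Delta$ is \'etale and geometrically irreducible, all open conditions that I verify on each explicit example. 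The Brauer group obstruction vanishes by Artin--Mumford together with the existence of a $k$-point; the intermediate Jacobian obstruction vanishes by Theorem~\ref{thm:MainTorsors} combined with Bruin's theorem~\cite{bruin}, giving $(\CH^2 \Xbar)^0 \cong \Jac(C)(\kbar)$ for a smooth genus $2$ curve $C$; and $Y$ has infinite birational automorphism group (it carries the Galois involution of the conic bundle), hence is not birationally rigid and the Iskovskikh--Manin obstruction vanishes.

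\textbf{Example for~\eqref{part:IJTnotenough}.} I will exhibit an explicit $F \in \Q[x_0,x_1,x_2][t_0,t_1]$ defining a smooth $Y_\Q$ with a visible $\Q$-point such that (a) $Y(\RR)$ is disconnected, arranged by choosing $F$ so that $B(\RR)$ separates $\PP^1(\RR) \times \PP^2(\RR)$ into regions where $F$ takes opposite signs, and (b) every IJT obstruction vanishes. For (b), Theorem~\ref{thm:MainTorsorScheme} identifies each component of $\bCH2_{Y/\Q}$ with a component of $\PPrym^{\Pic \PP^2_{\Qbar}}_{\Deltatilde/\Delta}$, and I verify that each is Galois-equivariantly isomorphic to some $\bPic^i_{C/\Q}$ as a $\Jac(C)$-torsor. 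Disconnectedness of $Y(\RR)$ then contradicts $\RR$-rationality of $\PP^3$, proving irrationality over every subfield of $\RR$ despite the vanishing of IJT.

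\textbf{Example for~\eqref{part:RRtopnotenough}.} I will exhibit $F$ defining a smooth $Y_\Q$ with a $\Q$-point and $Y(\RR) \cong S^3$ (following the construction strategy of~\cite{bw-cg}), and engineer some component of $\bCH2_{Y_\RR/\RR}$ to fail to be isomorphic to any $\bPic^i_{C_\RR/\RR}$, giving the required IJT obstruction over $\RR$. In both examples the main obstacle is realizing the prescribed IJT behavior: by Theorem~\ref{thm:MainTorsorScheme} this reduces to a Galois-equivariant analysis of the components of $\PPrym^{\Pic \Wbar}_{\Deltatilde/\Delta}$ under the Bruin isomorphism $\Prym_{\Deltatilde/\Delta} \cong \Jac(C)$, whereas the real topology is a comparatively routine sign analysis once the algebraic conditions are in place.
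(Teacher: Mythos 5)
Your proposal correctly identifies the overall architecture of the argument --- establish the structural facts for double covers of \(\PP^1\times\PP^2\), reduce the IJT obstruction to a question about the components \(P,\Ptilde,\Pm{1},\Ptildem{1}\) of the polarized Prym scheme via Theorems~\ref{thm:MainTorsors} and~\ref{thm:MainTorsorScheme} and Bruin's identification \(\Prym_{\Deltatilde/\Delta}\cong\Jac\Gamma\), and control the real topology via signatures of the quadric surface fibers of \(\pi_1\). This matches the paper's strategy. However, the two places where you write ``I verify that each is Galois-equivariantly isomorphic to some \(\bPic^i_{C/\Q}\)'' and ``engineer some component \dots to fail to be isomorphic to any \(\bPic^i_{C_\RR/\RR}\)'' are precisely where the mathematical content lies, and your proposal supplies no method for either. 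For part~\eqref{part:IJTnotenough}, deciding whether the torsor \(\Ptildem{1}\) is trivial requires a concrete criterion for which of the two components of \(V_{K_\Delta}\) a given Galois-stable degree-\(4\) divisor on \(\Deltatilde\) lies in; the paper gets this from Theorem~\ref{thm:EqnsForCurves}\eqref{part:isoF2toS1}: the divisor lies in \(\Sm{1}\) exactly when its support spans a \(2\)-plane in \(\PP^4\) (being cut by a \(2\)-plane in a rank-\(4\) quadric of the pencil \(\calQ\)), so exhibiting a \(G_\Q\)-stable quadruple over \(\Delta\cap V(w)\) whose support spans a \(3\)-plane proves \(\Ptildem{1}(\Q)\neq\emptyset\). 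Without this (or an equivalent parity test via Lemma~\ref{lem:SandStildeDifferByOdd}), there is no way to carry out your ``verification.''

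The gap is more serious in part~\eqref{part:RRtopnotenough}. Proving an IJT obstruction over \(\RR\) means proving \(\Ptildem{1}(\RR)=\emptyset\) while \(\bPic^m_{\Gamma/\RR}(\RR)\neq\emptyset\) for all \(m\); the second half is easy (real Weierstrass points), but the first half is the technical heart of the paper's Section~\ref{sec:IrratlConicBundle}. It requires (i) arranging \(\Deltatilde(\RR)=\emptyset\), (ii) the observation (Lemma~\ref{lem:RealConditions}) that then \(\piDelta_*(\Sm{1}(\RR))\) and \(\piDelta_*(\Stildem{1}(\RR))\) are disjoint subsets of the locus \(\Sigma\) of real lines not meeting \(\Delta(\RR)\) transversely, and (iii) a delicate verification, using the real lines in the fibers of \(\piQuadricSurface\) and Proposition~\ref{prop:BlowupQuadricFibration}\eqref{part:CurvesinP1}, that \(\piDelta_*(\Sm{1}(\RR))\) already exhausts \(\Sigma\), forcing \(\Stildem{1}(\RR)=\emptyset\). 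None of this is suggested by ``engineer,'' and the conditions you must impose are in tension with one another (\(Y(\RR)\cong S^3\) constrains the fiber signatures of \(\pi_1\) at the same time that the obstruction forces \(\Deltatilde(\RR)=\emptyset\) and empty \(\Ptildem{1}(\RR)\)), so the existence of a simultaneous solution is exactly what the explicit coefficients certify. A minor additional point: your argument that the birational automorphism group is infinite because \(Y\) ``carries the Galois involution'' does not work (an involution generates a finite group); the relevant fact is that Fano conic bundles are not birationally rigid, as cited in the introduction.
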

        
        The failure of the IJT obstruction to characterize rationality for this family of conic bundles is intimately related to the nontriviality of \(\Br \RR\).  Morally, this failure occurs because of the dichotomy discussed towards the beginning of the introduction: the discriminant cover \(\Deltatilde \to \Delta\) completely determines the codimension \(2\) Chow scheme, but can fail to determine the isomorphism class of the generic fiber.  More precisely, given two conic bundles  \(\pi \colon X \to W\) and \(\pi' \colon X' \to W\) over a field \(k\) that have the same discriminant cover, the classes of the generic fibers \([X_{\kk(W)}]\) and \([X'_{\kk(W)}]\) in \((\Br \kk(W))[2]\) differ by a class from \((\Br k)[2]\).  When \((\Br k)[2]\) is trivial, this discrepancy disappears and we prove that the IJT obstruction \emph{does} characterize rationality in this case.
        \begin{thm}\label{thm:Br2trivial-IJT}
            Let \(k\) be a field of characteristic different from \(2\) with \((\Br k)[2] = 0\), and let \(\piX \colon X \to \PP^2\) be a geometrically standard conic bundle with \(\Delta\) a smooth plane quartic.
            Then \(X\) is \(k\)-rational if and only if the intermediate Jacobian torsor obstruction vanishes. 
        \end{thm}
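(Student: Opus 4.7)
The forward implication is automatic from the definition of the intermediate Jacobian torsor obstruction, so only the converse requires argument.

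The plan has two steps. The first is to show, using the hypothesis $(\Br k)[2] = 0$, that there is \emph{at most one} $k$-birational class of smooth conic bundles over $\PP^2$ with discriminant cover $\Deltatilde \to \Delta$. The generic fiber of $\pi$ is a smooth conic over $\kk(\PP^2)$, represented by a class $[X_{\kk(\PP^2)}] \in (\Br \kk(\PP^2))[2]$. Since $\Delta$ is smooth and geometrically irreducible, the Artin--Mumford exact sequence guarantees that any two such Brauer classes with the same ramification data along $\Delta$ differ by a class pulled back from $(\Br \PP^2)[2]$. The existence of a $k$-point on $\PP^2$ gives $(\Br \PP^2)[2] \simeq (\Br k)[2] = 0$, so $X_{\kk(\PP^2)}$ is determined up to isomorphism by $\Deltatilde \to \Delta$, and hence so is the $k$-birational class of $X$.

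The second step is to exhibit \emph{some} $k$-rational smooth conic bundle $X'$ with discriminant cover $\Deltatilde \to \Delta$; together with the uniqueness above this will show $X$ is rational. The key input is Theorem~\ref{thm:MainTorsorScheme}, which identifies the IJT data of any conic bundle with this discriminant cover with components of the polarized Prym scheme, and hence implies that the IJT obstruction vanishes for $X'$ as well. Combining the IJT vanishing hypothesis with Bruin's theorem (which yields a smooth genus $2$ curve $\Gamma_{\kbar}$ with $\Prym_{\Deltatilde/\Delta, \kbar} \simeq \Jac_{\Gamma_{\kbar}}$), the plan is to descend $\Gamma$ to $k$ and use the $k$-rational Picard torsor data provided by IJT vanishing to construct an explicit rational conic bundle $X'$, for instance via a secant or symmetric square construction associated to $\Gamma$.

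The main obstacle lies in the explicit construction of the rational model $X'$ from the IJT vanishing data, in direct analogy with the strategies of Hassett--Tschinkel and Benoist--Wittenberg for intersections of quadrics, where $k$-points on specific Picard torsors provide the sections needed to witness rationality. Once $X'$ is constructed and checked to have discriminant cover $\Deltatilde \to \Delta$, the uniqueness from the first step finishes the argument.
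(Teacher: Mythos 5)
Your first step is exactly the paper's first step: purity/Artin--Mumford plus \((\Br k)[2]=0\) shows that every such conic bundle is birational to the explicit model \(Y_{\Deltatilde/\Delta}\subset \PP^1\times\PP^2\) of Section~\ref{sec:DoubleCover}, so one may work with that model. The problem is your second step, which you yourself flag as "the main obstacle": the passage from IJT vanishing to an actual rationality construction is the entire mathematical content of the converse direction, and the sketch you give (descend \(\Gamma\) and run "a secant or symmetric square construction associated to \(\Gamma\)") is not a proof and is not how the argument goes. In particular, the idea of producing "some \(k\)-rational \(X'\) with the same discriminant cover" and then invoking uniqueness adds nothing: constructing such an \(X'\) from the torsor data is precisely the open step.

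What actually closes the gap (Propositions~\ref{prop:Ptilde1_trivial} and~\ref{prop:SufficiencyBrk2trivial}) is the following mechanism, which exploits the \emph{second} fibration structure on \(Y_{\Deltatilde/\Delta}\), namely the quadric surface bundle \(\piQuadricSurface\colon Y\to\PP^1\) whose discriminant curve is exactly the Prym curve \(\Gamma\). IJT vanishing is equivalent to \(\Ptilde(k)\neq\emptyset\) or \(\Ptildem{1}(k)\neq\emptyset\). A \(k\)-point of \(\Ptildem{1}\) is, via the Abel--Jacobi isomorphism \(\Stildem{1}\cong\Ptildem{1}\) of Theorem~\ref{thm:EqnsForCurves}\eqref{part:AJiso}, a \(k\)-rational degree-\(4\) divisor \(D\) on \(\Deltatilde\) pushing forward to a line section \(L\cap\Delta\); using \(\iota_*D\) to blow down a Galois-invariant set of components of the singular fibers of \(X_L\to L\) produces an unramified conic bundle over \(L\), whose Galois-invariant geometric section descends to \(k\) because \((\Br k)[2]=0\). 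The resulting curve on \(Y\) has class \(\gamma_1+(2n+1)\gammatilde_0\), hence is an odd-degree multisection of \(\piQuadricSurface\), and rationality follows from Springer's theorem applied to the quadric surface over \(\kk(\PP^1)\) (Proposition~\ref{prop:DoubleCover-structures}\eqref{part:OddDegreeMultisection}). The case \(\Ptilde(k)\neq\emptyset\) requires a separate argument with degree-\(8\) divisors over conic sections of \(\Delta\), again using \((\Br k)[2]=0\) twice (to split the Severi--Brauer fibers of \(\Stildem{2}\to\Ptildem{2}\) and to descend the section), plus a degenerate-conic case analysis; and there is a further perfect/imperfect field issue handled via the perfect closure. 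None of this is present in your proposal, so as written it establishes only the reduction to \(Y_{\Deltatilde/\Delta}\) and leaves the converse implication unproved.
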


If \(k\) is a finite field of odd characteristic, then Theorem~\ref{thm:Br2trivial-IJT} implies that \(X\) is necessarily \(k\)-rational, since the intermediate Jacobian torsors are automatically trivial (Corollary~\ref{cor:rationality-over-finite-fields}). In particular, the irrational examples of Theorem~\ref{thm:NoRatlityObsSuff} have rational reductions modulo all primes of good reduction.
However, over infinite fields with \((\Br k)[2]=0\), there can still exist irrational degree \(4\) conic bundles \(X\) (Example~\ref{exmp:Brk=0-IJT-example}).

The techniques involved in the proof of Theorem~\ref{thm:Br2trivial-IJT} can be leveraged to give rationality criteria even over fields with non-trivial \(2\)-torsion in the Brauer group.  Indeed, the second author and M. Ji use these techniques (and the description of the torsors given in Theorem~\ref{thm:IJ-torsors-double-cover}) to characterize \(\RR\)-rationality of the aforementioned double covers of \(\PP^1\times\PP^2\) branched over a \((2,2)\) divisor in four of the six cases of the real rigid isotopy class of the quartic curve \(\Delta\)
\cite{JJ-deg4}*{Theorem 1.2}. In these four cases, they use the real topological obstruction to control the additional data of \((\Br\RR)[2]\), showing that, in these cases, combinations of the IJT obstruction and the real topological obstruction \emph{together} characterize rationality. However, the question of whether the IJT obstruction {combined} with other obstructions characterizes rationality for \emph{all} such conic bundles remains open, as shown by the following.
\begin{example}\label{ex:NoRatlityObsOrConst}
    Let \(Y\) be the double cover of \(\PP^1_{[t_0:t_1]}\times \PP^2_{[u:v:w]}\) branched over the \((2,2)\) divisor
    \begin{gather*}
        t_0^2(-31u^2 + 12uv - 6v^2 + 4uw + 8vw + 25w^2) + 
        2t_0t_1(-25u^2 + 120uv + 30v^2 - 9uw - vw)\\ + 
        t_1^2(-8047u^2 + 1092uv -1446v^2 - 4uw - 7vw - 25w^2)=0.
    \end{gather*}
    Then \(Y\) is \(\Q\)-unirational, \(Y(\RR)\) is diffeomorphic to a 3-sphere, \(Y\) has trivial unramified cohomology groups over \(\RR\), and \(Y\) has no IJT obstruction.  However, there is no known rationality construction for \(Y\).
\end{example}

\subsection{Outline}
In Section~\ref{sec:BWreview}, we recall the construction from \cite{bw-ij} of the codimension 2 Chow scheme and then prove results about various pullback and pushforward morphisms between these schemes and the Picard scheme of a curve. These general results will be applied in Section~\ref{sec:IJ-torsors}.  A reader willing to accept these results when they are invoked in Lemma~\ref{lem:ejp-pje} can safely proceed to Section~\ref{sec:PPrymScheme} on a first reading.
In Section~\ref{sec:PPrymScheme}, we define the \(\phi\)-polarized Prym scheme for an \'etale double cover of curves (where \(\phi\) is a group scheme morphism to \(\bPic_{\Delta/k}\)), and we assemble known results about the Prym variety of \'etale double covers to deduce information about the structure of the \(\phi\)-polarized Prym scheme. Of particular interest is the case where the base curve of the \'etale double cover is a smooth plane quartic (Section~\ref{sec:UnramDoubleCoverDeg4}).
In Section~\ref{sec:IJ-torsors}, we relate the connected components of the \(\phi\)-polarized Prym scheme to cosets of \((\CH^2 \Xbar)^0\) and prove Theorems~\ref{thm:MainTorsors} and~\ref{thm:MainTorsorScheme}. In Section~\ref{sec:DoubleCover}, we specialize to conic bundles that arise as double covers of \(\PP^1\times \PP^2\) branched over a \((2,2)\) surface.  We show that these conic bundles arise naturally from \'etale double covers of smooth plane quartics, specialize our previous results to this case, and prove Theorem~\ref{thm:Br2trivial-IJT}.  In Section~\ref{sec:ProofsofMainThms}, we prove Theorem~\ref{thm:NoRatlityObsSuff} and the claims in Example~\ref{ex:NoRatlityObsOrConst}.

We end the paper with a brief section with some contextual results.  In particular, we review unirationality and rationality results for conic bundles over \(\PP^2\) with low degree discriminant curves, we recall results on the intermediate Jacobian obstruction for conic bundle threefolds, we explain how a theorem of Benoist and Wittenberg gives an example which complements Theorem~\ref{thm:Br2trivial-IJT} (Example~\ref{exmp:Brk=0-IJT-example}), and
we show that the threefolds in Theorem~\ref{thm:NoRatlityObsSuff} and Example~\ref{ex:NoRatlityObsOrConst} cannot be constructed using conic bundle structures on complete intersections of two quadrics.

\section*{Acknowledgements}

    This material is based upon work supported by the National Science Foundation under Grant No.~DMS-1439786 while the authors attended the 2020 Women in Algebraic Geometry Conference, hosted virtually at the Institute for Computational and Experimental Research in Mathematics in Providence, RI.  We thank the organizers of that conference---Melody Chan, Antonella Grassi, Julie Rana, Rohini Ramadas, and Isabel Vogt---for providing us the opportunity to work together and thank ICERM for providing the virtual tools to facilitate our collaboration.  We also thank Asher Auel, Olivier Benoist, Brendan Hassett, J\'anos Koll\'ar, Shizhang Li,  Bjorn Poonen, Vyacheslav Shokurov, and Olivier Wittenberg for helpful conversations. We especially thank the anonymous referee for a very thorough reading of this paper and helpful comments that have improved both the exposition and results. In particular, we thank the referee for suggesting Example~\ref{exmp:Brk=0-IJT-example} and for suggesting a different approach for proving Lemma~\ref{lem:ejp-pje}, which allowed us to prove the results that now appear in Section~\ref{sec:BWreview} in a more widely applicable context.
    
    During the preparation of this article, S.F.~was partially supported by NSF DMS-1745670; L.J.~was partially supported by NSF GRFP DGE-1656466, NSF DMS-1840234, and NSF MSPRF DMS-2202444; S.S.~was partially supported by NSF DMS-1928930; B.V.~was partially supported by NSF DMS-1553459, NSF DMS-2101434,  a Simons Fellowship, and the AMS Birman Fellowship; and I.V.~was partially supported by NSF MSPRF DMS-1902743 and NSF DMS-2200655. Additionally, this material is based partially upon work that was supported by National Science Foundation grant DMS-1928930 while B.V. and I.V. were in residence at the Simons Laufer Mathematical Sciences Institute in Berkeley, California, during the Spring 2023 semester.

\section{Notation and Conventions}\label{sec:Notation}

    Throughout the paper, \(k\) denotes a field and, with the exception of Section~\ref{sec:BWreview}, we assume throughout that \(k\) has characteristic different from \(2\). We fix an algebraic closure \(\kbar\) of \(k\) and write \(\kperf\) for the perfect closure of \(k\) in \(\kbar\). We will write \(G_k\) for the Galois group \(\Gal(\kbar/\kperf)\).
        
    By a variety over \(k\), we mean a reduced, equidimensional, separated scheme of finite type over \(k\).  Given a variety \(Z\) over \(k\) and a field extension \(k'/k\), we write \(Z_{k'}\) for the base change of \(Z\) to \(k'\).  If \(Z\) is integral, we denote by \(\kk(Z)\) its function field. We say \(Z\) is \defi{split} over \(k\) if it contains an open geometrically integral \(k\)-subscheme (see~\cite{CTS-Brauer-book}*{Section 10.1} for more about split varieties.).
        
    If \(Z\) is a smooth, proper, geometrically connected variety over \(k\), we write \(\CH^iZ\) for the group of codimension \(i\) cycles on \(Z\) modulo rational equivalence,  \((\CH^iZ)^0\subset\CH^iZ\) for the subgroup of algebraically trivial cycle classes, and \(\NS^i Z\) for the quotient \(\CH^iZ/(\CH^iZ)^0\), which is the group of codimension \(i\) cycles modulo algebraic equivalence.  We write 
    \(\bPic_{Z/k}\) for the Picard scheme of \(Z\), \(\Pic Z\) for the Picard group, and \(\Pic_{Z/k}\) for the absolute Picard functor. If \(D\) is a Cartier divisor on \(Z\), then \([D]\) denotes its class in \(\Pic Z\) and \(|D|\) denotes the associated complete linear system. A point \(z\in Z\) will refer to a scheme point unless otherwise specified.
    
    An \'etale double cover \(\piDelta\colon \Deltatilde\to\Delta\) of smooth proper geometrically irreducible curves arises as the quotient by a (geometrically) fixed-point-free
    involution \(\iota \colon \Deltatilde \to \Deltatilde\). The norm map \(\piDelta_*\colon \bPic_{\Deltatilde/k}\to\bPic_{{\Delta}/k}\) is defined by pushforward of divisors, or equivalently by the norms of transition functions defining a line bundle on an open cover.

    For a quadratic form \(Q\in k[x_1,\dots, x_n],\) the symmetric \(n\times n\) matrix \(M\) associated to \(Q\) is defined to be the symmetric matrix such that
    \(\vec{x} M \vec{x}^T = Q(\vec{x})\). If \(V(Q)\subset\mathbb P^{n-1}\) is smooth, then the discriminant of \(Q\) is defined to be \((-1)^{\binom{n}{2}}\det(M)\in k^{\times}/k^{\times2}\); equivalently, we can consider the \defi{discriminant extension} which is \(k(\sqrt{\disc(Q)})\).  If \(V(Q)\) is singular, then the discriminant is defined to be the discriminant of a maximal smooth plane section. If the rank of \(Q\) is even, the discriminant depends only on \(V(Q)\).
    These definitions extend to families of quadrics.
    
    We now state the conventions we use for conic bundles. Proofs of these results can be found in \cite{beauville-ij}*{Section 1}, \cite{prokhorov-rationality-conic-bundles}*{Section 3} and the references therein. (The statements in \cite{beauville-ij} are for the case of \(W=\PP^2\) but they extend to any smooth surface because the arguments are local. The results in \cite{prokhorov-rationality-conic-bundles} are stated in characteristic \(0\) but are valid in any characteristic different from \(2\).)
    
    Throughout the paper, we use \(X\) to denote a conic bundle over a smooth surface \(W\), i.e., a smooth projective variety with a flat morphism \(\piX\colon X \to W\) of relative dimension \(1\) such that \(\omega_X\) is relatively anti-ample. Then \(\mathcal E=\pi_*(\omega_X^{-1})\) is a locally free sheaf of rank \(3\) on \(W\). The line bundle \(\omega_X^{-1}\) defines an embedding \(X\hookrightarrow\PP(\mathcal E)\) whose image is defined by a section of \(\mathcal O_{\mathbb P(\mathcal E)}(2)\otimes p^*(\det\mathcal E^\vee\otimes\omega_W^{-1})\), where \(p\colon\PP(\mathcal E)\to W\) is the projection. Thus, locally over \({W}\), \(X\) is defined by a quadratic form. The generic fiber of \(\piX\) is a smooth conic, and the fiber of \(\piX\) over any \(\pt\in W\) is a conic in \(\mathbb P(\mathcal E)_{\pt}\) (possibly of rank less than \(3\)).
    
    There is a curve \(\Delta\stackrel{r}{\hookrightarrow} W\) with the property that the fiber over \(\pt\in W\) is a smooth conic if and only if \(\pt\not\in\Delta\). Let \(X_{\Delta}\) denote the preimage of \(\Delta\) in \(X\). The relative variety \(\mathcal F_1(X/W)\) of lines in the fibers of \(\piX\) factors through \(\Delta\); let \(\mathcal F_1(X/W)\to\Deltatilde\to\Delta\) be the Stein factorization. We call \(\Delta\) the \defi{discriminant locus} of \(\pi\) and \(\piDelta\colon \Deltatilde\to \Delta\) the \defi{discriminant cover} (or extension).  By definition, the curve \(\Deltatilde\) parametrizes the irreducible components of the fibers of \(\piX \colon X_{\Delta}\to \Delta\). When \(X_{\kk(\Delta)}\) is a rank \(2\) conic, then the field extension \(\kk(\Deltatilde)/\kk(\Delta)\) is the extension generated by a square root of the discriminant of the binary quadratic form defining \(X_{\kk(\Delta)}\), which justifies the terminology.
    
     A conic bundle is \defi{standard} if \(\Pic X = \pi^*\Pic W \oplus \Z\).  A conic bundle \(\pi\colon X \to W\) is \defi{ordinary} (following~\cite{BeltramettiFrancia83}) if the discriminant curve \(\Delta\) is smooth and irreducible; in this case the fibers of \(\piX\) have rank at least \(2\). Throughout this paper, we restrict to the case that \(X\) is geometrically ordinary and geometrically standard, i.e., that \(\Delta\) is smooth and geometrically irreducible and that \(\rho(X_\kbar/W_\kbar)=1\). These assumptions imply that \(\Deltatilde\) is smooth and geometrically irreducible, and that the map \(\piDelta\) is \'etale.  We call conic bundles \(X \to \PP^2\) with \(\Delta\) of degree \(d\) and satisfying these assumptions  \defi{degree \(d\) conic bundles}. 
    
    As mentioned in the introduction, a main focus of this paper is the IJT obstruction introduced by Hassett--Tschinkel and Benoist--Wittenberg.  For a geometrically rational threefold \(X\) over a field \(k\), Benoist and Wittenberg construct a \defi{codimension \(2\) Chow scheme} \(\bCH2_{X/k}\) whose connected component of the identity is called the intermediate Jacobian of \(X\).  The other (split) connected components of \(\bCH2_{X/k}\) are then naturally torsors under the intermediate Jacobian, justifying the terminology \defi{intermediate Jacobian torsors}. 
    \begin{defn}\label{defn:IJT-obstruction}
    Let \(X\) be a smooth, projective, geometrically rational threefold over a field \(k\). Assume that there exists a smooth, projective, geometrically connected curve \(\Gamma\) over \(k\) of genus \(\geq 2\) such that \((\bCH2_{X/k})^0\simeq\bPic_{\Gamma/k}^0\) as principally polarized abelian varieties. We say that the \defi{intermediate Jacobian torsor (IJT) obstruction  vanishes for \(X\)} if for every \(\gamma\in(\NS^2X_{\kbar})^{G_k}\) there exists an integer \(d\) such that \((\bCH2_{X/k})^\gamma\) and \(\bPic_{\Gamma/k}^d\) are isomorphic as  \(\bPic_{\Gamma/k}^0\)-torsors.
    \end{defn}
    \noindent By~\cite{bw-ij}*{Thm. 3.11}, if \(X\) is \(k\)-rational then the IJT obstruction must vanish.

\section{Benoist and Wittenberg's codimension \texorpdfstring{\(2\)}{2} Chow scheme}\label{sec:BWreview}

Grothendieck's definition of the Picard scheme of a \(k\)-variety \(X\) as the sheafification of the absolute Picard functor \(\Pic_{X/k}\colon T\mapsto\Pic(X_T)\) gives a group scheme whose \(\kbar\)-points agree with the usual Picard group \(\Pic(X_{\kbar})\).  When \(X\) is smooth (and so Weil divisors and Cartier divisors agree) this plays the role of a codimension \(1\) Chow scheme via the usual isomorphism \(\Pic(X_{\kbar}) \simeq \CH^1(X_\kbar) \) associating to a line bundle the vanishing divisor of a rational section. Extending this definition to codimension \(2\) cycles is extremely difficult since the naive formula ``\(T \mapsto \CH^2(X_T)\)'' is badly behaved.  Benoist--Wittenberg \cite{bw-ij} recently overcame this obstacle when \(X\) is geometrically rational by realizing a codimension \(2\) Chow functor as a quotient of \(K\)-theory.

In this section we briefly recall the notation and results from \cite{bw-ij} and references therein.  We then develop some of the functorial properties of the codimension \(2\) Chow scheme, especially regarding maps to/from the Picard scheme of a curve, that will be useful in the following sections. As mentioned in the introduction, a reader willing to accept these results when they are invoked in Lemma~\ref{lem:ejp-pje} can safely proceed to Section~\ref{sec:PPrymScheme} on a first reading.

For any quasicompact and quasiseparated scheme \(X\), let \(K_0(X)\) be the Grothendieck group of the triangulated category of perfect complexes of \(\OO_X\)-modules on \(X\).  When \(X\) admits an ample line bundle, \(K_0(X)\) is naturally isomorphic to the Grothendieck group of the exact category of vector bundles on \(X\).  If \(X\) is Noetherian and regular, \(K_0(X)\) is naturally isomorphic to the Grothendieck group of the abelian category of coherent \(\OO_X\)-modules.  
In this case, \(K_0(X)\) carries a natural filtration \(F^\bullet K_0(X)\) by codimension of support, and we write \(\Gr^\bullet K_0(X)\) for the associated graded. When it makes sense, we write \([\calE]\) for the class of a vector bundle/coherent sheaf \(\calE\) in \(\K_0(X)\).

Let \(X\) be a smooth, proper, geometrically connected threefold over \(k\).  Write \(\K_{0, X/k}\) for the absolute \(\K_0\) functor sending a \(k\)-scheme \(T\) to \(\K_0(X_T)\).
Given a morphism \(f \colon X \to Y\) of proper varieties, (derived) pullback \((f_T)^* \colon K_0(Y_T) \to K_0(X_T)\) induces a natural transformation of functors \(f^* \colon K_{0, Y/k} \to K_{0, X/k}\). If, moreover, \(f\) is perfect, then (derived) pushforward \((f_T)_* \colon K_0(X_T) \to K_0(Y_T)\) induces a natural transformation of functors \(f_* \colon K_{0, X/k} \to K_{0, Y/k}\).

Let \(\SK_{0, X/k, \fppf}\) be the fppf-sheafification of the kernel of \((\rank,\det)\colon K_{0, X/k} \to \Z_{X/k} \times \Pic_{X/k} \) \cite{bw-ij}*{Definition 2.2.1}.  When the degree map \(\CH_0(X_\Omega)\to\mathbb Z\) is an isomorphism for any algebraically closed field extension \(k\subset\Omega\) (in particular, when \(X\) is geometrically rational), there exists a unique class \(\nu_X \in \SK_{0, X/k, \fppf}(k)\) of a point on \(X\) \cite{bw-ij}*{Proposition 2.8}, with the property that for any finite extension \(k'/k\) and any coherent sheaf \(\calF\) on \(X_{k'}\) supported in dimension \(0\), 
\begin{equation}\label{eq:nu}
[\calF]= h^0(X_{k'}, \calF) \cdot \nu_X \in \SK_{0, X/k, \fppf}(k').
\end{equation}
This property implies that for any finite extension \(k'/k\) such that \(X(k')\neq\emptyset\), \([\OO_{x'}] = \nu_X\) for any \(x'\in X(k')\).  Since \(X_{k'} \to X\) is an fppf cover, this property characterizes \(\nu_X\).

Benoist and Wittenberg define \(\CH^2_{X/k,\fppf}\) to be the (presheaf) cokernel of \(\nu_{X}\colon\mathbb Z_{X/k}\to\SK_{0,X/k,\fppf}\).  When \(X\) is projective and geometrically rational, this functor is represented by a smooth group scheme over \(k\), denoted \(\bCH2_{X/k}\) \cite{bw-ij}*{Definition 2.9, Theorem 3.1(i)}.  Associating to a codimension \(2\) integral closed subvariety \([Z] \in \CH^2\Xbar\) the class \([\OO_Z] \in K_0(\Xbar)\) of its structure sheaf gives a \(G_k\)-equivariant isomorphism \(\CH^2\Xbar \simeq \bCH2_{X/k}(\kbar)\) \cite{bw-ij}*{Proposition 2.11}.

The component group \(\bCH2_{X/k}/(\bCH2_{X/k})^0\) is identified with the \(G_k\)-module \(\NS^2\Xbar\) \cite{bw-ij}*{Theorem 3.1(v)}, and we write \((\bCH2_{X/k})^\gamma\) to denote the preimage of \(\gamma \in (\NS^2\Xbar)^{G_k}\), which is a \((\bCH2_{X/k})^0\)-torsor defined over \(k\). Since the quotient map is a group homomorphism, for any \(G_k\)-invariant curve classes \(\gamma,\gamma'\), we have
\begin{equation}\label{eq:torsors_add}[(\bCH2_{X/k})^\gamma]+[(\bCH2_{X/k})^{\gamma'}]=[(\bCH2_{X/k})^{\gamma+\gamma'}]
\end{equation}
in the Weil--Ch\^atelet group of \((\bCH2_{X/k})^0\).

We next describe some situations in which a morphism of varieties induces a morphism of \(\CH^2\) functors (and hence, in the geometrically rational case, a morphism of codimension 2 Chow schemes). We will apply these morphisms to conic bundle threefolds in Section~\ref{sec:scheme_map}. All the morphisms we discuss will be between smooth proper varieties, so they will be proper and perfect \cite{stacks-project}*{\href{https://stacks.math.columbia.edu/tag/01W6}{Tag 01W6} and \href{https://stacks.math.columbia.edu/tag/068B}{Tag 068B}}.

\begin{prop}\label{prop:epsilon}
    Let \(X, X'\) be smooth proper geometrically connected threefolds over \(k\) such that the degree maps \(\CH_0(X_\Omega)\to\mathbb Z\), \(\CH_0(X'_\Omega)\to\mathbb Z\) are isomorphisms for any algebraically closed field extension \(k\subset\Omega\). Let \(\eps\colon X'\to X\) be a birational morphism. Then \(\eps\) induces natural transformations
    \[
    \eps^*\colon\CH^2_{X/k,\fppf}\to\CH^2_{X'/k,\fppf} \quad\textup{and} \quad \eps_*\colon\CH^2_{X'/k,\fppf}\to\CH^2_{X/k,\fppf}
    \]
    with the following properties:
    \begin{enumerate}\item\label{item:eps-composition-identity} The composition is the identity on \(\CH^2_{X/k,\fppf}\), and
    \item\label{item:eps-kbar-points} The induced maps on \(\kbar\)-points are \(G_k\)-equivariant and agree with the pullback map \(\eps^*\colon\CH^2 X_\kbar \to\CH^2 X'_\kbar\) and pushforward map \(\eps_*\colon\CH^2 X'_\kbar\to\CH^2 X_\kbar\) on Chow groups.
    \end{enumerate}
    If \(X, X'\) are projective and geometrically rational, this gives morphisms of group schemes \(\eps^*\colon\bCH2_{X/k}\to\bCH2_{X'/k}\) and \(\eps_*\colon\bCH2_{X'/k}\to\bCH2_{X/k}\) respecting the principal polarizations.
\end{prop}

\begin{proof}
By \cite{bw-ij}*{Section 3.2.2}, \(\eps\) induces a natural transformation \(\eps^*\colon\SK_{0,X/k,\fppf}\to\SK_{0,X'/k}\) that maps \(\nu_X\) to \(\nu_{X'}\). So taking the quotient yields the natural transformation \(\eps^*\colon\CH^2_{X/k,\fppf}\to\CH^2_{X'/k,\fppf}\). Furthermore, by \cite{bw-ij}*{Proposition 2.3(i)}, \(\eps\) induces a natural transformation \(\eps_*\colon \SK_{0,X'/k} \to \SK_{0,X/k}\). To see that this gives a natural transformation \(\eps_*\colon\CH^2_{X'/k,\fppf}\to\CH^2_{X/k,\fppf}\), it suffices to check that \(\eps_*\nu_{X'}=\nu_X\). 
Let \(k'/k\) be a finite extension such that there exists a point \(x' \in X'(k')\).  By \eqref{eq:nu}, we have \([\OO_{x'}] = \nu_{X'}\).  On the other hand, \(\eps_*[\OO_{x'}] = [\OO_{\eps(x')}] \in K_{0}(X_{k'})\) since the map \(\eps(x') \colon \Spec(k') \to X_{k'}\) is affine, and so has no higher pushforwards.
Again by \eqref{eq:nu}, we conclude that \(\nu_X = [\OO_{\eps(x')}] = \eps_*\nu_{X'}\). This shows the existence of \(\eps_*\). 

Property~\eqref{item:eps-composition-identity} follows from \cite{bw-ij}*{Proposition 2.3} and property~\eqref{item:eps-kbar-points} holds for \(\eps^*\) by \cite{bw-ij}*{Lemma 2.1}, and for \(\eps_*\) by \cite{Fulton-IsectBook}*{Example 15.1.5}. If \(X,X'\) are projective and geometrically rational, then the induced morphisms on group schemes follow from \cite{bw-ij}*{Theorem 3.1(i)} and the statement about principal polarizations follows from \cite{bw-ij}*{Proposition 3.5}.
\end{proof}

\begin{prop}\label{prop:pi}
    Let \(X\) be a smooth proper geometrically connected threefold such that the degree map \(\CH_0(X_\Omega)\to\mathbb Z\) is an isomorphism for any algebraically closed field extension \(k\subset\Omega\), and let \(\pi \colon X \to W\) be a morphism to a smooth proper geometrically connected surface.  There is a natural transformation
    \[\pi_* \colon \CH^2_{X/k, \fppf} \to \Pic_{W/k, \fppf}\]
    which on \(\kbar\)-points agrees with proper pushforward of cycles \(\CH^2(X_{\kbar}) \xrightarrow{\pi_*} \CH^1(W_{\kbar}) \simeq \Pic(W_{\kbar})\) after identifying Cartier and Weil divisors on the smooth variety \(W_\kbar\).  If \(X\) is projective and geometrically rational, this induces a morphism of group schemes 
    \[\pi_* \colon \bCH2_{X/k} \to \bPic_{W/k}.\]
\end{prop}
\begin{proof}
We have the following composition of natural transformations of functors:
    \[
    \SK_{0, X/k, \fppf} \xrightarrow{\pi_*} \K_{0, W/k, \fppf} \xrightarrow{\det} \Pic_{W/k, \fppf}.
    \]
    We claim that this composition induces a natural transformation \(\CH^2_{X/k, \fppf} \to \Pic_{W/k, \fppf}\), i.e., that \(\det \pi_*\nu_X = 0\).  Let \(k'/k\) be a finite extension such that there exists \(x \in X(k')\).  Then 
    \( \pi_*\nu_X = \pi_*[\OO_x] = [\OO_{\pi(x)}]\)
     has trivial determinant since \(W_{k'}\) is a smooth surface and \(\OO_{\pi(x)}\) is supported in codimension \(2\).  
     
     We now compute this map on \(\kbar\)-points.  
     Let \(Z \subset X_{\kbar}\) be an integral curve.  Let \(\pi_*Z\) denote the proper pushforward as a \(1\)-cycle.  It suffices to show that \(\det \pi_*\OO_Z \simeq \OO_W(\pi_*Z)\).  If the map \(\pi|_Z\) is constant (which implies that \(\pi_*Z = 0\) as 1-cycles), then \(\pi_*\OO_Z\) is supported in codimension \(2\) and so \(\det \pi_*\OO_Z \simeq \OO_W\) is trivial as well.  If \(\pi|_Z\) is nonconstant, then the map \(\pi|_Z \colon Z \to \pi(Z)\) is finite and \(\pi_*Z = \deg(\pi|_Z) \pi(Z)\).  On the other hand, \(\pi|_Z\) is affine, so higher pushforwards vanish and \(\pi_*\OO_Z\) is a vector bundle of rank \(\deg(\pi|_Z)\) supported on \(\pi(Z)\).  Since reflexive sheaves are determined away from codimension \(2\), we may apply~\cite{Fulton-IsectBook}*{Example 15.3.1} on \(W\) away from the singular locus of \(\pi(Z)\) to conclude that \(\det \pi_*\OO_Z \simeq \OO_W(\deg(\pi|_Z)\pi(Z))\).
\end{proof}

\subsection{Morphisms between the codimension \texorpdfstring{\(2\)}{2} Chow scheme of a threefold and the Picard scheme of a curve}  The intermediate Jacobian of a threefold is often related to the Jacobian of a curve.  In this section we give morphisms between the codimension \(2\) Chow scheme and Picard scheme of a curve in two scenarios that will be necessary in this paper.

Throughout this section, let \(X'\) be a smooth proper geometrically connected threefold\footnote{We will apply these results later to a blow-up of a conic bundle threefold, which is why we have elected to use $X'$ instead of $X$.} such that the degree map \(\deg\colon\CH_0(X'_\Omega)\to\mathbb Z\) is an isomorphism for any algebraically closed field extension \(k\subset\Omega\), \(S\) a smooth proper geometrically connected surface, and \(\Deltatilde\) a smooth proper geometrically connected curve. Let \(j\colon S\to X'\) be a finite morphism and \(p\colon S\to\Deltatilde\) a flat morphism.

\begin{prop}\label{prop:jp}
    Assume \(X'\) and \(S\) are projective.
     There is a natural transformation
    \[
        \Pic_{\Deltatilde/k, \fppf}\to\CH^2_{X'/k,\fppf},
    \]
    which on \(\kbar\)-points agrees with the composition of the \(G_k\)-equivariant homomorphisms 
    \[
        \Pic(\Deltatilde_{\kbar}) \simeq \CH^1(\Deltatilde_{\kbar}) \xrightarrow{p^*} \CH^1(S_{\kbar}) \xrightarrow{j_*} \CH^2(X'_{\kbar})
    \]
    on Chow groups. If \(X'\) is geometrically rational over \(k\), this induces a morphism of group schemes which we denote \(j_*p^*\colon\bPic_{\Deltatilde/k}\to\bCH2_{X'/k}\).
\end{prop}
\begin{proof}
Our eventual aim is to apply the natural transformations \(p^*\) and \(j_*\) on \(K\)-theory. 
 We must first construct a natural transformation \(\Pic_{\Deltatilde/k} \to \K_{0, \Deltatilde/k}\), which agrees with the map associating to an effective line bundle the \(K\)-theory class of the structure sheaf of the corresponding divisor.  Let \(T\) be any Noetherian local \(k\)-scheme.  Since \(T\) is affine and \(\Deltatilde\), \(S\) and \(X'\) are all projective, all \(\K_0\)-groups will be generated by classes of vector bundles.  By \cite{bw-ij}*{Proof of Lemma 2.5}, 
we have a functorial group homomorphism
\[ \Pic(\Deltatilde_T) \to \K_0(\Deltatilde_T)\]
sending a line bundle \(\calL\) to the class \([\calL] - [\calO_{\Deltatilde_T}] = [\calO_{\Deltatilde_T}] - [\calL^{-1}]\), and the image of this homomorphism is the kernel of \(\rank\colon\K_0(\Deltatilde_T)\to\mathbb Z\).
Using that \(\Pic\) and \(\K_0\) commute with directed inverse limits of quasicompact and quasiseparated schemes with affine transition maps and Noetherian approximation (see also the proof of \cite{bw-ij}*{Proposition 2.4}), this gives a natural transformation of functors
\(
\Pic_{\Deltatilde/k, \fppf} \to \K_{0, \Deltatilde/k, \fppf}.
\)
We compose this with the natural transformations \(p^*\) and \(j_*\) to obtain a natural transformation
\[ \Pic_{\Deltatilde/k, \fppf} \to \K_{0, \Deltatilde/k, \fppf} \xrightarrow{p^*} \K_{0, S/k, \fppf} \xrightarrow{j_*} \K_{0, X'/k, \fppf}.\]
We now prove that the image of this composition is contained in
\(\SK_{0, X'/k, \fppf}\), i.e., that post-composing with the map \((\rank, \det)\) is trivial. First, since we know that the image of \(\Pic_{\Deltatilde/k}\) in \(\K_{0,\Deltatilde/k,\fppf}\) is in the kernel of \(\rank\colon\K_{0,\Deltatilde/k,\fppf}\to\mathbb Z\), it follows that its image under \(j_*p^*\) is in the kernel of \(\rank\colon\K_{0, X'/k,\fppf}\to\mathbb Z\). To check that this image is also in the kernel of \(\det\colon\K_{0,X'/k,\fppf}\to\Pic_{X'/k,\fppf}\), it suffices to check on \(\kbar\)-points because \(\Pic_{\Deltatilde/k, \fppf}\) and \(\Pic_{X'/k, \fppf}\) are represented by smooth group schemes over \(k\).

Since \(\Pic(\Deltatilde_{\kbar})\) is generated by effective line bundles, we may reduce to
the case \(\calL = \OO_{\Deltatilde}(Z)\) for an effective Cartier divisor \(Z\).  Using the fundamental exact sequence 
\[0 \to \OO_{\Deltatilde}(-Z) \to \OO_{\Deltatilde} \to \OO_Z \to 0,\]
we see that the first map
\(\Pic(\Deltatilde_{\kbar}) \to K_0(\Deltatilde_{\kbar})\)
sends the line bundle \(\OO_{\Deltatilde}(Z)\) to the class of the coherent sheaf \([\OO_Z]\).  (In other words, this is the map \(\varphi^1 \colon \CH^1(\Deltatilde_{\kbar}) \to \Gr^1_F K_0(\Deltatilde_{\kbar})\) from \cite{bw-ij}*{Equation (2.4)} precomposed with the isomorphism \(\Pic(\Deltatilde_{\kbar}) \simeq \CH^1(\Deltatilde_{\kbar})\).)  Furthermore \(j_*p^*[\OO_Z] = j_*[\OO_{p^{-1}Z}]=[j_*\OO_{p^{-1}Z}] \in F^2K_0(X'_{\kbar}) = \SK_0(X'_{\kbar})\), since \(j\) is affine (and so higher direct images vanish).  

Finally, we quotient by the class \(\nu_{X'}\) to obtain the desired natural transformation 
\[\Pic_{\Deltatilde/k, \fppf}\to\CH^2_{X'/k,\fppf}.\] 
That this map on \(\bar{k}\)-points agrees with flat pullback and proper pushforward of cycles follows from the explicit description above (for pullback) together with \cite{Fulton-IsectBook}*{Example 15.1.5} (for pushforward).
\end{proof}

\begin{prop}\label{prop:pj}
    Assume \(j\colon S\to X'\) is a closed embedding. There is a natural transformation
    \[\CH^2_{X'/k,\fppf}\to \Pic_{\Deltatilde/k , \fppf},\]
    which on \(\kbar\)-points agrees with the composition of the \(G_k\)-equivariant homomorphisms 
\[\CH^2(X'_{\kbar}) \xrightarrow{j^*} \CH^2(S_{\kbar}) \xrightarrow{p_*} \CH^1(\Deltatilde_{\kbar}) \simeq \Pic(\Deltatilde_{\kbar})\]
on Chow groups. If \(X'\) is projective and geometrically rational over \(k\), this induces a morphism of group schemes which  we denote \(p_*j^*\colon\bCH2_{X'/k}\to \bPic_{\Deltatilde/k}\).
\end{prop}

\begin{proof}
    First, for every \(k\)-scheme \(T\) we get functorial homomorphisms
    \[\K_0(X'_T) \xrightarrow{j^*} \K_0(S_T) \xrightarrow{p_*} \K_0(\Deltatilde_T),\]
    which give natural transformations \(\K_{0,X'/k}\to \K_{0,S/k} \to \K_{0,\Deltatilde/k}\) by \cite{bw-ij}*{Section 2.2.2}. We post-compose these maps with \(\det \colon \K_{0,\Deltatilde/k} \to \Pic_{\Deltatilde/k}\). Restricting to \(\SK_{0,X'/k}\) gives a natural transformation
    \[\SK_{0,X'/k} \to \Pic_{\Deltatilde/k}.\]
    We take the induced map on the sheafifications, and we claim that \(\nu_{X'}\) is in the kernel. 
    Let \(k'/k\) be a finite extension such that there exists a point \(x' \in (X' - S)(k')\).  Then \([\OO_{x'}] = \nu_{X'} \in \SK_{0, X'/k, \fppf}(k')\) by \eqref{eq:nu}.  By \cite{Fulton-IsectBook}*{Example 15.1.8} (using that \(X_{k'}\) is regular and \(j_{k'}\) is a perfect closed embedding) the pullback is 
    \[j^*[\OO_{x'}] = \sum (-1)^i \Tor^{\OO_{X'}}_i(j_*\OO_S, \OO_{x'}).\]
    Using the explicit resolution \(\OO_{X'}(-S) \to \OO_{X'}\) of \(j_*\OO_S\) and the fact that \(x' \notin S\), we see that \(\Tor^{\OO_{X'}}_0(j_*\OO_S, \OO_{x'}) = \Tor^{\OO_{X'}}_1(j_*\OO_S, \OO_{x'}) = 0\), and so \(j_*\nu_{X'} = 0\).
    
    Now we compute this map on \(\kbar\)-points. Let \(Z \subset X'_{\kbar}\) be an integral curve.  If \(Z\) is not contained in \(S_{\kbar}\subset X'_{\kbar}\), then using the explicit resolution of \(j_*\OO_S\) to compute \(\Tor^{\OO_{X'}}_i(j_*\OO_S, \OO_Z)\), we find that \(j^*[\OO_Z]=\Tor^{\OO_{X'}}_0(j_*\OO_S, \OO_Z)=\OO_{S\cap Z}\).  As in the proof of Proposition~\ref{prop:pi}, we have \(\det p_*\OO_{S\cap Z}\simeq \OO_{\Deltatilde}(p_*(S\cap Z))\).  
    
    If \(Z\) is contained in \(S_{\kbar}\subset X'_{\kbar}\), then all maps are zero in the explicit resolution of \(j_*\OO_S\) restricted to \(Z\), so
    \(j^*[\OO_Z] = [\OO_Z] - [\OO_Z(-{S})]\).  This is the pushforward to \(\K_0(S_{\kbar})\) of the unique class in \(\K_0(Z)\) of rank \(0\) and determinant \(\OO_Z(S)\).  By the excess intersection formula~\cite{Fulton-IsectBook}*{Corollary 6.3}, we have \(j^*[Z] =c_1(\OO_S(S)) \cap [Z] \in \CH^2(S_{\kbar})\).  This is the pushforward to \(\CH^2(S_{\bar{k}})\) of the class in \(\CH^1(Z)\) corresponding to \(\OO_Z(S)\) under the isomorphism \(\CH^1(Z) \simeq \Pic(Z)\).  The result now follows from~\cite{Fulton-IsectBook}*{Example 15.1.5}.
\end{proof}

\section{The Prym variety and polarized Prym schemes}\label{sec:PPrymScheme}

Let \(\piDelta\colon\Deltatilde\to\Delta\) be an \'etale double cover of smooth proper geometrically connected curves over \(k\), and let \(\iota\) denote the associated (geometrically) fixed-point-free involution. Let \(\lattice\) be a $k$-group scheme with trivial identity component and let \(\phi\colon \lattice\to{\bPic_{\Delta/k}}\) be a $k$-morphism of group schemes. We will be most interested in the case when there is an embedding \(r\colon \Delta\hookrightarrow W\) into a geometrically rational surface, and the morphism \(\phi\) is \(r^*\colon\bPic_{W/k}\to\bPic_{\Delta/k}\).

We define the \defi{\(\phi\)-polarized Prym scheme} of \(\Deltatilde \to \Delta\) to be the fiber product
\[
\PPrym_{\Deltatilde/\Delta}^{\phi} \colonequals \bPic_{\Deltatilde/k}\times_{\bPic_{\Delta/k}} \lattice  \subset \bPic_{\Deltatilde/k} \times \lattice.
\]

Note that this group scheme is defined over \(k\), and its identity component is the usual Prym variety \(\bPrym_{\Deltatilde/\Delta}\), which is a principally polarized abelian variety \cite{mumford-prym}*{Sect. 2, Cor. 2}. Further, each split connected component is a torsor under \(\bPrym_{\Deltatilde/\Delta}\). In the case of an embedding \(r\colon \Delta\hookrightarrow W\) into a geometrically rational surface inducing \(r^*\colon\bPic_{W/k}\to\bPic_{\Delta/k}\), we denote the \((\bPic_{W/k})\)-polarized Prym scheme by \(\PPrym_{\Deltatilde/\Delta}^{\bPic_{W/k}}\). When \(r^*\) is injective, for example when \(W \simeq \PP^2\), we have
\[\PPrym_{\Deltatilde/\Delta}^{\bPic_{W/k}}(k) \simeq \left\{[\calO_{\Deltatilde}(D)] \in \bPic_{\Deltatilde/k}(k) : \piDelta_*D \in \im \left(\Pic\Wbar\right) \right\}.\]

For any (closed) point \(D \in \lattice\), write \(V_D\) for the fiber of \(\PPrym_{\Deltatilde/\Delta}^{\phi}\) over \(D\) via the second projection. This subscheme is naturally identified with its image in \(\bPic^{\deg \phi(D)}_{\Deltatilde_{\kbar}/\kbar}\) via the first projection.
Since \(\piDelta_*\piDelta^*D = 2D\), we have
\[
    V_{2D + D'} = \piDelta^*D + V_{D'} \quad \textup{for all }D,D'\in \lattice.
\]
Thus, it suffices to study \(V_D\) for a set of representatives \(D\) for the elements of \(\lattice/2\lattice\).  

The subscheme \(V_{\id_L}\) has two connected components, each of which is geometrically irreducible (see \cite{mumford-prym}*{Section~3}). We write
\begin{equation}\label{eq:DefnOfPAndPtilde}
    P := \bPrym_{\Deltatilde/\Delta}\quad\textup{and}\quad \Ptilde:= V_{\id_L} - P
\end{equation}
for the two components.  Since \(V_{{\id_L}} = P \sqcup \Ptilde\) is a group scheme, \(\Ptilde\) is a \(2\)-torsion \(P\)-torsor.  Furthermore, for any \(D\in \lattice(\kbar)\), \((V_D)_{\kbar}\) can be translated to \((V_{{\id_L}})_{\kbar}\) , so the scheme  \((V_D)_{\kbar}\) also has two connected components.

\subsection{Distinguishing connected components of \texorpdfstring{\(V_D\)}{VD}}
\begin{lemma}[{\cite{mumford-prym}}]\label{lem:P-Ptilde-characterization} Let \(n\geq 0\) be an integer. Then
\[
P=(\iota_* - 1)(\mathbf{Pic}^{2n}_{\Deltatilde/k})
    \simeq \bPic^0_{\Deltatilde/k}/\piDelta^*\bPic^0_{\Delta/k}
    \quad\textup{and}\quad
    \Ptilde=(\iota_* - 1)(\bPic^{2n+1}_{\Deltatilde/k}).
\]
\end{lemma}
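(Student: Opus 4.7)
My approach would be to study the group scheme endomorphism $\sigma \colonequals \iota_* - 1$ of $\bPic_{\Deltatilde/k}$ and to extract all three assertions from its structure. First, I would check that $\sigma$ preserves degree (since $\iota_*$ does) and that its image lies in $V_{\OO_\Delta}$: the relation $\piDelta \circ \iota = \piDelta$ gives $\piDelta_* \circ \sigma = 0$. The key structural input is the push-pull identity $\piDelta^* \piDelta_* = 1 + \iota_*$ for the \'etale double cover $\piDelta$, which forces $\iota_*$ to act as $-1$ on all of $V_{\OO_\Delta}$ (in particular on both $P$ and $\Ptilde$). An immediate consequence is that $\sigma|_P$ equals multiplication by $-2$, hence is surjective on the abelian variety $P$; therefore $\sigma \colon \bPic^0_{\Deltatilde/k} \twoheadrightarrow P$.

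Next, I would exploit geometric connectedness of $\bPic^n_{\Deltatilde/\kbar}$ to see that $\sigma(\bPic^n_{\Deltatilde})$ lands in a single component of $V_{\OO_\Delta}$. For $n = 2m$, testing at $L = \piDelta^* D$ for any $D \in \bPic^m_{\Delta/\kbar}$ gives $\sigma(L) = 0 \in P$ (using $\iota_* \piDelta^* = \piDelta^*$), so the image lies in $P$; equality then follows from the previous paragraph. The crux of the argument, and what I expect to be the main obstacle, is identifying the kernel of $\sigma$. My plan would be to apply the low-term exact sequence of the Hochschild--Serre spectral sequence for the \'etale $\Z/2\Z$-cover $\piDelta$, using the computations $H^1(\Z/2\Z, \kbar^\times) = \mu_2(\kbar) = \Z/2\Z$ and $H^2(\Z/2\Z, \kbar^\times) = \kbar^\times/\kbar^{\times 2} = 0$ (the latter because $\kbar$ is algebraically closed of characteristic different from $2$), to conclude that $(\bPic_{\Deltatilde/\kbar})^{\iota_*} = \piDelta^* \bPic_{\Delta/\kbar}$. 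Restricting to degree zero and combining with the surjectivity above yields the isomorphism $P \cong \bPic^0_{\Deltatilde/k}/\piDelta^* \bPic^0_{\Delta/k}$, which then descends from $\kbar$ to $k$.

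The odd-degree statement would follow by a parity argument driven by the same kernel computation. For any $L_0 \in \bPic^{2n+1}_{\Deltatilde}$, the homomorphism property of $\sigma$ gives $\sigma(\bPic^{2n+1}_{\Deltatilde}) = \sigma(L_0) + P$, a single coset of $P$ that is either $P$ or $\Ptilde$. If it were $P$, then by the surjectivity $\sigma\colon \bPic^0_{\Deltatilde} \twoheadrightarrow P$ I could find $M \in \bPic^0_{\Deltatilde}$ with $\sigma(M) = \sigma(L_0)$, placing $L_0 - M$ in $\ker \sigma = \piDelta^* \bPic_{\Delta}$; but every element of $\piDelta^* \bPic_{\Delta}$ has even degree, contradicting $\deg(L_0 - M) = 2n+1$. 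Hence $\sigma(\bPic^{2n+1}_{\Deltatilde}) = \Ptilde$ surjectively. Finally, $\Ptilde$ is a $2$-torsion $P$-torsor since $V_{\OO_\Delta}/P \cong \Z/2\Z$.
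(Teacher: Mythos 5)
Your proposal is correct, and it takes a genuinely more self-contained route than the paper. The paper's proof leans on Mumford: it quotes the facts that \(P\) is the image of \(\bPic^0_{\Deltatilde/k}\) under \(\iota_*-1\) and that \((\iota_*-1)(\bPic^1_{\Deltatilde/k})\) has the \(\kbar\)-points of \(\Ptilde\), reduces to \(n\in\{0,1\}\) via the translation identity \((\iota_*-1)(D+\piDelta^*D_1)=(\iota_*-1)(D)\), and then upgrades from \(\kbar\)-points to schemes using smoothness of \(\iota_*-1\). You instead derive the surjectivity onto \(P\) from the push--pull identity \(\piDelta^*\piDelta_*=1+\iota_*\) (so \(\iota_*=-1\) on \(V_{\OO_\Delta}\) and \(\sigma|_P=-2\) is surjective), locate the image in the identity component by connectedness plus the test point \(\piDelta^*D\), compute the kernel via Hochschild--Serre for the \(\Z/2\)-cover, and settle the odd-degree case by a clean parity argument (an element of \(\ker\sigma=\piDelta^*\bPic_{\Delta}\) has even degree). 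All of these steps check out, and your approach has the advantage of making transparent exactly which inputs are used (only the two-component structure of \(V_{\OO_\Delta}\) is imported from Mumford, and that is already part of the setup preceding the lemma). The one place where you are slightly less complete than the paper is the scheme structure of the kernel: Hochschild--Serre identifies \(\ker(\iota_*-1)\) with \(\piDelta^*\bPic^0_{\Delta/k}\) only on \(\kbar\)-points, and to get the stated isomorphism of group schemes \(P\cong\bPic^0_{\Deltatilde/k}/\piDelta^*\bPic^0_{\Delta/k}\) one must also know the kernel is reduced. This is where the paper invokes smoothness of \(\iota_*-1\); in your framework it follows because the differential of \(\iota_*-1\) on \(\HH^1(\OO_{\Deltatilde})=\HH^1(\OO_\Delta)\oplus\HH^1(\eta)\) is \(0\oplus(-2)\), and \(-2\) is invertible in characteristic different from \(2\). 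Worth adding a sentence to that effect, but it is the same one-line fix the paper uses.
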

\begin{proof}
    Note that \((\iota_* - 1)(D + \piDelta^*D_1) = (\iota_* - 1)(D)\) for any divisor classes \(D_1\in \bPic^1_{\Delta/k}\) and  \(D\in \bPic^m_{\Deltatilde/k}\).  Therefore, \((\iota_* - 1)(\mathbf{Pic}^{m+2}_{\Deltatilde/k}) = (\iota_* - 1)(\mathbf{Pic}^{m}_{\Deltatilde/k})\) for any integer \(m\), and so we may assume that \(n=0\). By~\cite{mumford-prym}*{Section~3}, \(P\) is the image of \(\bPic^0_{\Deltatilde/k}\) under \(\iota_* - 1\).  Thus, by the first isomorphism theorem, \(P=(\iota_* - 1)(\bPic^0_{\Deltatilde/k})\simeq \bPic^0_{\Deltatilde/k}/\ker (\iota_* - 1)\).
    In addition, we also have that \(\piDelta^*\bPic^0_{\Delta/k}\subset \ker\left((\iota_* - 1)|_{\bPic^0_{\Deltatilde/k}}\right)\), and that this inclusion is surjective on
    \(\kbar\)-points.  Since \(\iota_* - 1\) is smooth by ~\cite{mumford-prym}*{Section~2}, this implies that \(\piDelta^*\bPic^0_{\Delta/k}= \ker\left((\iota_* - 1)|_{\bPic^0_{\Deltatilde/k}}\right)\). 
    Finally, \((\iota_* - 1)(\bPic^1_{\Deltatilde/k})=\Ptilde\) because they are both subvarieties of \(\bPic^0_{\Deltatilde/k}\) with the same \(\kbar\)-points and \(\iota_* - 1\) is smooth.
\end{proof}

We can consider points in \(\Sym^d(\Deltatilde)(k)\) as giving \(k\)-rational effective degree \(d\) divisors on \(\Deltatilde\).  From this viewpoint, we define the intersection \(s\cap \tilde{s}\) of two points \(s, \tilde{s}\in\Sym^d(\Deltatilde)(k)\) to be the effective divisor \(E_{s,\tilde{s}}\) of largest degree such that \(E_{s,\tilde{s}}\leq s, \tilde{s}\).
\begin{cor}\label{cor:SandStildeDifferByOdd} 
    Let \(D\in \lattice(k)\) be such that \(V_D\) is split. Let  \(s,\tilde{s} \in \Sym^{\deg \phi(D)}(\Deltatilde)(k)\) be such that
    \[
    [s], [\tilde{s}] \in V_D\;\textup{and}\;
    \piDelta_*(s) = \piDelta_*(\tilde{s})\in \Sym^{\deg \phi(D)}(\Delta).
    \]
    Then \([s]\) and \([\tilde{s}]\) lie in the same connected component of \(V_D\) if and only if \(\deg(s\cap \tilde{s})\equiv\deg \phi(D)\bmod 2\).
\end{cor}
\begin{proof}
Let \(e := {\deg}(s\cap \tilde{s})\). Then since \(\piDelta_*(s) = \piDelta_*(\tilde{s})\), we may write \(s = \sum_{i=1}^n P_i\) and \(\tilde{s} = \sum_{i=1}^{n_0}P_i + \sum_{i=n_0 + 1}^n \iota(P_i)\) where \(\sum_{i = 1}^{n_0}\deg(P_i) = e\) and \(\sum_{i = n_0 + 1}^n \deg(P_i) = \sum_{i = n_0 + 1}^n \deg(\iota(P_i))= \deg\phi(D) - e.\)

Applying Lemma~\ref{lem:P-Ptilde-characterization}, we conclude that the linear equivalence class of \(\tilde{s} - {s} = \sum_{i=n_0+1}^{n} (\iota_*P_i- P_i)\) is in \(P\) if \(\sum_{i = n_0 + 1}^n \deg(P_i) = \deg \phi(D)-e\) is even and in \(\Ptilde\) if \(\deg \phi(D)-e\) is odd.  Since \(P\) is the connected component containing the identity, \([s]\) and \([\tilde{s}]\) are in the same connected component if and only if \(\deg \phi(D)-e\) is even, as desired.
\end{proof}

\subsection{The canonically polarized Prym scheme}
\label{sec:CanonicalPrymScheme}

The \defi{canonically polarized Prym scheme} of \(\Deltatilde \to \Delta\) is  \(\PPrym_{\Deltatilde/\Delta}^{\phi}\) where \(\phi\) is the embedding of the subgroup generated by the canonical divisor \(K_\Delta\).  We will denote this by \(\PPrym_{\Deltatilde/\Delta}^{\langle K _\Delta\rangle}\). For the canonically polarized Prym scheme,  Prym--Brill--Noether theory gives the following result.

\begin{prop}[{\cite{mumford-prym}*{Section 6, Equation (6.1)}}]\label{prop:P1-parity}
The parity of the dimension \(\hh^0\) of global sections differs on the two geometric connected components of \(V_{K_{\Delta}}\). 
\hfill \qed
\end{prop}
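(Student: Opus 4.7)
The plan is to adapt Mumford's classical argument for theta characteristics to the Prym setting.

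First, since $\piDelta$ is \'etale, $K_{\tilde\Delta}=\piDelta^*K_\Delta$, and on divisor classes $\piDelta^*\piDelta_*=\mathrm{id}+\iota_*$. Hence, for $\tilde D \in V_{K_\Delta}$,
\[
\tilde D + \iota_*\tilde D = \piDelta^*\piDelta_*\tilde D \sim \piDelta^*K_\Delta = K_{\tilde\Delta},
\]
so $\iota_*\tilde D \sim K_{\tilde\Delta} - \tilde D$. Combining the Serre duality pairing $H^0(\tilde\Delta,\tilde D) \times H^0(\tilde\Delta, K_{\tilde\Delta} - \tilde D) \to k$ with the isomorphism $\iota^*\colon H^0(\tilde\Delta,\tilde D) \xrightarrow{\sim} H^0(\tilde\Delta,\iota_*\tilde D) \cong H^0(\tilde\Delta, K_{\tilde\Delta} - \tilde D)$ yields a canonical non-degenerate bilinear pairing $\beta_{\tilde D}$ on $H^0(\tilde\Delta,\tilde D)$. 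Using $\iota^{*2}=\mathrm{id}$ and that $\iota^*$ preserves the Serre trace on $H^1(\tilde\Delta,K_{\tilde\Delta})\cong k$, one verifies that $\beta_{\tilde D}$ is \emph{symmetric}.

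Next, I would argue that the function $\tilde D\mapsto h^0(\tilde\Delta,\tilde D)\bmod 2$ is locally constant on $V_{K_\Delta}$, following Mumford's strategy for theta characteristics. Over any smooth affine base $T\to V_{K_\Delta}$, the pushforward of the universal line bundle to $T$ is computed by a two-term complex $[\mathcal{A}\xrightarrow{f}\mathcal{B}]$ of locally free $\mathcal{O}_T$-modules of equal rank, since Riemann--Roch gives $\chi(\tilde D)=(2g_\Delta-2)+1-(2g_\Delta-1)=0$ for $\tilde D \in V_{K_\Delta}$. The pointwise pairings $\beta_{\tilde D}$ globalize to an isomorphism $\mathcal{B}\cong \mathcal{A}^\vee$ (up to a twist by a line bundle on $T$) under which $f$ becomes self-adjoint. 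The classical fact that the corank modulo $2$ of a symmetric morphism between vector bundles of equal rank is locally constant then yields the parity invariance on $V_{K_\Delta}$.

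Finally, to confirm that the two components of $V_{K_\Delta}$ actually carry different parities, one can appeal directly to Mumford~\cite{mumford-prym}*{Sect.~6}. Concretely, given any theta characteristic $\vartheta$ on $\Delta$, the projection formula yields
\[
h^0(\tilde\Delta,\piDelta^*\vartheta)=h^0(\Delta,\vartheta)+h^0(\Delta,\vartheta\otimes\sigma),
\]
where $\sigma$ is the $2$-torsion line bundle defining the cover. As $\vartheta$ varies over the theta characteristics on $\Delta$, the classes $\piDelta^*\vartheta \in V_{K_\Delta}$ realize both parities (via the classical theory of even and odd theta characteristics on $\Delta$ together with suitable choices of $\sigma$), while an application of Lemma~\ref{lem:P-Ptilde-characterization} shows that one can choose such $\vartheta$ and $\vartheta'$ with $\piDelta^*\vartheta$ and $\piDelta^*\vartheta'$ in distinct components of $V_{K_\Delta}$.

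The main obstacle is the deformation argument in the second paragraph: one must check carefully that the symmetry of $\beta_{\tilde D}$ globalizes to a genuine self-adjoint bundle map $\mathcal{A}\to\mathcal{A}^\vee$ compatibly with base change, so that the classical parity result for symmetric bundle maps applies. The final step of producing explicit classes in both components via pullback of theta characteristics is essentially bookkeeping once Mumford's framework is in place.
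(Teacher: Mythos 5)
The paper does not actually prove this proposition: it is quoted directly from Mumford's Prym paper (Section 6, Equation (6.1)) with a \(\qed\) in the statement, so your proposal is being measured against Mumford's argument rather than against anything in the text. Your first and third paragraphs are sound: \(\iota_*\tilde D \sim K_{\Deltatilde}-\tilde D\) for \(\tilde D \in V_{K_\Delta}\) is correct, and the projection formula \(h^0(\piDelta^*\vartheta)=h^0(\vartheta)+h^0(\vartheta\otimes\sigma)\) together with nondegeneracy of the Weil pairing (via \(q_{\vartheta\otimes\tau}(\sigma)=q_\vartheta(\sigma)+\langle\tau,\sigma\rangle\)) does show that both parities are realized on \(V_{K_\Delta}\).

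The gap is in your second paragraph, and it is fatal as written. First, there is no canonical nondegenerate pairing \(H^0(\tilde D)\times H^0(K_{\Deltatilde}-\tilde D)\to k\): Serre duality pairs \(H^0(\tilde D)\) with \(H^1(K_{\Deltatilde}-\tilde D)\), while the multiplication map lands in the \(g\)-dimensional space \(H^0(K_{\Deltatilde})\), which carries no canonical functional. What your construction genuinely produces is a self-duality of the two-term complex computing \(R\Gamma(\tilde D)\), i.e., a pairing between \(H^0\) and \(H^1\), not a form on \(H^0\). Second, and more seriously, the ``classical fact'' you invoke is false: the corank of a symmetric map between bundles of equal rank is \emph{not} locally constant mod \(2\) --- take \(f=\operatorname{diag}(t,1)\) on a trivial rank-\(2\) bundle over \(\A^1\), which is exactly how a family of smooth conics degenerates to rank \(2\). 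The skew-symmetric version of the statement is true, but it would force \(h^0 \bmod 2\) to be \emph{globally} constant on \(V_{K_\Delta}\), contradicting the very proposition you are proving; so no argument of this shape can work. Mumford's actual deformation step is different in kind: he realizes \(H^0(\tilde D)\) as the intersection \(W_1\cap W_2\) of two maximal isotropic subspaces of a fixed even-dimensional nondegenerate quadratic space (sections of \(\tilde D\) along a large auxiliary divisor, with quadratic form given by residues of \(s\cdot\iota^*s\) using \(\iota_*\tilde D\sim K_{\Deltatilde}-\tilde D\)), and then uses the fact that two maximal isotropics lie in the same ruling if and only if \(\dim(W_1\cap W_2)\) has a prescribed parity; the two rulings being the connected components of the orthogonal Grassmannian is what makes the parity locally constant and is ultimately what detects the two components of \(V_{K_\Delta}\). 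Until your paragraph two is replaced by this (or an equivalent) mechanism, the proof does not go through.
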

As a consequence, \(V_{K_{\Delta}}\) has two connected components (i.e., the two geometric connected components are defined over \(k\)).

\begin{defn}\label{def:Ptildem}
Let \(\Pm{1}\) denote the connected component of \(V_{K_{\Delta}}\) on which \(\hh^0\) is even, and let \(\Ptildem{1}\) denote the connected component on which \(\hh^0\) is odd.  For each positive integer \(m\) and \(e\in \{0,1\}\), define 
\[
    \Pm{2m + e} := \Pm{e} + mK_{\Deltatilde}, \quad \Ptildem{2m + e} := \Ptildem{e} + mK_{\Deltatilde},
\]
where \(\Pm{0} := P\) and \(\Ptildem{0} := \Ptilde\). (Note that as \(P\)-torsors, \(\Ptildem{m} = \Ptilde + \Pm{m}\) and \(2 \Pm{m} = 2\Ptildem{m} = \Pm{2m}\).)
Furthermore, for each positive integer \(m\), we define the subschemes \(\Sm{m}\) and \(\Stildem{m}\) of \(\Sym^{m\cdot\deg K_{\Delta}}\Deltatilde\) to be the preimages of \(\Pm{m}\) and \(\Ptildem{m}\), respectively, under the Abel--Jacobi map \(\Sym^{m\cdot\deg K_{\Delta}}\Deltatilde\to\bPic^{m\cdot\deg K_{\Delta}}_{\Deltatilde/k}\).
\end{defn}

\subsection{\'Etale double covers of smooth plane quartics}\label{sec:UnramDoubleCoverDeg4}\label{sec:PrymPlaneQuartics}

In this section, we specialize to the case where \(\Delta\) is a smooth plane quartic, which implies that the cover \(\Deltatilde\) is a nonhyperelliptic, nontrigonal genus \(5\) curve \cite{bruin}*{Lemma 3.1}.  Bruin~\cite{bruin} has studied such covers in detail.  In Theorem~\ref{thm:EqnsForCurves} below, we summarize some of Bruin's geometric results (parts~\eqref{part:Qis},~\eqref{part:Steincommutes}, and~\eqref{part:isoJacGammaToPrym}) and apply them to describe the Abel--Jacobi maps
\(\Stildem{1}\to \Ptildem{1}\) and \(\Sm{1} \to \Pm{1}\) (parts~\eqref{part:isoF2toS1} and~\eqref{part:Ptilde1}). (Bruin's results are stated in characteristic \(0\) but hold for all fields of characteristic different from \(2\); see Remark~\ref{rem:Bruin-characteristic}.)

    \begin{theorem}\label{thm:EqnsForCurves}
        Let \(k\) be a field of characteristic different from \(2\) and let \(\piDelta\colon \Deltatilde\to \Delta\) be an \'etale double cover of a smooth plane quartic curve.  
        \begin{enumerate}
            \item \cite{bruin}*{Section 3} There exist quadratic forms \(Q_1,Q_2,Q_3\in k[u,v,w]\) such that 
            \[
                \Delta = V(Q_2^2 - Q_1Q_3)\subset \PP^2\quad\textup{and}\quad
                \Deltatilde = V(Q_1- r^2, Q_2 - rs, Q_3 - s^2)\subset \PP^4.
            \]
            These quadratic forms are unique up to a \(\PGL_2\)-action, where \(\left(\begin{smallmatrix}a & b \\c & d\end{smallmatrix}\right)\) acts by
            \[
                \begin{pmatrix}
                a^2 &   2ac   & c^2 \\
                 ab & ad + bc & cd\\
                b^2 &   2bd   & d^2
                \end{pmatrix}\cdot \begin{pmatrix}Q_1\\Q_2\\Q_3\end{pmatrix}.
            \]
            \label{part:Qis}
            \item Given these quadratic forms \(Q_1,Q_2,Q_3\), consider the family \(\calQ\to \PP^1\) whose fiber above \([t_0:t_1]\) is given by \(V(t_0^2Q_1 + 2t_0t_1Q_2 + t_1^2Q_3 - (t_0r + t_1s)^2)\subset\PP^4\), which is a quadric of rank at most \(4\). The Stein factorization of the relative Fano variety of projective \(2\)-planes \(\calF_2(\calQ/\PP^1)\to \PP^1\) produces a genus \(2\) curve \(\Gamma\to \PP^1\) given by the equation
            \[
                y^2 = -\det\left(t^2M_1 + 2tM_2 + M_3\right),
            \]
            where \(M_i\) is the symmetric \(3\times3\) matrix associated to \(Q_i\).\label{part:Stein} 
            \item The morphism \(\phi\colon\calF_2(\calQ/\PP^1)\to \Sym^4\Deltatilde\) given by \(\Lambda\mapsto \Lambda\cap \Deltatilde\) 
            induces an isomorphism \(\calF_2(\calQ/\PP^1)\xrightarrow{\sim}\Sm{1}\).
            \label{part:isoF2toS1}
            \item {\cite{bruin}*{Lemma 4.1}} The morphism \(\phi\) induces a map \(\overline{\phi}\) such that the diagram 
            \[
                \xymatrix{
                    \calF_2(\calQ/\PP^1) \ar[r]^-{\phi}\ar[d] & \Sm{1} \ar[d]\\
                    \Gamma \ar[r]^-{\phibar} & \Pm{1}
                }
            \]
            commutes, where the left vertical map comes from~\eqref{part:Stein}.\label{part:Steincommutes}
            \item {\cite{bruin}*{Sec. 5, Case 4}} The morphism \(\overline{\phi}\) induces isomorphisms \(\bPic^0_{\Gamma/k} \xrightarrow{\sim} \bPrym_{\Deltatilde/\Delta}\) and \(\bPic^1_{\Gamma/k} \xrightarrow{\sim} \Pm{1}\)\label{part:isoJacGammaToPrym}\label{part:isoPic1ToP1}.
            \item The Abel--Jacobi map \(\Stildem{1}\to \Ptildem{1}\) is an isomorphism.\label{part:Ptilde1}\label{part:AJiso}
        \end{enumerate}
    \end{theorem}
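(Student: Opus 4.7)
The plan is to prove parts~\ref{part:Qis}, \ref{part:Steincommutes}, and \ref{part:isoJacGammaToPrym} by direct citation of Bruin's paper, and to supply geometric arguments for parts~\ref{part:Stein}, \ref{part:isoF2toS1}, and \ref{part:AJiso}. For part~\ref{part:Stein}, I would analyze the \(5\times 5\) symmetric matrix defining the family \(\calQ\subset \PP^4\times\PP^1\). In coordinates \((u,v,w,r,s)\) this matrix is block diagonal: the \((u,v,w)\)-block is \(M(t) = t_0^2 M_1 + 2t_0 t_1 M_2 + t_1^2 M_3\), while the \((r,s)\)-block is \(-\left(\begin{smallmatrix}t_0^2 & t_0 t_1 \\ t_0 t_1 & t_1^2\end{smallmatrix}\right)\), which has rank exactly \(1\) for every \(t\). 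Hence \(\calQ_t\) has rank at most \(4\), with equality unless \(\det M(t) = 0\), a binary form of degree \(6\) in \((t_0, t_1)\). Over the open locus where \(\calQ_t\) is a rank-\(4\) cone over a smooth quadric surface in \(\PP^3\), its 2-planes split into two disjoint \(\PP^1\)-rulings. Taking the Stein factorization of \(\calF_2(\calQ/\PP^1)\to\PP^1\) then yields a double cover of \(\PP^1\) branched along the \(6\) points where \(\det M(t) = 0\), which by Riemann--Hurwitz is a smooth genus \(2\) curve with the indicated defining equation.

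For part~\ref{part:isoF2toS1}, I would first observe that \(\Deltatilde \subset \calQ_t\) for every \(t\), since on \(\Deltatilde\) we have \(t_0^2 Q_1 + 2t_0 t_1 Q_2 + t_1^2 Q_3 = t_0^2 r^2 + 2t_0 t_1 rs + t_1^2 s^2 = (t_0 r + t_1 s)^2\). Since \(\Deltatilde\) is cut out by three quadrics in \(\PP^4\), it has degree \(8\), matching the canonical embedding of a genus \(5\) curve. For a 2-plane \(\Lambda \subset \calQ_t\), which necessarily passes through the apex of \(\calQ_t\), the projection \(\pi\colon[u:v:w:r:s]\mapsto[u:v:w]\) sends \(\Lambda\) to a line \(\ell\subset\PP^2\); a direct calculation (using that over a general point of \(\ell\cap\Delta\) exactly one of the two preimages under \(\piDelta\) lies on \(\Lambda\)) shows \(\piDelta_*(\Lambda\cap\Deltatilde) = \ell\cap\Delta\), a hyperplane section of \(\Delta\). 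Hence \(\Lambda\cap\Deltatilde\) has degree \(4\) and \(\phi(\Lambda)\in V_{K_\Delta}\). Injectivity of \(\phi\) follows because four generic points of \(\Deltatilde\) on \(\Lambda\) span it. To identify the image with the even parity component \(\Sm{1}\), I would use that for each fixed \(t\), the Abel--Jacobi composition restricts on a \(\PP^1\)-ruling of \(\calQ_t\) to a morphism \(\PP^1 \to \bPic^4_{\Deltatilde/k}\), which is constant as its target is a torsor under an abelian variety. Hence \(\phi(\Lambda)\) varies in a positive-dimensional linear system, giving \(\hh^0(\phi(\Lambda))\geq 2\). Clifford's inequality on the nonhyperelliptic genus \(5\) curve \(\Deltatilde\) forces \(\hh^0\leq 2\) for effective degree \(4\) divisors, so \(\hh^0 = 2\) is even, placing \(\phi(\Lambda)\in \Sm{1}\) by Proposition~\ref{prop:P1-parity}.

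For part~\ref{part:AJiso}, I would apply Clifford's inequality once more: any effective degree \(4\) divisor \(D\) on the nonhyperelliptic \(\Deltatilde\) satisfies \(\hh^0(D)\leq 2\). By Proposition~\ref{prop:P1-parity}, \(\hh^0\) is odd on \(\Ptildem{1}\), forcing \(\hh^0(D) = 1\) for every class \(D\in\Ptildem{1}\); hence every such class is effective with a unique effective representative, so the Abel--Jacobi map \(\Stildem{1}\to\Ptildem{1}\) is bijective. It is proper with finite fibers (hence finite) and birational, and since \(\Ptildem{1}\) is smooth (being a torsor under the Prym abelian variety), Zariski's main theorem gives the isomorphism. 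The main obstacle throughout is the parity identification in part~\ref{part:isoF2toS1}: the numerics alone place \(\phi(\Lambda)\) in \(V_{K_\Delta}\) but not in a specific component, and distinguishing \(\Sm{1}\) from \(\Stildem{1}\) requires combining the constancy of morphisms \(\PP^1 \to \bPic^4_{\Deltatilde/k}\) with Clifford's theorem to pin down \(\hh^0 = 2\).
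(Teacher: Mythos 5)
Your handling of parts~\eqref{part:Stein} and~\eqref{part:AJiso}, and of the containment \(\im\phi\subseteq\Sm{1}\) in part~\eqref{part:isoF2toS1}, tracks the paper closely: the block-diagonal analysis of the \(5\times5\) symmetric matrix (where the paper simply cites structural results on quadrics), the observation that the Abel--Jacobi map is constant on each ruling so that \(\hh^0(\phi(\Lambda))\geq 2\), the strict Clifford bound \(\hh^0\leq 2\) on the nonhyperelliptic genus \(5\) curve \(\Deltatilde\), and the forced identity \(\hh^0\equiv 1\) on \(\Ptildem{1}\) are exactly the steps the paper takes.

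There is, however, a genuine gap in part~\eqref{part:isoF2toS1}: you never address \emph{surjectivity} of \(\phi\) onto \(\Sm{1}\), only that its image lands there. This is not a formality. \(\Sm{1}\) is defined as the Abel--Jacobi preimage of the entire component \(\Pm{1}\subset V_{K_\Delta}\), and a priori it could contain effective divisors that are not of the form \(\Lambda\cap\Deltatilde\) for \(\Lambda\) a \(2\)-plane lying on a member of the \emph{pencil} \(\calQ\) (as opposed to the full net of quadrics through \(\Deltatilde\)); injectivity of \(\phi\) alone does not force surjectivity, since the irreducibility and dimension of \(\Sm{1}\) are not known in advance. The paper closes this by invoking Bruin's Lemma~4.1(1)--(2) — every \(\calD\in\Sm{1}\) is cut out by a \(2\)-plane in some rank-\(4\) quadric of the net — and then uses \(\piDelta_*\Sm{1}\subset|K_\Delta|\) to force that quadric to be a member of \(\calQ\). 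Without this step the claimed isomorphism \(\calF_2(\calQ/\PP^1)\xrightarrow{\sim}\Sm{1}\), which is used later (e.g.\ in Proposition~\ref{prop:BlowupQuadricFibration}\eqref{part:CurvesinP1}), is unproven. A secondary issue: you propose to dispose of part~\eqref{part:Qis} entirely by citation, but the uniqueness of \((Q_1,Q_2,Q_3)\) up to the \(\PGL_2\)-action is not merely quoted in the paper; the authors supply a separate Riemann--Roch argument identifying the quadrics \(Q\) with \(Q/Q_1\in\kk(\Delta)^{\times2}\) with the points of a conic in \(|2K_\Delta|\), so that claim also needs an argument rather than a bare reference.
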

    \begin{defn}
    \label{def:prym-curve}
         The \defi{Prym curve} of an \'etale double cover \(\Deltatilde\to \Delta\) of a smooth plane quartic curve is the genus \(2\) curve in part~\eqref{part:Stein} of the above theorem, and we denote it \(\Gamma_{\Deltatilde/\Delta}\).
    \end{defn}
    \begin{proof}
        The first claim in part~\eqref{part:Qis} follows from~\cite{bruin}*{Section 3}.  It remains to prove uniqueness up to the given \(\PGL_2\)-action.  From the equations, one can verify that \(\kk(\Deltatilde) = \kk(\Delta)(\sqrt{Q_1/u^2})\). We claim that if \(Q\) is any other quadric such that \(Q/Q_1\in\kk(\Delta)^{\times2}\), then \(Q = t_0^2Q_1 + 2t_0t_1Q_2 + t_1^2Q_3\), for some choice of \(t_0,t_1\).  In particular, these functions are parametrized by a \(\PP^1\), and the choice of \(Q_1,Q_3\), and \(Q_1 + 2Q_2 + Q_3\) corresponds to choosing \(3\) points on the \(\PP^1\).  Since any choice of \(3\) points differs from these by an element of \(\PGL_2\) (acting by automorphisms of \(\PP^1\)), this gives the desired statement.  To prove the claim, observe that \(Q/Q_1\in\kk(\Delta)^{\times2}\) implies that \(\frac12(V(Q) - V(Q_1)) = 0 \in \Pic^0\Delta\).  By the Riemann--Roch theorem, \(|\frac12V(Q_1)|\isom \PP^1\), and its image in \(|2K_{\Delta}|=|\OO_\Delta(2)|\) under the multiplication by two map gives a quadratic curve that contains the vanishing of \(Q_1,Q_3, Q_1 + 2Q_2 + Q_3\).  Thus, \(V(Q)\) has the claimed form, and since \(t_0^2Q_1 + 2t_0t_1Q_2 + t_1^2Q_3\) has the same square class as \(Q_1\), we also have the correct defining quadratic equation.
        
        Part~\eqref{part:Stein} follows from structural results about quadrics; see, e.g.,~\cite{EKM}*{Section 85}. 
        
        Now we consider~\eqref{part:isoF2toS1}.  If \(\Lambda\in \calF_2(\calQ/\PP^1)\), then \(\Lambda\) is completely contained in some quadric in the \(k\)-span of \(Q_1 - r^2, Q_2 - rs, Q_3 - s^2\), and thus the intersection \(\Lambda\cap \Deltatilde\) is given by the vanishing of two quadrics in the plane \(\Lambda\).  Since \(\Deltatilde\) is geometrically irreducible and spans \(\PP^4\), this intersection must be \(0\)-dimensional, and so \(\Deltatilde\cap \Lambda\) is a degree \(4\) \(0\)-dimensional scheme that spans \(\Lambda\).  In particular, \(\Lambda\mapsto\Lambda\cap \Deltatilde\) defines an injective morphism \(\phi\colon \calF_2(\calQ/\PP^1)\to \Sym^4\Deltatilde\). 
        
        In order to prove that the image of \(\phi\) is contained in \(\Sm{1}\) we must show that the divisor class of \(\phi(\Lambda) = \Lambda\cap \Deltatilde\) lands in \(\Pm{1}\), i.e., that 
        \[
            \piDelta_*(\phi(\Lambda)) \text{{\(\in |K_{\Delta}|\)}} \quad\textup{ and }\quad \hh^0([\phi(\Lambda)]) \equiv 0 \bmod 2.
        \]
        Since \(\Lambda\) is a \(2\)-plane contained in a \(3\)-dimensional rank \(3\) or \(4\) quadric \(\mathcal Q_{[t_0:t_1]}\), \(\Lambda\) must contain the singular locus of this quadric, in particular contains the point \(V(u, v, w, t_0r+t_1s)\).
        (Note that each quadric threefold in the pencil has rank \(3\) or \(4\), and the fiber over a point \([t_0:t_1]\) has rank \(3\) if and only if \(\det(t_0^2 M_1 + 2 t_0 t_1 M_2 + t_1^2 M_3)=0\).  In the rank \(4\) case, the point \(V(u, v, w, t_0r+t_1s)\) is the singular locus, while in the rank \(3\) case, the singular locus is a line containing this point.)
        
        Therefore, \(\Lambda\) must be contained in a hyperplane of the form \(au+bv+cw=0\), and so \(\piDelta_*(\phi(\Lambda))\) must be contained in the line \(au + bv + cw=0\).  Hence, recalling that \(\Delta\) is canonically embedded, we have  \(\piDelta_*(\phi(\Lambda))\) \(\in |K_{\Delta}|\).  Now let us consider the dimension \(\hh^0([\phi(\Lambda)])\).  If \(\Lambda, \Lambda'\) are 2-planes in the same ruling, then \([\phi(\Lambda)] = [\phi(\Lambda')]\); hence \(\hh^0([\phi(\Lambda)]) \geq 2\).  By Clifford's theorem on special divisors, \(\hh^0([\phi(\Lambda)]) \leq 2\).
        Thus, we have the desired parity, and so \(\im\phi\subset \Sm{1}\).  
        
        To prove that \(\phi\) surjects onto \(\Sm{1}\) we appeal to~\cite{bruin}*{Lemma 4.1(1) and 4.1(2)}.  This result of Bruin implies that any \(\calD\in \Sm{1}\) is \(\Lambda\cap \Deltatilde\), for some \(2\)-plane \(\Lambda\) contained in a rank \(3\) or \(4\) quadric contained in the net of quadrics spanned by \(\{Q_1 - r^2, Q_2 - rs, Q_3 - s^2\}\).  From the description in \cite{bruin}*{Section 4} of the singular locus of the net, all the rank \(3\) quadrics in this net are in \(\calQ\) (in Bruin's notation, \(\calQ\) is parametrized by \(\Gamma^- \simeq\PP^1\)). So we may assume the rank is \(4\). Then, since, by definition, \(\piDelta_*\Sm{1}\subset |K_{\Delta}|\subset \Sym^4\Delta\), the rank \(4\) quadric must have its singular point on the line \(V(u,v,w)\).  Thus, the rank \(4\) quadric must be a member of \(\calQ\).

        Finally, we will show that \(\Sm{1}\) is smooth to conclude that \(\phi\) is an isomorphism.  Since \(\Sm{1} \to \Pm{1}\) is a Severi--Brauer fibration over its image, it suffices to show that the image is smooth.  The image is the Prym--Brill--Noether locus with \(h^0 = 2\), and so it is smooth of the expected dimension at a point \(L \in \Pic \Deltatilde\) if the Prym--Petri map is injective \cite{welters}*{(1.9)}.  To recall the definition of the Prym--Petri map \cite{welters}*{(1.8)}, first consider the map
        \[H^0(L) \otimes H^0(L) \to H^0(L) \otimes H^0(K_{\Deltatilde} \otimes L^{-1}) \xrightarrow{\alpha} H^0(K_{\Deltatilde}) \to H^0(K_{\Deltatilde})/\varpi^*H^0(K_{\Delta}),\]
        where the first map sends \(s \otimes t \mapsto s \otimes \iota^* t\), the map \(\alpha\) is the usual Petri map, and the final projection maps \(\lambda \mapsto (1/2)(\lambda - \iota^* \lambda)\).  Since this map is skew-symmetric, it factors as \(\bigwedge^2 H^0(L) \to H^0(K_{\Deltatilde})/\varpi^*H^0(K_{\Delta})\); this induced map is the \defi{Prym--Petri map}.  Given two independent sections \(s\) and \(t\) in \(H^0(L)\), the divisors \(V(s)\) and \(V(\iota^* t)\) correspond to two projective \(2\)-planes in some quadric in the family \(\calQ\) that meet along a line (which contains the singular locus of the quadric).  Their span is a hyperplane, and the Prym--Petri map is injective if this hyperplane is not pulled back from \(H^0(K_\Delta)\).  To prove that the span of \(V(s), V(\iota^*t)\) is not pulled back from \(H^0(K_\Delta)\), first observe that for any fixed quadric \(Q\) from \(\calQ\), projection from the line \(u=v=w=0\) yields a map \(\calF_2(Q) \to \check{\PP}^2\) that is \(2\)-to-\(1\) onto its image.  It follows that, given any \(2\)-plane \(\Lambda \subset Q\), the only other \(2\)-plane with the same image under projection from the line \(u=v=w=0\) is \(\iota\Lambda\).  If \(s\wedge t \neq 0\), then \(V(s)\neq V(t) =\iota V(\iota^* t)\), and the map is injective as desired.
        
        Part~{\eqref{part:Steincommutes}} follows from~\cite{bruin}*{Lemma 4.1}, and the first claim of part~\eqref{part:isoJacGammaToPrym} is proved in~\cite{bruin}*{Section 5, Case 4}.  The second claim of part~{\eqref{part:isoPic1ToP1}} follows from the first together with the universal property of \(\Gamma\to \bPic^1_{\Gamma/k}\).        Finally, for part~\eqref{part:Ptilde1}, recall that \(\Ptildem{1}\) is the connected component on which \(\hh^0\) is odd. Since, as explained above, \(\hh^0\) is at most 2, the only possibility is that \(\hh^0 = 1\) identically on \(\Ptildem{1}\).  Since the fibers of the map \(\Stildem{1}\to \Ptildem{1}\) are projective spaces of dimension \(\hh^0 - 1\), it is an isomorphism.
\end{proof}

\begin{remark}\label{rem:Bruin-characteristic}
    The results in \cite{bruin}*{Sections 3--4 and Section 5 Case 4} are stated in characteristic 0 but are valid in any characteristic different from \(2\). \cite{bruin}*{Lemma 3.1} does not require characteristic 0: a genus 5 curve always has at most one \(g^1_3\) and no \(g^2_3\)'s. Petri's theorem also holds in any characteristic, so the only part of the proof of \cite{bruin}*{Lemma 3.2} that uses characteristic \(0\) is in invoking Bertini's theorem to argue that a general quadric in \(\Lambda\) is smooth away from \(\Deltatilde\). Smoothness of a general member also holds in odd characteristic because \cite{bruin} shows that every quadric in \(\Lambda\) is smooth along \(\Deltatilde\), so the generic fiber of \(\Bl_{\Deltatilde}\PP^4\to\Lambda\) is a regular quadric threefold in \(\PP^4_{\kk(\Lambda)}\), and a regular quadric can fail to be smooth only in characteristic 2 \cite{EKM}*{Remark 7.20}.
\end{remark}

\section{Intermediate Jacobian torsors}\label{sec:IJ-torsors}

We now transition to the study of smooth geometrically ordinary and geometrically standard conic bundles \(\pi\colon X\to W\), for \(W\) a smooth geometrically rational surface.
The goal of this section is to relate the codimension \(2\) Chow group \(\CH^2\Xbar\) to the \(\kbar\)-points of the \((\bPic_{W/k})\)-polarized Prym scheme \(\PPrym^{\bPic_{W/k}}_{\Deltatilde/\Delta}\) as Galois modules.

When \(X\) is geometrically rational, we can consider the intermediate Jacobian torsors of \(X\), i.e., the split connected components of Benoist--Wittenberg's codimension 2 Chow scheme \(\bCH2_{X/k}\).  We prove that the connected components of  Benoist--Wittenberg's codimension 2 Chow scheme \(\bCH2_{X/k}\) 
 are each isomorphic to a connected component of  \(\PPrym^{\bPic_{W/k}}_{\Deltatilde/\Delta}\).  
The component group \(\bCH2_{X/k}/(\bCH2_{X/k})^0\) is isomorphic (as a Galois module) to the group of curve classes \(\NS^2 \Xbar\)~\cite{bw-ij}*{Theorem~3.1}, and so each connected component is identified with an algebraic curve class.
Our precise result is as follows.
\begin{theorem}
\label{thm:IJTtorsorsOfConicBundles}
    Let \(k\) be a field of characteristic different from 2, and let \(\piX \colon X\to W\) be a conic bundle over a smooth geometrically rational surface, all defined over \(k\), with smooth and geometrically irreducible discriminant cover \(\piDelta\colon \Deltatilde\to \Delta\).
    \begin{enumerate}
    \item There is a Galois-equivariant surjective group homomorphism 
    \[ \CH^2\Xbar \to \PPrym^{\bPic_{W/k}}_{\Deltatilde/\Delta}(\kbar), \]
    yielding, for any \(\gamma \in (\NS^2X_{\kbar})^{G_k}\), a Galois-equivariant isomorphism between \((\CH^2X_{\kbar})^\gamma\) 
    and the \(\kbar\)-points of one of the connected components of \(\PPrym_{\Deltatilde/\Delta}^{\bPic_{W/k}}\),  that is compatible with the actions of the identity components.
    \label{part:IsoOnkbarPts}
    
    \item There is a unique Galois-invariant algebraic curve class \(\gammatilde_0\) given by a \(\kbar\)-line 
    contained in a 
    singular fiber of \(\pi\), and for this class the homomorphism from~\eqref{part:IsoOnkbarPts} induces the \(G_k\)-equivariant isomorphisms
    \[(\CH^2\Xbar)^{n \gammatilde_0} \simeq \begin{cases} P(\kbar) &: \text{ \(n\) even,} \\ \Ptilde(\kbar) &: \text{ \(n\) odd.} \end{cases}\]\label{part:gamma0tilde}
    
    \item If \(X\) is geometrically rational, the map in \eqref{part:IsoOnkbarPts} can be upgraded to a surjective morphism of group schemes over \(k\) 
    \[\bCH2_{X/k} \to \PPrym_{\Deltatilde/\Delta}^{\bPic_{W/k}}\]
    that is an isomorphism when restricted to each connected component.
    In particular, for any \(\gamma \in (\NS^2X_{\kbar})^{G_k}\), the torsor \((\bCH2_{X/k})^\gamma\) is isomorphic to one of the connected components of \(\PPrym_{\Deltatilde/\Delta}^{\bPic_{W/k}}\), and \((\bCH2_{X/k})^{n \gammatilde_0}\) is isomorphic to \(P\) for \(n\) even and to \(\Ptilde\) for \(n\) odd.\label{part:IsoAsTorsors}
    \end{enumerate}
\end{theorem}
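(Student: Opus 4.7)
The plan is to construct a Galois-equivariant group homomorphism $\psi \colon \CH^2 \Xbar \to \PPrym^{\Pic \Wbar}_{\Deltatilde/\Delta}(\kbar)$ induced by an incidence correspondence, show that it restricts on $(\CH^2\Xbar)^0$ to Beauville's isomorphism with $P(\kbar) = \Prym_{\Deltatilde/\Delta}(\kbar)$ \cite{beauville-ij}*{Thm.~3.1}, and then use this to control the component structure. The correspondence is the incidence variety $I \subset \Deltatilde \times X_\Delta$ parameterizing pairs (component of a singular fiber, point of that component); the map $\psi = (\mathrm{pr}_{\Deltatilde})_* \circ (\mathrm{pr}_X)^*$, with refined intersection on the source to handle improper intersections, sends a codimension $2$ cycle on $X$ to a divisor class on $\Deltatilde$, is manifestly Galois-equivariant, and has image in $\PPrym^{\Pic \Wbar}_{\Deltatilde/\Delta}(\kbar)$ because $\varpi_*\psi([C]) = (\pi_*[C])|_\Delta$ lies in the image of $\Pic\Wbar$.

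For part~\eqref{part:IsoOnkbarPts}, the agreement of $\psi|_{(\CH^2\Xbar)^0}$ with Beauville's isomorphism can be checked on classes of the form $\sum_i[\ell_{\tilde p_i}] - \sum_j[\ell_{\tilde q_j}]$ of matching degree, which generate $(\CH^2\Xbar)^0$. For surjectivity onto each remaining component $V_D$ (with $D \in \Pic\Wbar$), given $[\tilde D] \in V_D(\kbar)$ I would produce a curve class in $X$ mapping to $[\tilde D]$ by taking a general curve $B \subset W$ representing $D$ and a section $\sigma$ of $\pi|_{\pi^{-1}(B)}$ whose intersection with each singular fiber over $B \cap \Delta$ lies on the component prescribed by $[\tilde D]$. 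Since both sides are torsors under $P(\kbar)$ on each component, the induced map on components is forced to be an isomorphism.

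For part~\eqref{part:gamma0tilde}, I identify $\gammatilde_0$ as the class of any line $\ell_{\tilde p}$ in a singular fiber. Because $\Deltatilde$ is geometrically irreducible, the family of $\ell_{\tilde p}$ is connected and algebraic, so $\gammatilde_0$ is a well-defined Galois-invariant class; it is characterized (hence unique) as the only algebraic curve class pairing as $1$ with a smooth fiber of $\pi$ and represented by a curve contained in $X_\Delta$. An excess intersection computation using the incidence correspondence gives $\psi([\ell_{\tilde p_0}]) = [\tilde p_0] - [\iota\tilde p_0]$, where the two summands record the contributions from $\{\tilde p_0\} \times \ell_{\tilde p_0} \subset I$ and from the node of $\ell_{\tilde p_0} \cup \ell_{\iota\tilde p_0}$; the constraint $\varpi_*\psi([\ell_{\tilde p_0}]) = 0$ already forces the two coefficients to be negatives of one another. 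Lemma~\ref{lem:P-Ptilde-characterization} then identifies $n([\tilde p_0]-[\iota\tilde p_0]) = (\iota_* - 1)(-n[\tilde p_0])$ as lying in $P$ for $n$ even and in $\Ptilde$ for $n$ odd.

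Finally, for part~\eqref{part:IsoAsTorsors}, I use the $k$-scheme $\bCH2_{X/k}$ of Benoist--Wittenberg \cite{bw-ij}: both $\bCH2_{X/k}$ and $\PPrym^{\Pic\Wbar}_{\Deltatilde/\Delta}$ are $k$-schemes of finite type whose components are torsors under abelian varieties over $k$, and the Galois-equivariant $\kbar$-isomorphisms from part~\eqref{part:IsoOnkbarPts} descend to $k$-isomorphisms of torsors under the hypothesis that $k$ is perfect or $W(k)\neq\emptyset$. The main technical obstacle will be the excess intersection computation in part~\eqref{part:gamma0tilde} and verifying that the component assignment is insensitive to the choice of cycle representative for $n\gammatilde_0$; a secondary subtlety is the descent in part~\eqref{part:IsoAsTorsors} when $k$ is imperfect, where the hypothesis $W(k) \neq \emptyset$ provides a canonical basepoint to rigidify the descent.
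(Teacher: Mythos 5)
Your construction is essentially the paper's: the incidence correspondence $I$ is the $\PP^1$-bundle $S \to \Deltatilde$ obtained by normalizing $X_\Delta$, realized inside the blow-up $\eps\colon X'\to X$ along the curve of singular points of the fibers, and your $\psi$ is the paper's $\pje$. The excess-intersection identity $\psi([\ell_{\tilde p}]) = \pm(\iota_*\tilde p - \tilde p)$ is Lemma~\ref{lem:BeauvilleGroupLem} (Beauville, Beltrametti--Francia), the comparison with Beauville's isomorphism on $(\CH^2\Xbar)^0$ is Theorem~\ref{thm:jac0}, and parts~(1) and~(2) then assemble exactly as in Section~\ref{sec:proof_IJT}. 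One mild divergence: for surjectivity onto the non-identity components you propose to construct, for each $[\tilde D]\in V_D(\kbar)$, a section of $X_B\to B$ meeting prescribed components of the singular fibers. The paper gets away with less: $\piX_*\colon \NS^2\Xbar\to\NS^1\Wbar$ is surjective by Tsen's theorem, so one of the two components of each $V_D$ is hit, and fibral classes of odd degree hit $\Ptilde$, so the homomorphism property covers all components. Your stronger claim is true over $\kbar$ (blow down the unwanted components of the singular fibers and take a section of the resulting model), but that justification should be supplied rather than asserted.

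The one genuine gap is in part~(3). A $k$-point of $W$ does not furnish basepoints for the torsors $(\bCH2_{X/k})^\gamma$ for general $\gamma$ (it only trivializes $(\bCH2_{X/k})^{2\gammatilde_0}$), so it cannot ``rigidify the descent'' as you suggest. The actual mechanism (Lemma~\ref{lem:Torsors_kbarPoints}) is that a Galois-equivariant bijection of $\kbar$-points respecting the $P(\kbar)$-action forces the classifying cocycles to coincide, which gives an isomorphism over $\kperf$ unconditionally; to pass from $\kperf$ to $k$ one needs the torsors to be killed by an integer coprime to $\Char k$, because the kernel of $\HH^1(k,P)\to\HH^1(\kperf,P)$ is $p$-primary. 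This is precisely where $W(k)\neq\emptyset$ enters: Proposition~\ref{prop:NS2}\eqref{part:krationalclasses} uses it to show that $2\gamma$ or $4\gamma$ is represented by a $k$-rational cycle, whence (together with Corollary~\ref{cor:2primary}) every torsor in sight is $2$- or $4$-torsion. You should replace the basepoint idea with this $2$-primarity argument.
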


\begin{remark}\label{rem:maps-are-pje}
The homomorphism in Theorem~\ref{thm:IJTtorsorsOfConicBundles}\eqref{part:IsoOnkbarPts} is given by \((\pje, \pi_*) \colon \CH^2 X_{\kbar} \to \Pic \Deltatilde_{\kbar} \times \Pic W_{\kbar}\) (defined in Section~\ref{sec:beauville}).  Our results from Section~\ref{sec:BWreview} will be used to show that this can be upgraded to the morphism of group schemes of Theorem~\ref{thm:IJTtorsorsOfConicBundles}\eqref{part:IsoAsTorsors}.
\end{remark}

    In Section~\ref{sec:NS2}, we give a description for the group of algebraic curve classes on conic bundle threefolds over geometrically rational surfaces. In Section~\ref{sec:beauville}, we recall some intersection-theoretic calculations of Beauville used in his proof that the intermediate Jacobian of a conic bundle over \(\PP^2\) over an algebraically closed field is the Prym variety of \(\piDelta \colon \Deltatilde \to \Delta\).  In Section~\ref{sec:scheme_map},
    we upgrade this identification to a morphism of group schemes \(\bPic_{\Deltatilde/k} \to \bCH2_{X/k}\) for \(X\) geometrically rational. We show that this morphism induces an isomorphism of group schemes from the identity component of the \((\bPic_{W/k})\)-polarized Prym scheme to \((\bCH2_{X/k})^0\), recovering Beauville's result for \(W=\PP^2\) after base change to \(\kbar\).
    In Section~\ref{sec:proof_IJT}, we explain how these results combine to prove Theorem~\ref{thm:IJTtorsorsOfConicBundles}.  Finally, in Section~\ref{subsec:Explicitpje}, we explicitly determine the images of certain curves classes under the map in Theorem~\ref{thm:IJTtorsorsOfConicBundles}\eqref{part:IsoOnkbarPts}; this will be used in our rationality constructions in Section~\ref{sec:IJTvanishingConsequences}.

\subsection{Algebraic curve classes on \texorpdfstring{\(X\)}{X}}\label{sec:NS2}

\begin{prop}\label{prop:NS2}
    Let \(k\) be a field of characteristic different from \(2\). Let \(\piX\colon X\to W\) be a conic bundle threefold over a smooth geometrically rational surface with smooth and geometrically irreducible discriminant cover \(\Deltatilde\to \Delta\), all defined over \(k\). Let \(C\subset W\) be a curve whose support does not contain \(\Delta\) and let \(C_0\) be a (geometric) irreducible component of \(C_{\kbar}\).
    \begin{enumerate}
        \item There exists a surface \(\widetilde{W}_C\subset X\), defined over \(k\), such that \(\pi|_{\widetilde{W}_C}\) is generically \(2\)-to-\(1\) and \(\widetilde{W}_C\) meets each singular fiber of \(X_C\) transversely and each \(\kbar\)-line in a singular fiber with multiplicity \(1\).\label{part:Wtilde}
        \item  There exists a \(\kbar\)-curve \(\frakS_{C_0}\subset (X_\kbar)_{C_0}\) that maps birationally onto \(C_0\) under \(\piX\). In particular, the map \(\piX_*\colon \NS^2\Xbar \to \NS^1\Wbar\) is surjective.\label{part:surjectivity}
        \item The subgroup of \(\NS^2\Xbar\) generated by fibral curves is free of rank \(1\) and is generated by the class \(\gammatilde_0\) of any \(\kbar\)-line contained in
        a singular fiber.\label{part:fibral}
        \item  We have \label{part:Relation}
        \(
        (\widetilde{W}_C)_\kbar \cap (X_\kbar)_{C_0} - 2\frakS_{C_0} \algequiv m\gammatilde_0\) for some integer \(m\).
        
        \item  As subgroups of \(\NS^2\Xbar\), we have \(2(\ker \piX_*) \subset \Z\gammatilde_0\). 
        \label{part:kernel}
        \item \label{part:rationalNS2} If \(X\) is geometrically rational, then \(\NS^2\Xbar\) is free, so in particular we have an exact sequence
        \[
        0 \to \Z\gammatilde_0 \to \NS^2 \Xbar \xrightarrow{\piX_*} \NS^1 \Wbar\to 0.
        \]
    \end{enumerate}
\end{prop}
\begin{proof}

\eqref{part:Wtilde}: For a point \(\pt \in (C \cap \Delta)_{\reduc}\), the fiber \(X_{\pt}\) is a conic over \(\kk(\pt)\) that is geometrically reducible. The antidualizing sheaf of a conic \(X_{\pt}\) has a section whose support does not contain the singular point of \(X_{\pt}\) and the section is reduced.
Thus, if there exists an invertible sheaf \(\scrF\) on \(X\) such that
\begin{enumerate}
    \item \(\scrF_{X_{\pt}}\simeq \omega_{X/W}^\vee|_{X_{\pt}}\) for all \(\pt\), and
    \item \(\HH^0(X, \scrF) \to \oplus_{\pt\in (C\cap \Delta)_{\reduc}}\HH^0(X_{\pt}, \scrF|_{X_{\pt}})\) is surjective,
\end{enumerate}
then there is a global section of \(\scrF\) that gives the desired \(\widetilde{W}_C\).

We claim that for any ample \(H\), \(\scrF :=\omega_{X/W}^\vee\otimes \piX^* H^{\otimes m}\) has the desired properties. Indeed, \((\omega_{X/W}^\vee\otimes \piX^* H^{\otimes m})|_{X_{\pt}} \simeq \omega_{X/W}^\vee|_{X_{\pt}} \simeq \omega_{X_{\pt}}^\vee\) by the adjunction formula, so \(\HH^0(X_{\pt}, (\omega_{X/W}^\vee\otimes \piX^* H^{\otimes m})|_{X_{\pt}})\simeq \HH^0(X_{\pt}, \omega_{X_{\pt}}^\vee)\).

It remains to show that for sufficiently large \(m\), the restriction map on global sections
\begin{equation}\label{eq:global_sections}
    H^0(X, \omega_{X/W}^\vee\otimes \piX^* H^{\otimes m}) \to \bigoplus_{\pt\in (C\cap\Delta)_{\red}} H^0(X_{\pt}, (\omega_{X/W}^\vee\otimes \piX^* H^{\otimes m})|_{X_{\pt}})
\end{equation}
is surjective; we do so by showing that
\(
    \HH^1(X, \omega_{X/W}^\vee\otimes \pi^* H^{\otimes m} \otimes \piX^*\scrI) = 0, 
\)
where \(\scrI\) is the ideal sheaf of \((C\cap\Delta)_{\reduc}\subset W\).  Using the Leray spectral sequence, it suffices to show
\begin{align*}
    \HH^1(W, \pi_*(\omega_{X/W}^\vee\otimes \piX^* H^{\otimes m} \otimes \piX^*\scrI)) = H^1(W, \piX_*(\omega_{X/W}^\vee)\otimes H^{\otimes m} \otimes \scrI) &= 0, \\
    \HH^0(W, R^1\piX_*(\omega_{X/W}^\vee\otimes \pi^* H^{\otimes m} \otimes \piX^*\scrI)) = \HH^0(W, R^1\piX_*(\omega_{X/W}^\vee)\otimes H^{\otimes m} \otimes \scrI) &= 0,
\end{align*}
where the left equalities hold by the push-pull formula.  The first vanishing holds by Serre's criterion for ampleness, and the second holds because \(R^1\piX_*(\omega_{X/W}^\vee)=0\) \cite{Sarkisov-conic-bundles}*{Section 1.5}.

\eqref{part:surjectivity}--\eqref{part:rationalNS2}: As the rest of the parts of the proposition are geometric, henceforth we will assume \(k=\kbar\) and drop \(\kbar\) from the notation.

\eqref{part:surjectivity}: The first statement follows from Tsen's theorem, and the second follows from the first together with Chow's moving lemma.

\eqref{part:fibral}:  The assumption that the discriminant cover is smooth and geometrically irreducible implies that all fibers are rank at least \(2\).  Furthermore, since any two points on \(W\) are algebraically (in fact, rationally) equivalent, any class in the subgroup of fibral curves can be expressed as a sum of lines contained in singular fibers.  Thus, it suffices to show that any two lines contained in singular fibers are algebraically equivalent.  The normalization of \(X_{\Delta}\) is a \(\PP^1\)-bundle \(S\to \Deltatilde\) such that for any closed \(\widetilde{\pt}\in \Deltatilde\) the curve \(S_{\widetilde{\pt}}\simeq\PP^1\) maps isomorphically to a line in \(X_{\piDelta(\widetilde{\pt})}\).  Hence, the statement follows from the fact that \(S_{\widetilde{\pt}}\sim_{\alg} S_{\widetilde{\pt}'}\) for any two points \({\widetilde{\pt}}, {\widetilde{\pt}}'\in \Deltatilde\).

\eqref{part:Relation}: The conic bundle surface \(X_{C_0}\) is birational to a regular conic bundle surface over a smooth model \(\tilde{C}_0\) of \(C_0\). Recall that the N\'eron--Severi group of a regular conic bundle surface \(X_{\tilde{C}_0}\) over an algebraically closed field is freely generated by the class of a section, a smooth fiber \(F\), and one component of each singular fiber. On \(X_{C_0}\) the difference of the strict transforms of \(\widetilde{W}_C\cap X_{C_0}\) and \(2\frakS_{C_0}\) is algebraically equivalent to a combination of fibral curves, and hence on \(X_{C_0}\) we have \(\widetilde{W}_C\cap X_{C_0}-2\frakS_{C_0}\algequiv m \gammatilde_0\) for some integer \(m\).
   
    \eqref{part:kernel}:  Let \(\gamma\in \ker \pi_*\) and let \(Z_i\subset X\) be a collection of integral curves such that \(\gamma = \sum_i n_i Z_i\). By Chow's moving lemma, we may assume \(\piX(Z_i)\not\subset\Delta\) for all \(i\).  Observe that if \(\pi(Z_i)\) is a point, then by~\eqref{part:fibral} we have \(Z_i \in \Z\gammatilde_0\) as desired.  Therefore, we may reduce to the case that \(C_i := \pi(Z_i)\) is a curve for all \(i\).  Let  \(C:= \cup C_i\). For each \(i\), let \(d_i\) be the degree of the map \(Z_i\to C_i\).  Note that \(\pi_*\gamma \algequiv \sum n_id_iC_i\algequiv 0\) implies
    \begin{equation}\label{eq:algtrivial}
    \sum_i (n_id_i)(\widetilde{W}_{C}\cap X_{C_i})\algequiv 0.    
    \end{equation}
     Using the structure of N\'eron--Severi groups of conic bundle surfaces as in the proof of~\eqref{part:Relation}, we conclude that \(Z_i \algequiv d_i \frakS_{C_i} + a_i \gammatilde_0\) for some \(a_i\in\mathbb{Z}\), and so
    \begin{align*}
        2\gamma = \sum_i2n_iZ_i &\algequiv \sum_i \left(2n_id_i \frakS_{C_i} + 2n_ia_i\gammatilde_0\right)\\
        & \algequiv \sum_i \left[(n_id_i)(\widetilde{W}_{C}\cap X_{C_i}) + (2n_ia_i - n_id_im_{C_i})\gammatilde_0\right] &\textup{by}~\eqref{part:Relation}\\
        & \algequiv \left(\sum_i (2n_ia_i - n_id_im_{C_i})\right)\gammatilde_0&\textup{by}~\eqref{eq:algtrivial}.
    \end{align*} 
     
     \eqref{part:rationalNS2}: When \(X\) is a smooth projective \emph{rational} threefold over an algebraically closed field, \(\NS^2 X\) is free by \cite{bw-ij}*{Theorem 3.1(v)}.  For \(\widetilde{W}_C\) as in part~\eqref{part:Wtilde} we have \(\widetilde{W}_C\cdot\gammatilde_0=1\), which since \(X\) is smooth implies that \(\gammatilde_0\) is not divisible in \(\NS^2 X\). Part~\eqref{part:kernel} then implies that the kernel of \(\piX_*\) must be freely generated by \(\widetilde{\gamma}_0\) and part~\eqref{part:surjectivity} gives surjectivity.
\end{proof}

\subsection{Relating cycles on \texorpdfstring{\(X\)}{X} to cycles on 
\texorpdfstring{\(\Delta\)}{Delta} and \texorpdfstring{\(W\)}{P2}}\label{sec:beauville}

Observe that the map \(\piX\) fails to be smooth along the image of a section \(\delta \colon \Delta \to X\).  We write \(\eps \colon X' \to X\) for the blow-up along \(\delta(\Delta)\).  The proper transform \(S\) of \(X_\Delta\) is a \(\PP^1\)-bundle over \(\Deltatilde\).  The exceptional divisor \(E\) of \(\eps\) meets the surface \(S \) transversely along a section of \(S \xrightarrow{p} \Deltatilde\).
We summarize this setup in the following commutative diagram.
\begin{equation}\label{diagram:beauville-maps}
    \begin{tikzcd}
        S \cap E \arrow[rr, hook] \arrow[dr, hook] \arrow[ddr, equal]  && E \arrow[rd, hook] 
        \arrow[dddl, swap, near start] \\
        & S \arrow[rr, hook, "j", crossing over] \arrow[dr, "\nu", swap, crossing over, near start] \arrow[d, "p", swap] && X' \arrow[d, "\eps"] \\
        & \Deltatilde \arrow[d, "\piDelta", swap] & X_\Delta \arrow[dl, "\pi|_{X_\Delta}", near start, shift left=1] \arrow[r, hook, "r'"] & X \arrow[d, "\piX"] \\
        & \Delta \arrow[ur, hook, "\delta", shift left] \arrow[rr, hook, "r", near end] && W
    \end{tikzcd}
\end{equation}
\noindent Of particular interest will be the maps
\begin{equation}\label{eq:pje-ejp}
p_*j^*\eps^* \colon \CH^2\Xbar \to \CH^1\Deltatilde_{\kbar} \quad\text{and}\quad \eps_*j_*p^* \colon  \CH^1\Deltatilde_{\kbar} \to \CH^2\Xbar
\end{equation}
where \(\eps^*\colon\CH^2\Xbar\to\CH^2 X'_{\kbar}\) and \(j^*\colon\CH^2 X'_{\kbar}\to\CH^2 S_{\kbar}\) are the refined Gysin homomorphisms~\cite{Fulton-IsectBook}*{Section 6.6}, and \(p_*\colon\CH^2 S_{\kbar}\to\CH^1\Deltatilde_{\kbar}\) is the pushforward.

\begin{lemma}\label{lem:pullback_pushforward}\hfill
\begin{enumerate}
    \item\label{X} For any class \(\alpha \in \CH^2\Xbar\), we have
        \(\piDelta_* (p_* j^* \eps^* \alpha) = r^*\piX_* \alpha.\)
    \item\label{delta} For any class \(\beta \in \CH^1\Delta_{\kbar}\), we have
        \(\eps_* j_* p^* (\piDelta^* \beta) = \piX^*r_* \beta.\)
\end{enumerate}
\end{lemma}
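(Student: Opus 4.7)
The plan is to verify both identities by a uniform strategy: reduce via linearity and Chow's moving lemma to cycles represented by integral subvarieties in sufficiently general position, and then exploit the two compatibilities
\[
    \piX \circ r' \circ \nu = r \circ \piDelta \circ p \quad\text{and}\quad \eps \circ j = r' \circ \nu
\]
that are evident from diagram~\eqref{diagram:beauville-maps}. In both cases, once the cycles are in good position, the refined Gysin homomorphisms reduce to naive schematic intersections, and the identities follow from these compatibilities by chasing the diagram.

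For part~\eqref{X}, I would take \(\alpha = [Z]\) for an integral curve \(Z \subset \Xbar\), and use Chow's moving lemma on the smooth projective threefold \(\Xbar\) to replace \(Z\) by a rationally equivalent representative that is disjoint from the one-dimensional locus \(\delta(\Deltabar)\) and meets \(X_{\Deltabar}\) properly (i.e., in a reduced zero-cycle in its smooth locus, if at all). I would then split into two cases. If \(\piX(Z)\) is a point, moving arranges \(\piX(Z)\notin\Deltabar\), so \(Z\) is disjoint from \(X_{\Deltabar}\); both \(r^*\piX_*[Z]\) and \(\pje[Z]\) vanish. If instead \(C := \piX(Z)\) is a curve with \(d := [\kk(Z):\kk(C)]\), then \(r^*\piX_*[Z] = d\cdot[C\cdot\Deltabar]\) as a zero-cycle on \(\Deltabar\). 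For the other side, since \(Z\) avoids \(\delta(\Deltabar)\), \(\eps\) is an isomorphism on a neighborhood of \(Z\), so \(\eps^*[Z]=[\tilde Z]\) lies in \(X'_{\kbar}\setminus E\); under the identification \(S\setminus(S\cap E)\xrightarrow{\nu} X_{\Deltabar}\setminus\delta(\Deltabar)\), the refined intersection \(j^*[\tilde Z]\) corresponds to the transverse zero-cycle \(Z\cap X_{\Deltabar}\). Pushing forward along \(\piDelta\circ p = \piX|_{X_{\Deltabar}}\circ\nu\) then gives \(\piDelta_*\pje[Z] = \piX_*(Z\cap X_{\Deltabar}) = d\cdot[C\cdot\Deltabar]\), matching the other side.

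For part~\eqref{delta}, I would use linearity to reduce to \(\beta=[q]\) for a closed point \(q\in\Deltabar\). Since \(\piDelta\) is \'etale, \(\piDelta^*[q] = [\tilde q_1]+[\tilde q_2]\) is the sum over the two geometric preimages; flat pullback along the \(\PP^1\)-bundle \(p\) produces the sum of two fiber classes \([F_1]+[F_2]\) in \(\CH^1 S_{\kbar}\). Since \(\eps\circ j = r'\circ \nu\) and \(\nu\) sends each \(F_i\cong\PP^1\) isomorphically onto one of the two components \(L_i\) of the rank-two conic fiber \(X_q\subset\Xbar\), we obtain
\[
    \ejp\,\piDelta^*[q] = [L_1]+[L_2] = [X_q] = \piX^*[q] = \piX^* r_*[q],
\]
as desired.

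The main technical point will be in part~\eqref{X}: carefully justifying that Chow's moving lemma on \(\Xbar\) simultaneously arranges both properness of \(Z\cap X_{\Deltabar}\) in the smooth locus and the disjointness \(Z\cap\delta(\Deltabar)=\emptyset\), and then verifying that in this good-position setting the refined Gysin pullback \(j^*\eps^*\) truly reduces to transplanting the naive intersection \(Z\cap X_{\Deltabar}\) to \(S\) via \(\nu^{-1}\). Once these standard points are confirmed, both identities follow immediately from the diagram chase described above.
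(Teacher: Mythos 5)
Your proposal is correct, but it takes a genuinely different route from the paper. The paper's proof is a one-sentence citation of three general compatibility statements in Fulton's \emph{Intersection Theory}: commutation of flat pullback with proper pushforward in a fiber square (Proposition 1.7, which handles part~(2) via \(\piX^*r_* = r'_*(\piX|_{X_\Delta})^*\)), commutation of the refined Gysin map \(r^!\) with proper pushforward (Theorem 6.2(a), which handles part~(1) via \(r^!\piX_* = (\piX|_{X_\Delta})_*r^!\)), and the blow-up formula (Proposition 6.7(b), which identifies \(r^!\alpha\) with \(j^*\eps^*\alpha\) transported through \(S\)). You instead verify the identities on cycle representatives in general position; this is legitimate because both sides are compositions of operations already known to descend to rational equivalence, so it suffices to check them on a well-chosen representative. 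Your part~(2) is exactly right: the fibers of \(\piX\) over \(\Delta\) are reduced rank-\(2\) conics, so \(\nu_*(F_1+F_2)=[L_1]+[L_2]=[X_q]=\piX^*[q]\). In part~(1) the step \(\piX_*(Z\cap X_\Delta)=d\,[C\cdot\Delta]\) is the projection formula \(\piX_*(Z\cdot\piX^*\Delta)=(\piX_*Z)\cdot\Delta\), which is itself one of the Fulton compatibilities the paper cites, so your argument is not really avoiding them, only making them concrete. Two small cautions: Chow's moving lemma gives proper (correct-dimensional) intersection with \(X_\Delta\) and disjointness from \(\delta(\Delta)\), but not reducedness of \(Z\cap X_\Delta\); since the projection formula holds with multiplicities you should simply drop the word ``reduced'' rather than try to justify it. Also, a fibral component over a point of \(\Delta\) lies inside \(X_\Delta\) and cannot be ``moved to another fiber''; the correct statement is that after moving, each integral component either dominates a curve not contained in \(\Delta\) or maps to a point off \(\Delta\), and your two cases then apply componentwise. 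What the paper's route buys is brevity and uniformity; what yours buys is an explicit description of the zero-cycle \(\pje\alpha\) on \(\Deltatilde\), which is in the spirit of how this map is used later in the paper.
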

\begin{proof}
    We will use that the lower right square in \eqref{diagram:beauville-maps} is Cartesian.
    Using that \(r\) is a regular embedding and \(\nu\) is monoidal, we have
    \begin{align*}
        r^*\piX_* \alpha &= (\piX|_{X_\Delta})_*r'^*\alpha &\text{\cite{Fulton-IsectBook}*{Theorem 6.2(a)}}\\
        &= (\piX|_{X_\Delta})_*\nu_*\nu^*r'^*\alpha &\text{\cite{Fulton-IsectBook}*{Proposition 6.7(b)}}\\
        &= (\piDelta \circ p)_*( \eps \circ j)^*\alpha. 
    \end{align*}
    
    \noindent In the other direction, using that \(r\) is proper, \(\pi\) is flat and \(\nu\) is monoidal, we have
    \begin{align*}
        \piX^*r_* \beta &= r'_*(\piX|_{X_\Delta})^*\beta &\text{\cite{Fulton-IsectBook}*{Proposition 1.7}}\\
        &= r'_*\nu_*\nu^*(\piX|_{X_\Delta})^*\beta &\text{\cite{Fulton-IsectBook}*{Proposition 6.7(b)}} \\
        &= (\eps \circ j)_*(\piDelta \circ p)^* \beta. \qedhere
    \end{align*}
\end{proof}

Recall that \(\iota \colon \Deltatilde \to \Deltatilde\) is the (geometrically) fixed-point-free involution whose quotient is \(\Delta\).  A key input to Beauville's computation of the intermediate Jacobian of \(X\) is the following.

\begin{lemma}[\citelist{\cite{beauville-ij}*{Chap. III}\cite{BeltramettiFrancia83}*{Claim 2}}]\label{lem:BeauvilleGroupLem}
Given a class \(x \in \CH^1\Deltatilde_{\kbar}\), we have
    \[
        p_*j^*\eps^*(\eps_*j_*p^*(x)) = (\iota_* - 1) x. \hfill \qed
    \]
\end{lemma}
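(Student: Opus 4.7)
My plan is to reduce the identity to a geometric computation on a generator of $\CH^1(\Deltatilde_{\kbar})$. Since this group is generated as an abelian group by classes $[\tilde q]$ of geometric points $\tilde q \in \Deltatilde(\kbar)$, by linearity it suffices to verify
\[
    p_*j^*\eps^*\eps_*j_*p^*[\tilde q] = [\iota(\tilde q)] - [\tilde q]
\]
for each such $\tilde q$; set $q := \piDelta(\tilde q)$.

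First I would trace $[\tilde q]$ through the outer $\eps_*j_*p^*$: the pullback $p^*[\tilde q]$ is the class of the $\PP^1$-fiber $F_{\tilde q} \subset S$, and $\eps \circ j$ maps $F_{\tilde q}$ isomorphically onto the geometric component $L_{\tilde q}$ of the singular conic $\piX^{-1}(q)$ indexed by $\tilde q$. Since $L_{\tilde q}$ meets the blow-up center $\delta(\Delta)$ transversely at the single node $\delta(q)$, the preimage under $\eps$ decomposes, each summand with multiplicity one, as
\[
    \eps^*[L_{\tilde q}] = [F_{\tilde q}] + [E_q] \in \CH^2(X'_{\kbar}),
\]
where $E_q \subset E$ is the exceptional $\PP^1$ over $\delta(q)$.

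Next I would apply the refined Gysin map $j^*$ to each summand. The curve $E_q$ is not contained in $S$ and meets $S$ transversely along the two points $c_{\tilde q}, c_{\iota(\tilde q)}$ of $C := E \cap S$ lying above $q$ (since $C$ is a section of $p$ and $\piDelta$ has degree $2$); thus $j^*[E_q] = [c_{\tilde q}] + [c_{\iota(\tilde q)}]$ and $p_*j^*[E_q] = [\tilde q] + [\iota(\tilde q)]$. For the summand $[F_{\tilde q}]$, which is already supported on $S$, the self-intersection formula yields $j^*[F_{\tilde q}] = c_1(N_{S/X'}) \cap [F_{\tilde q}]$. To compute the degree of this line bundle on the fiber $F_{\tilde q}$, I would use the identity
\[
    [S] = \eps^*[X_{\Delta}] - 2[E] \in \Pic X',
\]
in which the multiplicity $2$ records the ordinary double points of $X_{\Delta}$ along $\delta(\Delta)$. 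Restricting to $S$: the first term is pulled back from $\Deltatilde$ via $p$ (since $X_\Delta = \piX^{-1}(\Delta)$) and so has degree $0$ on $F_{\tilde q}$, while $\OO_{X'}(E)|_S \cong \OO_S(C)$ has degree $1$ on $F_{\tilde q}$ because the section $C$ meets each fiber of $p$ once. Therefore $\deg_{F_{\tilde q}} N_{S/X'} = -2$, and $p_*j^*[F_{\tilde q}] = -2[\tilde q]$.

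Combining the two contributions gives $-2[\tilde q] + [\tilde q] + [\iota(\tilde q)] = [\iota(\tilde q)] - [\tilde q] = (\iota_* - 1)[\tilde q]$, as desired. The main obstacle is the normal bundle computation, and specifically the local verification that $E$ appears with multiplicity exactly $2$ in $\eps^*[X_{\Delta}]$. This reduces to a local analytic statement at $\delta(q)$: in suitable coordinates, $\delta(\Delta) \subset X$ is a smooth axis along which $X_{\Delta}$ has analytic form $uv=0$ in the transverse slice, so blowing up $X$ along this curve introduces the exceptional $\PP^1$-bundle with multiplicity two in the total transform of $X_{\Delta}$. All remaining steps are then transversality statements and applications of the projection formula.
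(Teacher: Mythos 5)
Your proof is correct. Note that the paper itself gives no argument for this lemma: it is stated with a \(\qed\) and delegated entirely to the citations (Beauville, Chap.~III, and Beltrametti--Francia, Claim~2), so there is no internal proof to compare against. What you have written is, in substance, the classical computation from those sources: reduce by linearity to a point class \([\tilde q]\), identify \(\eps_*j_*p^*[\tilde q]\) with the line component \(L_{\tilde q}\) of the singular conic over \(q=\piDelta(\tilde q)\), split \(\eps^*[L_{\tilde q}]=[F_{\tilde q}]+[E_q]\) using transversality of \(L_{\tilde q}\) with the blow-up center at the node, and then compute the two Gysin contributions. The two multiplicity checks you flag are exactly the right ones and both come out as you say: \(X_\Delta\) is locally \(uv=0\) along \(\delta(\Delta)\), so \([S]=\eps^*[X_\Delta]-2[E]\), and since \(\OO_X(X_\Delta)=\piX^*\OO_W(\Delta)\) restricts trivially to a fiber of \(p\) while \(E\cap S\) is a section of \(p\), one gets \(\deg_{F_{\tilde q}}N_{S/X'}=-2\); combined with \(p_*j^*[E_q]=[\tilde q]+[\iota(\tilde q)]\) (the two points of the double section \(E\cap S\cong\Deltatilde\) over \(q\)) this yields \((\iota_*-1)[\tilde q]\). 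This is a complete and correct substitute for the citation.
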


\begin{prop}\label{prop:CH2-to-PPrym}
The map
\[\CH^2\Xbar \xrightarrow{\;(\pje, \pi_*)\;} \bPic_{\Deltatilde/k}(\kbar) \times \Pic W_{\kbar}\]
is a \(G_k\)-equivariant group homomorphism with image contained in \(\PPrym_{\Deltatilde/\Delta}^{\bPic_{W/k}}(\kbar)\).  
\end{prop}

\begin{proof}
Given a class \(\alpha \in \CH^2\Xbar\), by Lemma~\ref{lem:pullback_pushforward}, we have \(\piDelta_*p_*j^*\eps^* \alpha = r^*\piX_* \alpha\). Hence \((\pje\alpha, \pi_*\alpha)\) is a point of the \((\bPic_{W/k})\)-polarized Prym scheme.  Since the maps \(p\), \(j\), \(\eps\), and \(\pi\) are defined over \(k\), this map is \(G_k\)-equivariant.
\end{proof}

\subsection{Identifying the intermediate Jacobian with the Prym variety}\label{sec:scheme_map}

Using our results from Section~\ref{sec:BWreview}, we can upgrade the maps of the previous section to natural transformations of functors.

\begin{lemma}\label{lem:ejp-pje}
    There exist natural transformations of functors
\[
    \pje \colon \CH^2_{X/k, \fppf} \to \Pic_{\Deltatilde/k, \fppf}, \qquad
    \ejp \colon \Pic_{\Deltatilde/k, \fppf} \to \CH^2_{X/k, \fppf}
\]
    extending the homomorphisms in \eqref{eq:pje-ejp} on \(\kbar\)-points.  When \(X\) is geometrically rational, these give rise to morphisms of \(k\)-group schemes 
\[    
\pje  \colon \bCH2_{X/k} \to \bPic_{\Deltatilde/k}, \qquad 
    \ejp \colon \bPic_{\Deltatilde/k} \to \bCH2_{X/k}.
\]
    In this case, there is also a morphism \(\pi_* \colon \bCH2_{X/k} \to \bPic_{W/k}\) and the product
    \[\bCH2_{X/k} \xrightarrow{\;(\pje, \pi_*)\;} \bPic_{\Deltatilde/k} \times \bPic_{W/k}\]
    has image contained in \(\PPrym_{\Deltatilde/\Delta}^{\bPic_{W/k}}\).
\end{lemma}
\begin{proof}
    The existence of the natural transformations/morphisms \(\ejp\) and \(\pje\) follows by composing the natural transformations defined in Propositions~\ref{prop:epsilon}, \ref{prop:jp}, and \ref{prop:pj}, which is possible since \(\eps\) is birational, \(p\) is smooth, and \(j\) is a regular embedding.  Existence of the morphism \(\pi_*\) follows from Proposition~\ref{prop:pi}.

    Finally, since \(\bCH2_{X/k}\) is a smooth group scheme, it suffices to check the image of \((\pje, \pi_*)\) on \(\kbar\)-points, which is Proposition~\ref{prop:CH2-to-PPrym}.
\end{proof}

Beauville showed that, over algebraically closed fields of characteristic not \(2\), the Prym variety \(\bPrym_{\Deltatilde/\Delta}\) with the homomorphism \((\iota_*-1)\circ(\ejp)^{-1}\) of Section~\ref{sec:beauville} is the algebraic representative for \((\CH^2 X_\kbar)^0\) (i.e., Murre's definition of the intermediate Jacobian \cite{Murre-applications-K-theory}) \cite{beauville-ij}*{Proposition 3.3}.  We build on Beauville's work to extend the result to over any field of characteristic different from \(2\) (not necessarily algebraically closed).  In addition, when \(X\) is geometrically rational, we show that \((\bCH2_{X/k})^0\) is isomorphic to the Prym variety.
\begin{thm}\label{thm:jac0}
    Let \(k\) be a field of characteristic different from \(2\). The homomorphism \(\ejp\) of Lemma~\ref{lem:ejp-pje}  induces a \(G_k\)-equivariant group isomorphism
    \[\bPrym_{\Deltatilde/\Delta}(\kbar)\xrightarrow{\;\ejp\circ(\iota_* - 1)^{-1}} (\CH^2\Xbar)^0.\] 
    Here \((\iota_*-1)^{-1}\) denotes the inverse of the isomorphism \(\bPic^0_{\Deltatilde/k}/\piDelta^*\bPic^0_{\Delta/k}\simeq\bPrym_{\Deltatilde/\Delta}\) induced by \(\iota_*-1\) (Lemma~\ref{lem:P-Ptilde-characterization}).
    
    Moreover, if \(X\) is geometrically rational, 
    this induces  an isomorphism    
    \[\bPrym_{\Deltatilde/\Delta}\xrightarrow{\;\ejp\circ(\iota_* - 1)^{-1}} (\bCH2_{X/k})^0\] 
    of principally polarized abelian varieties.
\end{thm}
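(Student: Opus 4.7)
The strategy is to first establish the $\kbar$-points assertion by showing that $\ejp$ kills the subgroup $\piDelta^*\bPic^0_{\Delta/k}(\kbar)$ and invoking Beauville's theorem identifying the Prym as the algebraic representative, then to promote this to the scheme-level statement using Proposition~\ref{prop:ejp-SchemeVersion}.

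First, I would verify that $\ejp(\piDelta^*\beta)=0$ in $\CH^2\Xbar$ for every $\beta\in\bPic^0_{\Delta/k}(\kbar)$. By Lemma~\ref{lem:pullback_pushforward}\eqref{delta}, $\ejp(\piDelta^*\beta)=\piX^*r_*\beta$, and $r_*\beta$ is a $0$-cycle of degree $0$ on $\Wbar$; since $\Wbar$ is rational, such a $0$-cycle is rationally trivial, so $r_*\beta=0$ in $\CH^2\Wbar$ and hence $\piX^*r_*\beta=0$. Combined with the identification $\bPrym_{\Deltatilde/\Delta}(\kbar)\cong\bPic^0_{\Deltatilde/k}(\kbar)/\piDelta^*\bPic^0_{\Delta/k}(\kbar)$ via $\iota_*-1$ from Lemma~\ref{lem:P-Ptilde-characterization}, this shows $\ejp\circ(\iota_*-1)^{-1}$ is a well-defined homomorphism $\bPrym_{\Deltatilde/\Delta}(\kbar)\to(\CH^2\Xbar)^0$. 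This map is the inverse of the isomorphism that Beauville~\cite{beauville-ij}*{Proposition 3.3} exhibits as the algebraic representative of $(\CH^2\Xbar)^0$, and his proof (whose key input is Lemma~\ref{lem:BeauvilleGroupLem}) establishes it as a group isomorphism. The $G_k$-equivariance is immediate since $\piDelta$, $\iota$, $\eps$, $j$, and $p$ in diagram~\eqref{diagram:beauville-maps} are all defined over $k$.

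For the scheme-theoretic statement, assume $X$ is geometrically rational and use the scheme-level map $\ejp\colon\bPic_{\Deltatilde/k}\to\bCH2_{X/k}$ from Proposition~\ref{prop:ejp-SchemeVersion}. The analogous vanishing holds at the scheme level: the composite $\piX^*\circ r_*\colon\bPic^0_{\Delta/k}\to(\bCH2_{X/k})^0$ is a homomorphism between smooth connected group schemes over $k$ that is zero on $\kbar$-points (by the geometric argument above), so it is identically zero. Hence $\ejp$ factors through a morphism $\Phi\colon\bPrym_{\Deltatilde/\Delta}\to(\bCH2_{X/k})^0$, which is a homomorphism of abelian varieties of the same dimension (by the Prym dimension formula and~\cite{bw-ij}*{Theorem 3.1}) and is bijective on $\kbar$-points by the first part. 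To conclude that $\Phi$ is an isomorphism of \emph{principally polarized} abelian varieties, I would show that $\Phi$ pulls back the Benoist--Wittenberg polarization on $(\bCH2_{X/k})^0$ to the canonical principal polarization on $\bPrym_{\Deltatilde/\Delta}$; over $\kbar$ this is Beauville's compatibility result, and since $\Phi$ and both polarizations are defined over $k$, the compatibility descends to $k$. A homomorphism of abelian varieties preserving principal polarizations is automatically an isomorphism, which incidentally sidesteps any potential purely inseparable isogeny behavior in positive characteristic.

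The main obstacle I anticipate is the final compatibility of principal polarizations, which requires comparing Beauville's explicit cycle-theoretic polarization on $(\CH^2\Xbar)^0$ to the Benoist--Wittenberg polarization on $(\bCH2_{X/k})^0$. While likely standard, this may require a careful unwinding of the constructions in~\cite{bw-ij} to confirm they agree on $\kbar$-points.
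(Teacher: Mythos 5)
Your proposal is correct in outline and follows the paper's strategy closely: the kernel computation via Lemma~\ref{lem:pullback_pushforward} and the vanishing of \((\CH^2\Wbar)^0\), the appeal to Beauville on \(\kbar\)-points, and the upgrade to group schemes via Proposition~\ref{prop:ejp-SchemeVersion} all match the paper's proof. Two remarks. First, you only verify the containment \(\piDelta^*\bPic^0_{\Delta/k}(\kbar)\subseteq\ker(\ejp)\); injectivity needs the reverse containment, which is immediate from Lemma~\ref{lem:BeauvilleGroupLem} (if \(\ejp(x)=0\) then \((\iota_*-1)x=\pje(\ejp(x))=0\), so \(x\in\piDelta^*\Pic^0\Delta_{\kbar}\) by Lemma~\ref{lem:P-Ptilde-characterization}), and surjectivity requires checking that Beauville's argument, written for \(W=\PP^2\), extends to any smooth surface with \((\CH^qW_{\kbar})^0=0\) for all \(q\) --- the paper records both points explicitly, and since you cite a result proved only for \(\PP^2\) you should too. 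Second, your route to the scheme-level statement (equal dimensions, bijectivity on \(\kbar\)-points, and compatibility of principal polarizations) is viable in principle but stalls at exactly the step you flag as the main obstacle, namely comparing Beauville's polarization with the Benoist--Wittenberg one. The paper avoids this comparison entirely: it observes that both \(\bigl(\bPrym_{\Deltatilde/\Delta},\,(\iota_*-1)\circ(\ejp)^{-1}\bigr)\) and \(\bigl((\bCH2_{X_{\kbar}/\kbar})^0,\,\psi^2_X\bigr)\) are the algebraic representative of \((\CH^2X_{\kbar})^0\) --- the former by Beauville's proof of his Proposition 3.3 (which again only needs \((\CH^qW_{\kbar})^0=0\)), the latter by \cite{bw-ij}*{Theorem 3.1(vi)} --- so the isomorphism over \(\kbar\) comes from the universal property of the algebraic representative, and the polarizations automatically agree because the Benoist--Wittenberg polarization is by construction the one induced from the algebraic representative over \(\kperf\). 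Adopting this argument dissolves your remaining obstacle rather than forcing you to unwind the two polarization constructions.
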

\begin{proof}
    First we show that the kernel of \(\ejp \) has the same \(\kbar\)-points as \(\piDelta^*\bPic^0_{\Delta/k}\).
    By Lemma~\ref{lem:BeauvilleGroupLem}, the kernel of \(\eps_*j_*p^* \colon \Pic\Deltatilde_{\kbar} \to \CH^2\Xbar\) is contained in that of \(\iota_* - 1\colon \Pic\Deltatilde_{\kbar} \to \Pic\Deltatilde_{\kbar}\), which, by Lemma~\ref{lem:P-Ptilde-characterization} is equal to \(\piDelta^*\Pic^0\Delta_{\kbar}\).
    For the opposite containment, we need to show that \(\ejp(\piDelta^*\Pic^0\Delta_\kbar) = 0\) in \(\CH^2X_\kbar\).  Note that \((\ejp)\circ\piDelta^* = \piX^*r_*\) by Lemma \ref{lem:pullback_pushforward}, but, since \((\CH^2\Wbar)^0 = 0\), the map \(r_*\) is identically \(0\) on \(\Pic^0\Delta_\kbar\).
    
    Hence \(\piDelta^*\bPic^0_{\Delta/k}\) is the maximal reduced subscheme of the kernel and \(\ejp\) induces a homomorphism \(\bPic^0_{\Deltatilde/k}/\piDelta^*\bPic^0_{\Delta/k}(\kbar)\to(\CH^2\Xbar)^0\), and, if \(X\) is geometrically rational, a morphism  \(\bPic^0_{\Deltatilde/k}/\piDelta^*\bPic^0_{\Delta/k}\to(\bCH2_{X/k})^0\). Thus, precomposing with the isomorphism \((\iota_*-1)^{-1}\) yields a group homomorphism \(\bPrym_{\Deltatilde/\Delta}(\kbar)\to(\CH^2\Xbar)^0\), and, if \(X\) is geometrically rational, a morphism of abelian varieties \(\bPrym_{\Deltatilde/\Delta}\to(\bCH2_{X/k})^0\). We claim these are both isomorphisms.
    For the group homomorphism, it suffices to show surjectivity. For this, the proof of surjectivity in \cite{beauville-ij}*{Th\'eor\`eme 3.1} holds when \(\PP^2\) is replaced by a smooth surface \(W_\kbar\) with \((\CH^q W_\kbar)^0=0\) for all \(q\).

To check that \(\bPrym_{\Deltatilde/\Delta}\to(\bCH2_{X/k})^0\) is an isomorphism of abelian varieties, it suffices to base change to \(\kbar\). Since both 
\[
    \begin{array}{rccrcll}
        \big( & \bPrym_{\Deltatilde/\Delta} &, \;&
(\iota_*-1)\circ(\ejp)^{-1} & \colon & (\CH^2X_\kbar)^0\to\bPrym_{\Deltatilde/\Delta}(\kbar)&\big), \quad \textup{and}\\
\big( &(\bCH2_{X_\kbar/\kbar})^0 &, \;& \psi^2_X &\colon & (\CH^2X_\kbar)^0\to (\bCH2_{X_\kbar/\kbar})^0(\kbar) & \big)
    \end{array}
\]
are the algebraic representative for algebraically trivial codimension two cycles on \(X_\kbar\) by \cite{beauville-ij}*{Proof of Proposition 3.3} (which only requires that \((\CH^q W_{\kbar})^0=0\) for all \(q\)) and \cite{bw-ij}*{Theorem 3.1(vi)}, the claim follows from the universal property \cite{beauville-ij}*{Definition 3.2.3} of the algebraic representative. The isomorphism \(\bPrym_{\Deltatilde/\Delta}\simeq(\bCH2_{X/k})^0\) respects the principal polarizations because the principal polarization on \((\bCH2_{X/k})^0\) is induced by the one on the algebraic representative over \(\kperf\) \cite{bw-ij}*{Theorem 3.1(vi)}.
\end{proof}

    \subsection{Proof of Theorem~\ref{thm:IJTtorsorsOfConicBundles}}\label{sec:proof_IJT}
    
    By Proposition~\ref{prop:CH2-to-PPrym}, the map \((\pje, \pi_*)\) gives a Galois-equivariant group homomorphism from \(\CH^2 \Xbar\) to \(\PPrym^{\bPic_{W/k}}_{\Deltatilde/\Delta}(\kbar)\).  We will prove that this is the desired map in Theorem~\ref{thm:IJTtorsorsOfConicBundles}\eqref{part:IsoOnkbarPts}.
    In addition, combining Theorem \ref{thm:jac0}, Lemma \ref{lem:P-Ptilde-characterization} and Lemma \ref{lem:BeauvilleGroupLem}, we have the diagram
    \begin{center}
        \begin{tikzcd}[column sep = huge]
            P(\kbar) \arrow{r}{(\iota_*-1)^{-1}}[below]{\sim} \arrow[bend right = 15, equal]{rrr} & \frac{\bPic^0\Deltatilde}{\piDelta^*\bPic^0\Delta}(\kbar) \arrow{r}{\ejp}[below]{\sim} & (\CH^2 \Xbar )^0  \arrow[r, "{(\pje,\, \pi_*)}"] & P(\kbar),
        \end{tikzcd}
    \end{center}
    which
    shows that the map \((\pje, \pi_*)\) is an isomorphism on \(\kbar\)-points from \((\CH^2 \Xbar )^0\) onto the identity component \(\bPrym_{\Deltatilde/\Delta}\subset \PPrym_{\Deltatilde/\Delta}^{\bPic_{W/k}}\). 
        Hence, for arbitrary \(\gamma\in (\NS^2X_{\kbar})^{G_k}\), the map \((\pje, \pi_*)\) yields a \(G_k\)-equivariant isomorphism from \((\CH^2 \Xbar)^\gamma\) to the \(\kbar\)-points of one of the two geometric irreducible components of \(V_{\piX_*\gamma}\) by Lemma~\ref{lem:pullback_pushforward}\eqref{X}. Note that since \(\gamma\in (\NS^2X_{\kbar})^{G_k}\) and this isomorphism is \(G_k\)-equivariant, we deduce that a (and hence both) geometric irreducible component(s) of \(V_{\piX_*\gamma}\) must be defined over \(k\).
    Since \((\pje, \pi_*)\) is a group homomorphism, this isomorphism is compatible with actions of the identity components.  
    
    To conclude~\eqref{part:IsoOnkbarPts} it remains to prove surjectivity of \((\pje, \pi_*)\).  Since each split connected component of \(\PPrym_{\Deltatilde/\Delta}^{\bPic_{W/k}}\) is a torsor under \(\bPrym_{\Deltatilde/\Delta}\) and we have already shown that \(\bPrym_{\Deltatilde/\Delta}(\kbar)\) is in the image of \(\pje\), it suffices to show that for each connected component of \(\PPrym_{\Deltatilde/\Delta}^{\bPic_{W/k}}\), there is a \(\gamma\in \CH^2\Xbar\) such that \((\pje\gamma, \pi_*\gamma)\) lies in that component.
    Since \(W\) is geometrically rational, \(\NS{}^1 \Wbar = \Pic \Wbar\) and so Proposition~\ref{prop:NS2}\eqref{part:surjectivity}  implies that for all \(D\in \Pic \Wbar\), there exists \(\gamma \in \CH^2 \Xbar\) such that  \(\pi_* \gamma =D\).  Further, since \(\HH^1(G_L, \Z\gammatilde_0) = 0\) for any subgroup \(G_L\subset G_k\), if \(D\) is fixed by a subgroup \(G_L\) then \(\gamma\) can be chosen to be fixed by that subgroup as well.  Hence, by Lemma~\ref{lem:pullback_pushforward}\eqref{X}, we have \((\pje \gamma, \pi_*\gamma) \in V_D\), and at least one of the two connected components of \(V_{D}\) is contained in the image of \(\pje\). Further, since \(\pje\) is a group homomorphism, if both \(P\) and \(\Ptilde\) are contained in the image of \(\pje\), then the same is true for all \(D \in \Pic \Wbar\).
    
    Let \(\tilde{D}\in \Pic \Deltatilde_{\kbar}\).  By definition of \(p,j,\eps\), we have that \(\ejp \tilde{D}\) is a fibral curve for \(\piX\), so by Proposition~\ref{prop:NS2}\eqref{part:fibral}, \(\ejp \tilde{D}\algequiv (\deg \tilde{D})\gammatilde_0\).  By Lemma~\ref{lem:BeauvilleGroupLem}, \(\pje(\ejp \tilde{D})=(\iota_*-1)\tilde{D}\)  and so by  Lemma~\ref{lem:P-Ptilde-characterization}, \(\pje(\ejp \tilde{D})\) lands in \(P\) or \(\Ptilde\) depending on the parity of \(\deg\tilde{D}\). This completes the proof of~\eqref{part:IsoOnkbarPts} and also proves~\eqref{part:gamma0tilde}.
    
    Now we assume that \(X\) is geometrically rational and prove~\eqref{part:IsoAsTorsors}.  The existence of the morphism of group schemes \((\pje, \pi_*)\) follows from Lemma~\ref{lem:ejp-pje}.  Surjectivity and the fact that it is an isomorphism on connected components follows from the Galois equivariance together with the properties on \(\kbar\)-points in \eqref{part:IsoOnkbarPts}.  The explicit result about \((\bCH2_{X/k})^{n \gammatilde_0}\) follows from ~\eqref{part:gamma0tilde}.\qed

\subsection{Explicit computation of  \texorpdfstring{\(\pje\)}{pje} for certain curves in \texorpdfstring{\(X\)}{X}}\label{subsec:Explicitpje}

\begin{lemma}\label{lem:RelativeSym}
    The \(m\)-fold relative symmetric product \(\Sym_{\Delta}^m\Deltatilde := \left(\Deltatilde\times_{\Delta}\Deltatilde \times_{\Delta}\cdots \times_{\Delta}\Deltatilde\right)/S_m\) has \(\lfloor\frac{m}{2}\rfloor + 1\) irreducible components that are given by the images of the morphisms \(\phi_i\):
    \begin{align*}
        (\underbrace{\id, \cdots, \id}_{m-i}, \underbrace{\iota, \cdots, \iota}_{i})&\colon \Deltatilde \to \Sym_{\Delta}^m\Deltatilde, \quad 0\leq i < \frac{m}{2},\\ 
        (\underbrace{\piDelta^*, \cdots, \piDelta^*}_{\frac{m}{2}})&\colon \Delta \to \Sym_{\Delta}^m\Deltatilde,\quad i = \frac{m}{2} \textup{ and }m\textup{ even}.
    \end{align*}
\end{lemma}
\begin{proof}
    The relative fiber product \(\Deltatilde\times_{\Delta}\Deltatilde\) is isomorphic to two copies of \(\Delta\) given by the maps \((\id,\id)\) and \((\id,\iota)\).  Thus, by induction, the \(m\)-fold relative fiber product \(\Deltatilde\times_{\Delta}\Deltatilde \times_{\Delta}\cdots \times_{\Delta}\Deltatilde\) is isomorphic to \(2^{m-1}\) copies of \(\Deltatilde\) labeled by the length \(m-1\) words in \(\iota\) and \(\id\).  Quotienting by the symmetric group gives the desired statement.
\end{proof}

\begin{prop}\label{prop:pjeExplicit}
    Let \(C\subset W\) be a smooth geometrically integral curve. Then the following properties hold:
    \begin{enumerate}
        \item The conic bundle surface \(X_{C}\to C\) is regular away from \(\delta(\Delta\cap C)\) and for a point \(\pt\in\Delta\cap C\) with \(m_{\pt} := \mult_{\pt}(\Delta\cap C)\), we have that \(\delta(\pt)\subset X_C\) is a \(A_{m_{\pt}-1}\)-singularity.\label{part:Singularities}
        \item Let \(\hat{\pi}_C\colon \hat{X}_C\to C\) be the minimal resolution of \(X_C\to C\), which is obtained by a series of \(\lfloor\frac{m_{\pt}}{2}\rfloor\) blow-ups above each \(\pt \in \Delta \cap C\).  For any point \(\pt\in \Delta\cap C\), there is a unique bijection
        \[\begin{array}{rcl}
        \left\{\begin{array}{l}
            \textup{irreducible components}  \\
            \textup{of the fiber } \hat{\pi}_C^{-1}(\pt) 
        \end{array}\right\} &\longleftrightarrow &
        \left\{\begin{array}{l}
            \textup{closed points}  \\
            \textup{of } (\Sym^{m_{\pt}}_{\Delta}\Deltatilde)_{\pt}
        \end{array}\right\} \\
        \end{array}
        \]
        with the following properties:\label{part:IrredComponents}
        \begin{enumerate}
            \item  for \(i>0\), the exceptional curves from the \(i\)-th blow-up map correspond to closed points in the image of \(\phi_i\), \label{cond:ithblow-up}
            \item for any \(\widetilde{\pt}\in \Deltatilde_{\pt}\), the strict transform of \(\nu_*p^*\widetilde{\pt}\) corresponds to \(\phi_0(\widetilde{\pt})\), and \label{cond:phi0}
            \item if \(m_{\pt}>1\) then for any \(\widetilde{\pt}\in \Deltatilde_{\pt}\), the component corresponding to \(\phi_i(\widetilde{\pt})\) meets only the components corresponding to \(\phi_{i-1}(\widetilde{\pt})\) or \(\phi_{i+1}(\widetilde{\pt})\).\footnote{If \(m_{\pt}=2M\) for some integer \(M\), then \(\phi_{M}(\widetilde{\pt}) = \phi_{M}(\iota(\widetilde{\pt}))\) so this characterization should be taken to mean that the component corresponding to \(\phi_{M}(\widetilde{\pt})\) meets \(\phi_{M - 1}(\widetilde{\pt})\) and \(\phi_{M - 1}(\iota(\widetilde{\pt}))\).} \label{cond:Intersections}
        \end{enumerate}

        \item Let \(\hat{\frakS}\subset \hat{X}_{C}\) be a section of \(\hat{X}_C \to C\) defined over \(k\).
        Since \(\hat{\frakS}\) intersects a unique irreducible component of \(\hat{\pi}_C^{-1}(\pt)\) which must be split over \(\kk(\pt)\), by~\eqref{part:IrredComponents} we obtain, for each \(\pt\in \Delta\cap C\), a point \(D_{\hat{\frakS},\pt}\in \Sym^{m_{\pt}}(\Deltatilde_{\pt})\) of relative degree \(1\), and these points satisfy\label{part:pjeSection}
        \[
            \pje[\im\hat{\frakS}] = \left[\sum_{\pt \in \Delta\cap C} D_{\hat{\frakS},\pt}\right].
        \]
    \end{enumerate}
\end{prop}
\begin{proof}
    \textbf{\eqref{part:Singularities}:} The morphism \(\piX\) is smooth away from \(\delta(\Delta)\), so the same holds when restricting to \(C\).  To understand the structure of \(X_C\) locally around a point \(\delta(\pt)\) for \(\pt\in\Delta\cap C\), we appeal to~\cite{Sarkisov-conic-bundles}*{Proof of Proposition 1.8(5.b)}, which shows that there is an open neighborhood \(\pt\in U\subset W\) such that \(X_U\to U\) is locally given by
    \[
    V(ax^2 + by^2 + cz^2 ) \subset U\times\PP^2_{[x:y:z]}
    \]
    where \(a,b,c\) are functions on \(U\) such that \(V(a) = \Delta\cap U\) and \(b,c\) do not vanish at \(\pt\).  So if \(\pi_{\pt}\) is a local parameter of \(\pt\) in \(C\), then we have that locally around \(\delta(\pt)\), \(X_C\) is defined by \(\pi_{\pt}^{m_{\pt}} = ug_{\pt}\) for some rank \(2\) quadratic form \(g_{\pt}\) and some unit \(u\).  Hence \(\delta(\pt)\subset X_C\) is a \(A_{m_{\pt}-1}\)-singularity (which means that \(\delta(\pt)\) is smooth if \(m_{\pt}=1\)).

    \textbf{\eqref{part:IrredComponents}:} Let \(\pt\in \Delta\cap C\).  If \(m_{\pt}=1\), then the existence of a bijection between these two sets follows from the definition of \(\Deltatilde\), and the uniqueness follows from Condition~\eqref{cond:phi0}.  The other two conditions vacuously hold in this case. Henceforth we assume that \(m_{\pt}>1\) so in particular \(\delta(\pt)\) is singular in \(X_C\).
    
    Over \(\kbar\), any \(A_{n}\) singularity is resolved by \(\lfloor\frac{n+1}{2}\rfloor\) iterated blow-ups with two exceptional curves introduced in the \(i\)-th blow-up for all \(i<\frac{n+1}{2}\), and a unique exceptional curve introduced when \(i = \frac{n+1}{2}\) (which necessarily implies that \(n\) is odd).  Further, at the \(i\)-th stage, the new exceptional curves are introduced in the middle of the chain. Over a non-closed field, the number of irreducible components in the exceptional divisor at each blow-up depends on whether the defining equation modulo \(\mm_{\pt}^3\) factors or not.  In our case, this is equivalent to whether \(X_{\pt}\) has one or two irreducible components or, equivalently, whether \(\Deltatilde_{\pt}\) has one or two irreducible components. Precisely, if \(\Deltatilde_{\pt}\) is irreducible, then the exceptional curve at each stage is irreducible. Since there is a unique exceptional curve from each of the \(\lfloor\frac{m_{\pt}}{2}\rfloor\) blow-ups (and a unique component of \(X_{\pt}\) which we can think of as coming from the \(0\)-th blow-up),  Lemma~\ref{lem:RelativeSym}, and conditions~\eqref{cond:ithblow-up} and~\eqref{cond:phi0} characterize a unique bijection.  Condition~\eqref{cond:Intersections} holds because, as remarked above, the exceptional curves at the \(i\)-th stage appear in the middle of the chain.

    It remains to consider the case when \(m_{\pt}>1\) and \(\Deltatilde_{\pt} = \{\widetilde{\pt}, \iota(\widetilde{\pt})\}\) is reducible.  Then the above discussion implies that there are
    \[
        2\cdot \#\left\{0\leq i < \frac{m_{\pt}}{2}\right\} + \begin{cases} 1 & m_{\pt}\equiv 0 \bmod 2\\
        0 & m_{\pt}\equiv 1 \bmod 2\end{cases}
    \]
    irreducible components of the fiber \(\hat{\pi}_C^{-1}(\pt)\).  Lemma~\ref{lem:RelativeSym} implies that there are the same number of closed points of \((\Sym^{m_{\pt}}_{\Delta}\Deltatilde)_{\pt}\).  Further, the irreducible components of the fiber \(\hat{\pi}_C^{-1}(\pt)\) form a chain, and the ends of the chain correspond to irreducible components of \(X_{\pt}\), which are equal to the strict transforms under \(\hat{X}_C \to X_C\) of the lines \(\nu_*p^*\widetilde{\pt}\) and \(\nu_*p^*\iota(\widetilde{\pt})\).
    Thus, condition~\eqref{cond:phi0} characterizes where the ends of the chain go and condition~\eqref{cond:Intersections} characterizes where the other irreducible components go.  Since the exceptional curves at the \(i\)-th stage appear in the middle of the chain, condition~\eqref{cond:Intersections} implies~\eqref{cond:ithblow-up}.
    
    \textbf{\eqref{part:pjeSection}:} We will use the notation from~\eqref{diagram:beauville-maps}. For \(w\in\Delta\cap C\), let \(E_w\) denote the fiber of \(X'=\Bl_{\delta(\Delta)}X\to X\) over \(\delta(w)\). Let \(\frakS'\) be the proper transform of \(\im\hat\frakS\) in \(X'\). Then 
    \[
    \eps^*[\im\hat{\frakS}] = \frakS' + \sum_{\pt\in\Delta\cap C} n_\pt E_\pt
    \] 
    where each \(n_\pt\) is the intersection multiplicity of \(\delta(\Delta)\cdot\im\hat{\frakS}\) at \(\delta(\pt)\). Note that since \(E\) meets \(S\) transversely, \(p_*j^* E_\pt = \Deltatilde_\pt\) for each \(\pt\in\Delta\cap C\).
    Since the generic point of \(\frakS'\) is not contained in \(S\), \(j^*\frakS' = S\cdot \frakS'\) is an effective \(0\)-cycle on \(S\). Further, since \(\hat{\frakS}\) is a section, \(S\cdot \frakS' = \sum_{\pt\in \Delta\cap C} r_{\pt} s_{\pt}\) for some nonnegative integer \(r_{\pt}\) and some closed \(s_{\pt}\in S_{\pt}\) that has relative degree \(1\) over \(\pt\). (If \(r_{\pt}=0\) then we do not require the existence of such an \(s_{\pt}\).) All together, we may write
    \[
    \pje[\im\hat{\frakS}] = p_*j^*\frakS' + \sum_{\pt\in\Delta\cap C} n_\pt p_*j^*E_\pt = \sum_{\pt\in\Delta\cap C}\left(r_{\pt} p_*s_{\pt} + n_{\pt}\Deltatilde_{\pt}\right).
    \]
    The projection formula together with the equalities \(\eps^*\pi^*\Delta = 2E+S\) and \(E=\eps^*\delta(\Delta)\) yields, 
    \begin{align*}
        \sum_{\pt\in \Delta\cap C} m_\pt \pt= \Delta \cdot \pi_*\eps_*\frakS' 
        & = \pi_*\eps_*((2E+S)\cdot\frakS') \\
       & =  2\pi_*\eps_*(E\cdot\frakS') + \pi_*\eps_*\left(\sum_{\pt\in \Delta\cap C} r_{\pt} s_{\pt}\right)\\
       & =  2\pi_*(\delta(\Delta)\cdot\im\hat{\frakS})+\sum_{\pt\in \Delta\cap C} r_{\pt}\pi_*\eps_* s_{\pt} \\
              & = \sum_{\pt\in \Delta\cap C}\left(2\pi_*n_{\pt}\delta(\pt) + r_{\pt}\pi_*\eps_* s_{\pt} \right) = \sum_{\pt\in \Delta\cap C}\left(2n_{\pt} + r_{\pt}\right)\pt.
    \end{align*} 
    In particular, 
    for each \(\pt\in \Delta\cap C\), we have \(m_{\pt} = 2n_{\pt} + r_{\pt}\), and so
    \[
    \pje[\im\hat{\frakS}] = p_*j^*\frakS' + \sum_{\pt\in\Delta\cap C} n_\pt p_*j^*E_\pt = \sum_{\pt\in\Delta\cap C}\left((m_{\pt} - 2n_{\pt}) p_*s_{\pt} + n_{\pt}\Deltatilde_{\pt}\right).
    \]
    Thus, we have reduced to showing that for each \(\pt\in \Delta\cap C\), we have \(D_{\hat\frakS, \pt} = (m_{\pt} - 2n_{\pt}) p_*s_{\pt} + n_{\pt}\Deltatilde_{\pt}\).
    
    Henceforth, we work with a fixed \(\pt\in \Delta\cap C\). Let \(i\leq\lfloor \frac{m_{\pt}}{2}\rfloor\) be such that \(\hat\frakS\) intersects an exceptional curve from the \(i\)-th blow-up (where \(i=0\) means that \(\hat\frakS\) intersects the strict transform of an irreducible component of \(X_{\pt}\)) in \(\hat{\pi}^{-1}(\pt)\). By properties of blow-ups, \(\im\hat\frakS\subset X\) passes through \(\delta(\pt)\) with multiplicity \(i\), or, in other words, \(n_{\pt}=i\).
    
    If \(\Deltatilde_{\pt}\) is irreducible, then \(S_{\pt}\) contains no closed points of relative degree \(1\) over \(\pt\) and so \(r_{\pt} = m_{\pt} - 2n_{\pt} = m_{\pt} - 2i=0\). Thus we wish to prove that \( D_{\hat\frakS, \pt} = \frac{m_{\pt}}{2}\Deltatilde_{\pt}\).  However, under this irreducibility assumption, \(\frac{m_{\pt}}{2}\Deltatilde_{\pt}\) is the unique point in \((\Sym^{m_{\pt}}_{\Delta}\Deltatilde)_{\pt}\) that has relative degree \(1\) over \(\pt\).  Hence, we have the desired equality.
    
    Henceforth, we assume that \(\Deltatilde_{\pt} = \{\widetilde{\pt}, \iota(\widetilde{\pt})\}\) is reducible.  After possibly interchanging \(\widetilde{\pt}\) and \(\iota(\widetilde{\pt})\), we may assume that \(D_{\hat\frakS, \pt}=\phi_i(\widetilde{\pt}) = (m_{\pt}-i)\widetilde{\pt} + i\iota(\widetilde{\pt})\).  Thus, to prove
    \[
        (m_{\pt}-i)\widetilde{\pt} + i\iota(\widetilde{\pt}) = (m_{\pt} - 2i) p_*s_{\pt} + i\Deltatilde_{\pt},
    \]
    it suffices to show that if \(2i < m_{\pt}\) then \(p_*s_{\pt} = \widetilde{\pt}\), where \(s_{\pt}\) is the unique point in \(\frakS'\cap S\) lying over \(\pt\) (if \(2i = m_{\pt}\) then the above equality holds already).  
    
    Since \(D_{\hat\frakS, \pt}=\phi_i(\widetilde{\pt})\), by~\eqref{cond:Intersections} and~\eqref{cond:phi0}, the component that meets \(\hat\frakS\) is closer to the strict transform of \(\nu_*p^*\widetilde{\pt}\) than it is to the strict transform of \(\nu_*p^*\iota(\widetilde{\pt})\).  Thus, \(\frakS'\) meets \(S_{\widetilde{\pt}}\) and so \(p_*s_{\pt}=\widetilde{\pt}\).
    \end{proof}

\section{Conic bundle threefolds obtained as double covers of \texorpdfstring{\(\PP^1\times \PP^2\)}{P1xP2}}\label{sec:DoubleCover}
    
    A double cover \(\piDoubleCover\colon Y \to \PP^1_{[t_0:t_1]}\times \PP^2_{[u:v:w]}\) branched over a \((2,2)\) divisor is given by 
    \begin{equation}\label{eqn:DoubleCoverConst}  
        z^2 = t_0^2Q_1 + 2t_0t_1Q_2 + t_1^2Q_3,
    \end{equation}
    for some choice of quadrics \(Q_1,Q_2,Q_3\in k[u,v,w]\). Recall from Theorem~\ref{thm:EqnsForCurves} that a triple of quadrics for which \(\Delta=V(Q_1Q_3-Q_2^2)\) is smooth determines an \'etale double cover \(\piDelta\colon \Deltatilde\to \Delta\).  Furthermore, given \(M\in \PGL_2= \Aut(\PP^1)\), changing coordinates on \(\PP^1\) by \(M\) corresponds to replacing the quadratic forms with \(M\cdot(Q_i)\) as explained in Theorem~\ref{thm:EqnsForCurves}\eqref{part:Qis}.  Thus the isomorphism class of this double cover depends only on \(\Deltatilde\to \Delta\) and not the specific choice of the quadratic forms \(Q_i\), and so we denote the double cover given by~\eqref{eqn:DoubleCoverConst} by \(\DoubleCover\).  When the double cover \(\Deltatilde \to \Delta\) is unambiguous, we write \(Y = \DoubleCover\).  This family of Fano threefolds is \textnumero2.18 in the Mori--Mukai classification \cite{mori-mukai}.

    In Section~\ref{sec:DoubleCoverProps}, we prove some basic properties about this double cover, and in Section~\ref{sec:DoubleCoverAndCalQ} we show how these double covers arise naturally from the geometry of \(\Deltatilde\to \Delta\).  In Section~\ref{sec:DoubleCoverIJT}, we determine the N\'eron--Severi group of algebraic curve classes on \(Y\) and combine the results from Sections~\ref{sec:PrymPlaneQuartics} and~\ref{sec:IJ-torsors} to give an extended version (Theorem~\ref{thm:IJ-torsors-double-cover}) of Theorem~\ref{thm:IJTtorsorsOfConicBundles} for this particular family of conic bundles. 
    Lastly, in Section~\ref{sec:IJTvanishingConsequences}, we deduce some consequences of a vanishing IJT obstruction and use these to prove that, over fields \(k\) with \((\Br k)[2] = 0\), the IJT obstruction characterizes rationality for degree \(4\) conic bundles.

\subsection{Properties of the double cover \texorpdfstring{\(\DoubleCover\)}{Y}}\label{sec:DoubleCoverProps}

\begin{prop}\label{prop:DoubleCover-structures}
\hfill
    \begin{enumerate}
        \item The threefold \(Y\) is smooth.
        \item The second projection \(\piX = \pi_2 \colon Y\to\PP^2_{[u:v:w]}\) gives \(Y\) the structure of a conic bundle with discriminant cover \(\piDelta\colon\Deltatilde\to\Delta\), and so \(Y\) is geometrically rational. 
        \item The projection \(\piQuadricSurface\colon Y\to\PP^1_{[t_0:t_1]}\) gives \(Y\) the structure of a quadric surface bundle. In particular, if \(\pi_1\) has a multisection of odd degree, then \(Y\) is rational.\label{part:OddDegreeMultisection}
        \item The subgroup of \(\NS^2 \Ybar\) generated by curves in fibers of \(\pi_1\) is free of rank \(1\) and is generated by the class of a \(\kbar\)-line in a fiber, which we denote \(\gamma_1\).  In particular, \(\NS^2 \Ybar\) is generated by \(\gamma_1\) and \(\tilde{\gamma_0}\), the class of a \(\kbar\)-line contained in a singular fiber of \(\piX\).\label{part:FibralCurves}
        \item If \(\Deltatilde(k)\neq\emptyset\), then \(\pi_1\) has a section and so \(Y\) is \(k\)-rational.
        \label{part:PointOnDeltatilde}
    \end{enumerate}
\end{prop}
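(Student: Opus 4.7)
The plan is to verify the claims in turn, exploiting the two projections and the explicit Bruin equations from Theorem~\ref{thm:EqnsForCurves}\eqref{part:Qis}. For~(1), smoothness follows from the Jacobian criterion applied to the defining equation: a singular point of \(Y\) must project to a singular point of the branch \((2,2)\)-divisor, and analyzing the partial derivatives in \((t_0,t_1)\) forces this image to lie above \(\Delta\), while the partials in \((u,v,w)\) then force \(\Delta\) itself to be singular there, contrary to assumption.

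For the conic bundle structure in~(2), the fiber of \(\piX\) over \((u{:}v{:}w)\) is the conic
\[
    t_0^2 Q_1 + 2 t_0 t_1 Q_2 + t_1^2 Q_3 - z^2 = 0 \; \subset \; \PP^2_{[t_0:t_1:z]},
\]
whose ternary discriminant is \(Q_1 Q_3 - Q_2^2\), identifying the discriminant locus with \(\Delta\). To match the double cover with \(\Deltatilde\to\Delta\), I would use Theorem~\ref{thm:EqnsForCurves}\eqref{part:Qis} to obtain \(Q_1(p) = r^2\), \(Q_2(p) = rs\), \(Q_3(p) = s^2\) above any \(\tilde p\in\Deltatilde\) over \(p\in\Delta\); the fiber over \(p\) then factors as \((t_0 r + t_1 s - z)(t_0 r + t_1 s + z) = 0\), with the two components interchanged by \((r,s)\mapsto(-r,-s)\), precisely the involution defining \(\Deltatilde\to\Delta\). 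Part~(3) is the standard quadric-bundle argument: fixing \((t_0{:}t_1)\) gives a quadric surface in \(\PP^3_{[u:v:w:z]}\), and an odd-degree multisection yields an odd-degree zero-cycle on the smooth generic fiber; Springer's theorem then produces a \(\kk(\PP^1)\)-rational point, so \(Y\) is \(k\)-rational. Part~(5) is immediate from the same Bruin relations: the map \((t_0{:}t_1)\mapsto((t_0{:}t_1),p,t_0 r + t_1 s)\) defines a section of \(\pi_1\) since \((t_0 r + t_1 s)^2 = t_0^2 Q_1(p) + 2 t_0 t_1 Q_2(p) + t_1^2 Q_3(p)\), and invoking~(3) gives \(k\)-rationality. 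Geometric rationality in~(2) follows by applying~(5) over \(\kbar\).

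For~(4), I would first show that the fibral curve classes of \(\pi_1\) form a free rank-one subgroup of \(\NS^2\Ybar\). The smooth fibers are isomorphic to \(\PP^1\times\PP^1\) with two ruling classes, but at each root of \(\det(t_0^2 M_1 + 2 t_0 t_1 M_2 + t_1^2 M_3)\) the fiber degenerates to a quadric cone whose single ruling is the specialization of both rulings of nearby smooth fibers. Consequently, the relative Fano scheme of lines of \(\pi_1\) is a connected \(\PP^1\)-bundle over an irreducible base, so all fibral lines are algebraically equivalent in \(\Ybar\) and together generate \(\Z\gamma_1\). The concluding claim that \(\NS^2\Ybar\) is generated by \(\gamma_1\) and \(\gammatilde_0\) then follows from the last part of Proposition~\ref{prop:NS2} combined with the geometric rationality established above: the exact sequence \(0\to\Z\gammatilde_0\to\NS^2\Ybar\to\NS^1\PP^2=\Z\to 0\) splits by sending a line class \(\ell\) to \(\gamma_1\), since \(\piX_*\gamma_1 = \ell\).

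The main obstacle I anticipate is the irreducibility argument for the relative Fano scheme of lines in~(4), which reduces to showing that \(\det(t_0^2 M_1 + 2 t_0 t_1 M_2 + t_1^2 M_3)\) is not identically zero and has at least one root. This polynomial is, up to sign, the defining equation of the Prym curve from Theorem~\ref{thm:EqnsForCurves}\eqref{part:Stein}, so under our standing assumptions \(\Gamma\) is a smooth genus \(2\) curve, guaranteeing six distinct roots; in particular, singular \(\pi_1\)-fibers genuinely exist and link the two rulings as claimed.
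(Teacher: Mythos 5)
Your proof is correct and follows essentially the same route as the paper's: the Jacobian criterion for smoothness, fiberwise verification of the two bundle structures, Springer's theorem for (3), irreducibility of the relative Fano scheme of lines for (4), and the two components of the conic fiber over \(\piDelta(\tilde p)\) serving as sections of \(\pi_1\) for (5). The only differences are cosmetic: you deduce geometric rationality by applying (5) over \(\kbar\) rather than citing Prokhorov's Corollary 5.6.1, and you justify the irreducibility of the line parameter space (via the rank-\(3\) fibers linking the two rulings, i.e., connectedness of the Prym curve \(\Gamma\)) where the paper merely asserts it.
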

\begin{remark}
    For \(\pi_1\) to have a section, it is necessary that \(\pi_1\) is surjective on \(k\)-points.  Over the real numbers, this necessary condition is also sufficient \cite{Witt-quadratic-forms}*{Satz 22}.
\end{remark}
\begin{proof}
    The smoothness of \(Y\) is verified by the Jacobian criterion, using the smoothness of \(\Delta\) and \(\Deltatilde\). The conic bundle and quadric surface bundle structures on \(Y\) can be verified by checking fibers; geometric rationality of \(Y\) then follows from \cite{prokhorov-rationality-conic-bundles}*{Corollary 5.6.1}. Furthermore, quadric surfaces are rational as soon as they have a point (projection from the point gives a parametrization), which, by Springer's Theorem occurs if and only if the surface has a point of odd degree~\cite{springer}. So~\eqref{part:OddDegreeMultisection} follows from Springer's Theorem applied to the generic fiber of \(\piQuadricSurface\).
    
    To prove~\eqref{part:FibralCurves} we work over \(\kbar\).  The fibers of \(\piQuadricSurface\) are all quadric surfaces of rank \(3 \) or \(4\), and any curve class on such a quadric is a linear combination of classes of lines on the quadric. The parameter space \(\calF_1(Y/\PP^1)\) of such lines is connected, and so all lines contained in a fiber of \(Y \to \PP^1\) are algebraically equivalent.  The second statement of~\eqref{part:FibralCurves} follows from the first statement and Proposition~\ref{prop:NS2}\eqref{part:rationalNS2}.

    To deduce~\eqref{part:PointOnDeltatilde}, we observe that for any \(\tilde{p}\in \Deltatilde\), the preimage \(\piDoubleCover^{-1}(\PP^1\times \piDelta(\tilde{p}))\) has two irreducible components and they each map with degree \(\deg(\tilde{p})\) under \(\pi_1\).  In particular, if  \(\tilde{p}\in \Deltatilde(k)\), then  each irreducible component of \(\piDoubleCover^{-1}(\PP^1\times \piDelta(\tilde{p}))\) is a section of \(\pi_1\).
\end{proof}

\subsection{The relationship between \texorpdfstring{\(Y\)}{Y} and \texorpdfstring{\(\calQ\)}{mathcal Q}}\label{sec:DoubleCoverAndCalQ}

Recall that \(\calQ \to \PP^1\) is the family defined in Theorem \ref{thm:EqnsForCurves}, which consists of rank at most 4 quadric threefolds and \(\Gamma_{\Deltatilde/\Delta}\) denotes the Prym curve (Definition \ref{def:prym-curve}).

\begin{prop}\label{prop:BlowupQuadricFibration}\hfill
\begin{enumerate}
    \item\label{part:YSteinFactorization} The rational map induced by projection \(\rho\colon \calQ \dashrightarrow \PP^1\times \PP^2, \; ([t_0:t_1],[u:v:w:r:s])\mapsto ([t_0:t_1],[u:v:w])\) is regular away from \(V(u,v,w,t_0r + t_1s)\).  Resolving the indeterminacy gives a 
    fibration of rank at most \(2\) conics \(\tilde{\rho}\colon \Bl_{V(u,v,w,t_0r + t_1s)}\calQ \to \PP^1\times\PP^2\) with the Stein factorization
    \[
        \Bl_{V(u,v,w,t_0r + t_1s)}\calQ \longrightarrow Y \xrightarrow{\piDoubleCover} \PP^1\times \PP^2.
    \]

    \item The surface \(\PP^1\times \Deltatilde\subset \Bl_{V(u,v,w,t_0r + t_1s)}\calQ\) surjects onto \(Y_{\Delta}=Y\times_{\PP^2}\Delta\) via the map in \eqref{part:YSteinFactorization}, thereby giving an explicit realization of \(S\xrightarrow{\nu}Y_{\Delta}\) from~\eqref{diagram:beauville-maps}.
    \label{part:normalizationYDeltatilde}
    \item The morphism \(\tilde\rho\) induces an isomorphism \(\calF_2(\calQ/\PP^1) \xrightarrow{\sim}\calF_1(Y/\PP^1)\), which fits into the following commutative diagram
            \begin{center}
        \begin{tikzcd}
        \calF_1(Y/\PP^1)(\kbar) \arrow[dr] & \calF_2(\calQ/\PP^1)(\kbar) \arrow[r, "\phi"] \arrow[l, swap, "\tilde{\rho}"]& \Sm{1}(\kbar) \arrow[d] \\
        &(\CH^2Y_{\bar{k}})^{\gamma_1} \arrow[r, "\pje"] & \Pm{1}(\kbar),
        \end{tikzcd} 
        \end{center}
    where \(\phi\) is as in Theorem~\ref{thm:EqnsForCurves}\eqref{part:isoF2toS1} and \(p_*j^*\eps^*\) is as in~\eqref{diagram:beauville-maps}. In particular, the Prym curve \(\Gamma_{\Deltatilde/\Delta}\) is the discriminant cover of the quadric surface fibration \(\piQuadricSurface\).\label{part:CurvesinP1}
\end{enumerate}
\end{prop}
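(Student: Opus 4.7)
My approach is to exploit the change of variables $z := t_0r + t_1s$, which converts the defining equation of $\calQ$, namely $t_0^2Q_1 + 2t_0t_1Q_2 + t_1^2Q_3 = (t_0r+t_1s)^2$, into the equation $z^2 = t_0^2Q_1 + 2t_0t_1Q_2 + t_1^2Q_3$ that defines $Y$ as a double cover of $\PP^1\times\PP^2$.

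For parts~\eqref{part:YSteinFactorization} and~\eqref{part:normalizationYDeltatilde}, I first note that $\rho$ fails to be regular precisely where $u=v=w=0$; on $\calQ$, the defining equation then forces $t_0r+t_1s=0$, giving the indeterminacy locus $V(u,v,w,t_0r+t_1s)$.  Under the substitution above, the explicit Stein factorization of $\rho$ sends $([t_0:t_1],[u:v:w:r:s])$ to the point of $Y$ with base coordinates $([t_0:t_1], [u:v:w])$ and double-cover section $z = t_0r+t_1s$, with fibers cut out by a single quadratic equation in $[r:s]$, hence conics of rank at most $2$.  For part~\eqref{part:normalizationYDeltatilde}, every point of $\Deltatilde$ satisfies $Q_1=r^2,\ Q_2=rs,\ Q_3=s^2$ by construction, so $t_0^2Q_1+2t_0t_1Q_2+t_1^2Q_3=(t_0r+t_1s)^2$ holds identically on $\PP^1\times\Deltatilde$, showing $\PP^1\times\Deltatilde\subset\calQ$.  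Since $\Deltatilde\cap V(u,v,w)=\emptyset$, this surface avoids the indeterminacy of $\rho$ and maps into $Y_\Delta$; the two components of a singular fiber $Y_{\piDelta(\tilde p)}$ are precisely the images of $\{t\}\times\tilde p$ and $\{t\}\times\iota(\tilde p)$ meeting at the node, so the induced morphism $\PP^1\times\Deltatilde\to Y_\Delta$ is birational with smooth source, hence is the normalization.

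For part~\eqref{part:CurvesinP1}, the key input is the classical fact that a rank-$4$ quadric in $\PP^4$ is a cone over a smooth quadric surface.  A direct verification shows that $\calQ_t$ is the cone over $Y_t$ with vertex at the unique singular point $p_t = [0:0:0:t_1:-t_0]$, and that the cone projection from $p_t$ is precisely the fiber map of $\tilde\rho$.  Consequently, every $2$-plane in $\calQ_t$ contains $p_t$ and corresponds bijectively under $\tilde\rho$ to a line in the base $Y_t$, yielding the isomorphism $\tilde\rho\colon\calF_2(\calQ/\PP^1)\xrightarrow{\sim}\calF_1(Y/\PP^1)$.  To verify the commutative diagram I would trace $\phi(\Lambda)=\Lambda\cap\Deltatilde$ through $\tilde\rho$: since $\tilde\rho(\Lambda)=\ell\subset Y_t$ and $\tilde\rho$ sends $\Deltatilde\subset\calQ_t$ into $Y_\Delta\cap Y_t$ by part~\eqref{part:normalizationYDeltatilde}, the degree-$4$ divisor $\Lambda\cap\Deltatilde$ maps bijectively onto $\ell\cap Y_\Delta\cap Y_t$ (generically in the smooth locus of $Y_\Delta$).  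Lifting via $\nu^{-1}$ to $S=\PP^1\times\Deltatilde$ gives $\{t\}\times(\Lambda\cap\Deltatilde)$, which pushes down to $\Lambda\cap\Deltatilde$ under $p$, whence $\pje([\ell])=[\Lambda\cap\Deltatilde]$.  For the Prym curve identification, I would compute directly: the quadric $Y_t\subset\PP^3_{[u:v:w:z]}$ has symmetric matrix $\operatorname{diag}\bigl(-(t_0^2M_1+2t_0t_1M_2+t_1^2M_3),\,1\bigr)$, and so discriminant $-\det(t_0^2M_1+2t_0t_1M_2+t_1^2M_3)$, matching the defining equation of $\Gamma_{\Deltatilde/\Delta}$ from Theorem~\ref{thm:EqnsForCurves}\eqref{part:Stein}.

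The main obstacle will be rigorously handling the commutativity in part~\eqref{part:CurvesinP1}: one must ensure that the set-theoretic bijection $\Lambda\cap\Deltatilde \leftrightarrow \ell\cap Y_\Delta\cap Y_t$ respects multiplicities, particularly for $\Lambda$ where $\ell$ fails to meet $Y_\Delta$ transversely or meets it outside the smooth locus.  This can be addressed by first establishing the identity on the dense open subset of $\calF_2(\calQ/\PP^1)$ where all intersections are transverse, then extending by continuity of the Abel--Jacobi map on the universal family.
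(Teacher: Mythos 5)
Your proposal is correct, and for parts~\eqref{part:YSteinFactorization} and~\eqref{part:normalizationYDeltatilde} it is essentially identical to the paper's proof: both identify each fiber \(\calQ_{[t_0:t_1]}\) as the cone over \(Y_{[t_0:t_1]}\) with vertex \(V(u,v,w,t_0r+t_1s)\), realize \(\rho\) as fiberwise projection from the cone point via \(z=t_0r+t_1s\), and observe that \(\PP^1\times\Deltatilde\) sits inside \(\calQ\) away from the indeterminacy locus and maps birationally onto \(Y_\Delta\). The only genuine divergence is in the commutativity check of part~\eqref{part:CurvesinP1}. The paper avoids your multiplicity issue entirely by a purely functorial computation: writing \(\theta\colon\PP^1\times\Deltatilde\hookrightarrow\calQ\) and identifying \(\PP^1\times\Deltatilde\) with \(S\), it notes \(\phi(\Lambda)=p_*\theta^*(\Lambda)\) and \(\theta^*\Lambda=\nu^*r'^*\ell=j^*\eps^*\ell\) by commutativity of the diagram relating \(S\), \(Y'\), \(Y_\Delta\), and \(\calQ\), so the identity \(\phi(\Lambda)=\pje(\tilde\rho(\Lambda))\) holds on the nose for every \(\Lambda\), transverse or not. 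Your route --- establishing the set-theoretic bijection \(\Lambda\cap\Deltatilde\leftrightarrow\ell\cap Y_\Delta\cap Y_{[t_0:t_1]}\) on a dense open locus and extending by continuity --- can be made to work, but it requires the additional (unstated) input that \(\Lambda\mapsto\pje[\tilde\rho(\Lambda)]\) is induced by a morphism of varieties into \(\Pm{1}\) so that agreement on a dense open implies agreement everywhere; the paper's pullback identity makes this unnecessary. Your explicit computation that the discriminant of \(Y_{[t_0:t_1]}\subset\PP^3\) equals \(-\det(t_0^2M_1+2t_0t_1M_2+t_1^2M_3)\) is a nice concrete verification of the final claim, which the paper instead leaves as an immediate consequence of the isomorphism \(\calF_2(\calQ/\PP^1)\cong\calF_1(Y/\PP^1)\) and the characterization of both \(\Gamma_{\Deltatilde/\Delta}\) and the discriminant cover of \(\piQuadricSurface\) as Stein factorizations of these Fano schemes.
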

\begin{proof}
Recall that the total space of the family \(\calQ\) is defined by the equation \[t_0^2Q_1 + 2t_0t_1Q_2 + t_1^2Q_3 - (t_0r + t_1s)^2=t_0^2(Q_1-r^2) + 2t_0t_1(Q_2-rs) + t_1^2(Q_3-s^2)=0\] in \(\PP^1_{[t_0:t_1]}\times\PP^4_{[u:v:w:r:s]}\). Define a rational map \(\calQ\dashrightarrow Y\) by \(([t_0:t_1],[u:v:w:r:s])\mapsto([t_0:t_1],[u:v:w:t_0r+t_1s])\). For each \([t_0:t_1]\), the fiber \(\calQ_{[t_0:t_1]}\) is the cone over the quadric surface \(Y_{[t_0:t_1]}\). The induced fiberwise rational map is projection from the cone point \(V(u,v,w,t_0r+t_1s)\) of \(\calQ_{[t_0:t_1]}\). Hence the rational map \(\calQ\dashrightarrow Y\) is defined away from \(V(u,v,w,t_0r+t_1s)\), the closure of each fiber of \(\calQ\dashrightarrow Y\) is a line through the cone point, and composing with the double cover \(\piDoubleCover\) yields the rational map \(\rho\). Blowing up the line \(V(u,v,w,t_0r+t_1s)\) yields a morphism \(\tilde{\rho}\). This shows \eqref{part:YSteinFactorization}.

Consider the restriction of \(\rho\) to \(\PP^1\times\Deltatilde\); note that \(\PP^1\times\Deltatilde\) does not meet the indeterminacy locus of \(\rho\). For each fixed \([t_0:t_1]\), the map \([t_0:t_1]\times\Deltatilde\to[t_0:t_1]\times\Delta\) is the double cover \(\piDelta\). So \(\rho|_{\PP^1\times\Deltatilde}\) factors through and surjects onto \(Y_\Delta\) by definition, and this morphism is an isomorphism away from the ramification divisor of \(Y_\Delta\to\PP^1\times\Delta\). This shows~\eqref{part:normalizationYDeltatilde}.

It remains to show~\eqref{part:CurvesinP1}. From the description of the rational map \(\calQ\dashrightarrow Y\) given above, we see that \(\tilde\rho\) sends projective \(2\)-planes in the fibers of \(\calQ \to \PP^1\) to lines in the fibers of \(Y \to \PP^1\). It follows that \(\tilde\rho\) induces an isomorphism \(\calF_2(\calQ/\PP^1) \xrightarrow{\sim}\calF_1(Y/\PP^1)\). By~\eqref{part:normalizationYDeltatilde} the diagram below commutes, where the maps are as (\ref{sec:beauville}).
    \[   
    \begin{tikzcd}
        & S \arrow[rr, hook, "j"] \arrow[dr, "\nu", swap] \arrow[d, "p", swap] && Y' \arrow[d, "\eps"] & \PP^1 \times \Deltatilde \arrow[d, hook, "\theta"] \arrow[l, hook'] \arrow[bend right = 25, equal, yshift=2]{lll} \\
        & \Deltatilde & Y_\Delta \arrow[r, hook, "r'"] & Y \arrow[dr, "\piQuadricSurface"] & \calQ \arrow[d] \arrow[l, dashed] \\
        &&&& \PP^1
    \end{tikzcd}
    \]

Fix \([t_0:t_1]\), let \(\ell\) be a line in \(Y_{[t_0:t_1]}\), and let \(\Lambda\) be the 2-plane in \(\calQ_{[t_0:t_1]}\) obtained as the cone over \(\ell\). Identifying \(\PP^1\times\Deltatilde\) with \(S\) we have that \(\phi(\Lambda)=p_*\theta^*(\Lambda)\), and \(\theta^*\Lambda=j^*\eps^*\ell\) by commutativity of the above diagram. Therefore \(\phi(\Lambda)=\pje(\tilde{\rho}(\Lambda))\), so the commutativity of the diagram in~\eqref{part:CurvesinP1} holds as claimed.
\end{proof}

\subsection{Intermediate Jacobian torsors of \texorpdfstring{\(Y\)}{Y}}\label{sec:DoubleCoverIJT}

Recall that if \(\Delta\) is a smooth plane quartic curve, then \(\PPrym_{\Deltatilde/\Delta}^{\Pic_{\PP^2/k}}=\PPrym_{\Deltatilde/\Delta}^{\langle K_\Delta\rangle}\).

\begin{thm}[Corollary of Theorem~\ref{thm:IJTtorsorsOfConicBundles}]
\label{thm:IJ-torsors-double-cover}
Let \(k\) be a field of characteristic different from \(2\), let \(\piDelta\colon\Deltatilde\to\Delta\) be a geometrically irreducible \'etale double cover of a smooth plane quartic curve, and let \(Y=Y_{\Deltatilde/\Delta}\) be the threefold constructed in~\eqref{eqn:DoubleCoverConst}.
Let \(\gammatilde_0\) be the algebraic class of a \(\kbar\)-line contained in a singular fiber of the conic bundle \(\piX\) and \(\gamma_1\) the algebraic class of a \(\kbar\)-line in the fiber of the quadric surface fibration \(\piQuadricSurface\). Then \(\NS^2\Ybar=(\NS^2\Ybar)^{G_k}=\Z\gammatilde_0\oplus\Z\gamma_1\), and the morphism from Theorem~\ref{thm:IJTtorsorsOfConicBundles} induces the following isomorphisms of connected components
    \[
        (\bCH2_{Y/k})^{n \gammatilde_0+m\gamma_1} \simeq 
        \begin{cases} 
            P &: \text{ \(n, m\) even,} \\ 
            \Ptilde &: \text{ \(n\) odd, \(m\) even,} \\ 
            \Pm{1} &: \text{ \(n\) even, \(m\) odd,} \\ 
            \Ptildem{1} &: \text{ \(n, m\) odd,} 
        \end{cases}
    \]
    where \(P, \Ptilde, \Pm{1}, \Ptildem{1}\) are as defined in~\eqref{eq:DefnOfPAndPtilde} and Definition~\ref{def:Ptildem}.
\end{thm}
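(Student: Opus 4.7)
The plan is to combine the general identification in Theorem~\ref{thm:IJTtorsorsOfConicBundles} with the explicit geometry of $Y$ from Sections~\ref{sec:DoubleCoverProps}--\ref{sec:DoubleCoverAndCalQ}.

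First I would establish the N\'eron--Severi description. Since $Y$ is geometrically rational, Proposition~\ref{prop:NS2}(7) gives a short exact sequence
\[
0 \to \Z\gammatilde_0 \to \NS^2\Ybar \to \NS^1\PP^2_{\kbar} \to 0.
\]
A general line in a fiber of $\piQuadricSurface$ maps isomorphically to a line in $\PP^2$ under $\piX$, so $\piX_*\gamma_1$ generates $\NS^1\PP^2_{\kbar}\cong\Z$; hence $\gamma_1$ splits the sequence and $\NS^2\Ybar=\Z\gammatilde_0\oplus\Z\gamma_1$. Galois invariance of both generators follows from Proposition~\ref{prop:NS2}(1) and Proposition~\ref{prop:DoubleCover-structures}\eqref{part:FibralCurves}, since all components of singular conic bundle fibers lie in one algebraic equivalence class, as do all lines in fibers of $\piQuadricSurface$.

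Next I would apply Theorem~\ref{thm:IJTtorsorsOfConicBundles}\eqref{part:IsoAsTorsors} (valid since $\PP^2(k)\neq\emptyset$) to conclude that each $(\bCH2_{Y/k})^{\gamma}$ is isomorphic to a component of $\PPrym_{\Deltatilde/\Delta}^{\langle K_\Delta\rangle}$, with the isomorphism realized by $\pje$. For $\gamma=n\gammatilde_0$, part~\eqref{part:gamma0tilde} of that theorem directly yields $P$ or $\Ptilde$ according to the parity of $n$. For $\gamma=\gamma_1$, I would invoke the commutative diagram in Proposition~\ref{prop:BlowupQuadricFibration}\eqref{part:CurvesinP1}: it shows that $\pje$ sends the class of any line in a fiber of $\piQuadricSurface$ to $\phi(\Lambda)\in\Sm{1}(\kbar)$, which is the Abel--Jacobi preimage of $\Pm{1}$ (the Prym--Brill--Noether component on which $\hh^0$ is even, by Definition~\ref{def:Ptildem}). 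Since $(\CH^2 Y_{\kbar})^{\gamma_1}$ is irreducible, its entire image under $\pje$ must lie in the single component $\Pm{1}$, giving $(\bCH2_{Y/k})^{\gamma_1}\cong\Pm{1}$.

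Finally, for a general class $n\gammatilde_0+m\gamma_1$, I would use the additivity of torsor classes~\eqref{eq:torsors_add} in $\HH^1(k,P)$. The relations $2[\Ptilde]=0$ (from Lemma~\ref{lem:P-Ptilde-characterization}), $2[\Pm{1}]=[\Pm{2}]=0$ (since $\Pm{2}=P+K_{\Deltatilde}$ is trivial as a $P$-torsor, $K_{\Deltatilde}$ being a $k$-rational divisor), and $\Ptildem{1}=\Ptilde+\Pm{1}$ (Definition~\ref{def:Ptildem}) together produce the four parity cases in the statement. The main subtlety is the component identification in the third paragraph: distinguishing $\Pm{1}$ from $\Ptildem{1}$ amounts to matching rulings of fibral quadric surfaces with the Prym--Brill--Noether component of even $\hh^0$, which is precisely what Proposition~\ref{prop:BlowupQuadricFibration}\eqref{part:CurvesinP1} together with Theorem~\ref{thm:EqnsForCurves}\eqref{part:isoF2toS1} supplies.
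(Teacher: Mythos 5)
Your proposal is correct and follows essentially the same route as the paper: it establishes $\NS^2\Ybar=\Z\gammatilde_0\oplus\Z\gamma_1$ via Proposition~\ref{prop:DoubleCover-structures}\eqref{part:FibralCurves} (your splitting of the sequence from Proposition~\ref{prop:NS2} is just an unpacking of that), reduces to the case $n=0$, $m=1$ by the torsor additivity~\eqref{eq:torsors_add} together with Definition~\ref{def:Ptildem} and Theorem~\ref{thm:IJTtorsorsOfConicBundles}, and identifies $(\bCH2_{Y/k})^{\gamma_1}\cong\Pm{1}$ from the commutative diagram of Proposition~\ref{prop:BlowupQuadricFibration}\eqref{part:CurvesinP1} (with Lemma~\ref{lem:Torsors_kbarPoints} implicitly supplying the upgrade from $\kbar$-points to a torsor isomorphism, exactly as in the paper). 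No gaps.
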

\begin{proof}
The first statement is Proposition~\ref{prop:DoubleCover-structures}\eqref{part:FibralCurves}.
We already have \((\bCH2_{Y/k})^{\gammatilde_0} \simeq \Ptilde\) by Theorem~\ref{thm:IJTtorsorsOfConicBundles}\eqref{part:IsoAsTorsors}.  
Proposition~\ref{prop:BlowupQuadricFibration}\eqref{part:CurvesinP1} implies that \(\pje \gamma_1 \in \Pm{1}\) so 
by Theorem~\ref{thm:IJTtorsorsOfConicBundles}\eqref{part:IsoAsTorsors} and Remark~\ref{rem:maps-are-pje}, we have \((\bCH2_{Y/k})^{\gamma_1} \simeq \Pm{1}\).
The result follows from \eqref{eq:torsors_add} and that \([\Ptildem{1}] = [\Ptilde]+ [\Pm{1}]\) and \([\Ptilde]\) and \([\Pm{1}]\) are \(2\)-torsion (Definition~\ref{def:Ptildem}).   
\end{proof}

    \subsection{Consequences of vanishing IJT obstruction}\label{sec:IJTvanishingConsequences}
Since \(\Pm{1} \simeq \bPic^1_{\Gamma/k}\) by Theorem \ref{thm:EqnsForCurves}\eqref{part:isoPic1ToP1} and the genus of \(\Gamma\) is \(2\), the vanishing of the IJT obstruction for a degree \(4\) conic bundle (Section~\ref{sec:Notation}) is equivalent to each of \(\Ptilde\) and \(\Ptildem{1}\) being isomorphic to either \(\bPic^0_{\Gamma/k}\) or \(\bPic^1_{\Gamma/k}\).  Since \(\Ptildem{1} = \Pm{1} + \Ptilde\), this is equivalent to (at least) one of \(\Ptilde\) or \(\Ptildem{1}\) being the trivial \(P\)-torsor.  When \(\Ptildem{1}\) is trivial, we extract the following information.

\begin{prop}\label{prop:Ptilde1_trivial}
Let \(k\) be a perfect field of characteristic different from \(2\).
Let \(\piX\colon X\to \PP^2\) be a degree \(4\) conic bundle. Given a point \(D \in \Ptildem{1}(k)\),  there exists a line \(L = L_D\) in \(\PP^2\), defined over \(k\), such that \(X_L\to L\) has a regular model \(\hat{X}_L\to L\) with a \(\kbar\)-section \(\hat{\mathfrak{S}}\) whose class in \(\Pic((\hat{X}_L)_{\kbar})\) is Galois-invariant and \(\pje [\im \hat{\mathfrak{S}}] = [D]\in\Ptildem{1}(k)\). Furthermore, if \({X}_L(k)\neq\emptyset\), then there is a \(k\)-rational section with these properties.

In particular, if \(X = Y_{\Deltatilde/\Delta}\) as constructed in Section~\ref{sec:DoubleCover}, the aforementioned section maps with odd degree to \(\PP^1\) under \(\piQuadricSurface\).
\end{prop}

Proposition \ref{prop:Ptilde1_trivial} underpins why the intermediate Jacobian obstruction is insufficient to determine rationality: this Galois-invariant rational equivalence class \([\mathfrak S]\) need not descend to a \(k\)-rational section.  We exploit this idea in the construction used in Theorem~\ref{thm:NoRatlityObsSuff}\eqref{part:IJTnotenough} (see Theorem~\ref{thm:IJTorsorObsNotEnough}).

The obstruction to descending a Galois-invariant rational equivalence class of a section of a conic bundle surface is a Brauer class of order \(2\).  If \(k\) has no nontrivial \(2\)-torsion in its Brauer group (which implies that this section does descend), then we can strengthen Proposition~\ref{prop:Ptilde1_trivial} to Theorem~\ref{thm:SufficiencyBrk2trivial} below, which allows us to deduce that the IJT obstruction characterizes rationality for these double covers (Theorem~\ref{thm:Br2trivial-IJT}).

\begin{lemma}\label{lem:Gk-invar-section}
    Let \(\pi\colon X\to\PP^1\) be a smooth morphism such that every fiber is a conic. Given any finite set \(S\subset X(\kbar)\), there exists a geometric section \(\frakS\) of \(\pi\) that is disjoint from the Galois orbit of \(S\) and such that the rational equivalence class of \(\frakS\) in \(\Pic \Xbar\) is Galois invariant.
\end{lemma}
\begin{proof}
    The assumptions imply that \(X\) is either a Hirzebruch surface \(\F_e\) with distinguished section \(\sigma_e\), or \(C'\times\PP^1\) for a conic \(C'\) \cite{Iskovskih-rational-pencil}*{Theorem 4.1(4)}. We have \(\Pic\Xbar = (\Pic\Xbar)^{G_k}\simeq\Z\oplus\Z\) with the first factor generated by the class of \(\sigma_e\) (in the \(\F_e\) case) or a section given by the same choice of \(\kbar\)-point on each fiber (in the \(C'\times\PP^1\) case), and the second factor generated by the class of a fiber. The Galois orbit of \(S\) is finite, so in either case we may find a geometric section \(\frakS\) of \(\pi\) avoiding this finite set, and its class in \(\Pic\Xbar\) is Galois invariant.
\end{proof}

\begin{proof}[{Proof of Proposition~\ref{prop:Ptilde1_trivial}}]
Since \(\Ptildem{1}\) is isomorphic to \(\Stildem{1}\) by Theorem~\ref{thm:EqnsForCurves}\eqref{part:AJiso}, if \(\Ptildem{1}(k)\neq\emptyset\), then there exists a \(k\)-rational divisor \(D\in\Stildem{1}(k)\).  Since \(\piDelta_*D \in |K_{\Delta}| = |\OO_{\Delta}(1)|\), there exists a unique \(k\)-rational line \(L \subset \PP^2\) such that \(L \cap \Delta = \piDelta_*D\). In particular, \(D = \sum_{\pt\in \Delta\cap L}D_{\pt}\) where each \(D_{\pt}\in (\Sym^{\mult_{\pt}(\Delta\cap L)}_{\Delta}(\Deltatilde))_{\pt}\) has relative degree \(1\) over \(\pt\).

Thus, by Proposition~\ref{prop:pjeExplicit}\eqref{part:IrredComponents}, \(D\) selects a single irreducible component in each fiber of \(\hat{X}_L\) over \(\pt\in\Delta\cap L\) that is split over \(\kk(\pt)\). By iteratively blowing down all other irreducible components in the singular fibers, we obtain a \(k\)-morphism \(\hat{X}_L\to \hat{X}_L^0\), where \(\hat{X}_L^0\to L\) is a conic bundle where every fiber is smooth. Now we apply Lemma~\ref{lem:Gk-invar-section} to \(\hat{X}_L^0\) to obtain a geometric section \(\frakS^0\) whose class in \(\Pic((\hat{X}_L^0)_\kbar)\) is Galois invariant and that doesn't contain any of the (finitely many) points where the map \(\hat{X}_L\to \hat{X}_L^0\) has no inverse. Note that if \(X_L(k)\neq\emptyset\) then the Lang--Nishimura lemma implies that \(\hat{X}_L(k)\neq\emptyset\) and \(\hat{X}^0_L(k)\neq\emptyset\) and so \(\frakS^0\) may be taken to be defined over \(k\). 

By the properties of \(\frakS^0\), the proper transform of \(\frakS^0\) in \(\hat{X}_L\) is equal to its complete preimage and so this curve, which we denote by \(\hat{\frakS}\), gives a Galois invariant class in \(\Pic (\hat{X}_L)_{\kbar}\).  Furthermore, \(\hat{\frakS}\) (and all of its Galois conjugates) intersects the irreducible component of the fiber \((\hat{X}_L)_\pt\) corresponding to \(D_{\pt}\) for all \(\pt\in\Delta\cap L\) (using the correspondence in Proposition~\ref{prop:pjeExplicit}\eqref{part:IrredComponents}).  Therefore, letting \(\frakS:=\im\hat{\frakS}\), by Proposition~\ref{prop:pjeExplicit}\eqref{part:pjeSection}, \(\pje[{\frakS}] = D\), as desired.

If \(X = Y_{\Deltatilde/\Delta}\), then since the coefficient of \(\gammatilde_0\) gives the intersection number with a fiber of \(\piQuadricSurface\), Theorem~\ref{thm:IJ-torsors-double-cover} implies that \(\frakS\) is an odd degree geometric multisection of \(\piQuadricSurface \colon Y \to \PP^1\).
\end{proof}

\begin{thm}\label{thm:SufficiencyBrk2trivial}
Let \(k\) be a perfect field of characteristic not \(2\) with \((\Br k)[2] = 0\) and let \(\piX \colon X \to \PP^2\) be a degree \(4\) conic bundle.  If the IJT obstruction vanishes (Definition \ref{defn:IJT-obstruction}), then there exists a smooth genus \(0\) plane curve \(C \subset \PP^2\) and a section of \(X_C \to C\), all defined over \(k\).
Furthermore, if \(X = Y_{\Deltatilde/\Delta}\), then this curve maps with odd degree to \(\PP^1\) under \(\piQuadricSurface\), and hence \(Y_{\Deltatilde/\Delta}\) is rational.
\end{thm}

Before proving Theorem~\ref{thm:SufficiencyBrk2trivial}, we show how it implies Theorem~\ref{thm:Br2trivial-IJT} and give an immediate corollary in the case of finite fields (Corollary~\ref{cor:rationality-over-finite-fields}).

    \begin{proof}[Proof of Theorem~\ref{thm:Br2trivial-IJT}]
        By the purity exact sequence for Brauer groups, any conic bundle with the same discriminant cover \(\Deltatilde\to\Delta\) differs from one of the form \(Y_{\Deltatilde/\Delta}\) by a class in \((\Br k)[2]\). Since we have assumed that \((\Br k)[2] = 0\), every degree \(4\) conic bundle over \(k\) is birational to one of the form \(Y_{\Deltatilde/\Delta}\).
    
        If \(k\) is perfect, applying Theorem~\ref{thm:SufficiencyBrk2trivial} to \(Y_{\Deltatilde/\Delta}\) yields an odd degree multisection of \(\piX_1 \colon Y_{\Deltatilde/\Delta} \to \PP^1\), and hence \(Y_{\Deltatilde/\Delta}\) is \(k\)-rational by Proposition~\ref{prop:DoubleCover-structures}\eqref{part:OddDegreeMultisection}.  If \(k\) is imperfect, then applying Theorem~\ref{thm:SufficiencyBrk2trivial} to \(Y_{\Deltatilde/\Delta}\) over the perfect closure yields a purely inseparable rational multisection of \(\piQuadricSurface\) over \(k\). This purely inseparable rational section has odd degree since the characteristic of \(k\) is not \(2\), so, again by Proposition~\ref{prop:DoubleCover-structures}\eqref{part:OddDegreeMultisection}, \(Y_{\Deltatilde/\Delta}\) is rational. Hence, the IJT obstruction characterizes rationality for degree \(4\) conic bundles over \(k\).
    \end{proof}

    \begin{cor}\label{cor:rationality-over-finite-fields}
    Let \(\mathbb F_q\) be a finite field of odd characteristic. Then any degree \(4\) conic bundle \(\piX \colon X \to \PP^2\) is \(\mathbb F_q\)-rational.
    \end{cor}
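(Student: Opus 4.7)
The plan is to deduce this corollary from Corollary~\ref{cor:Br2trivialIJT}, verifying its two hypotheses over $\mathbb F_q$. The first hypothesis, $(\Br \mathbb F_q)[2]=0$, is immediate from Wedderburn's little theorem (every finite division algebra is a field), which gives $\Br \mathbb F_q=0$. Hence it suffices to check that the IJT obstruction of Definition~\ref{defn:IJT-obstruction} vanishes for an arbitrary degree $4$ conic bundle $\piX \colon X \to \PP^2$ over $\mathbb F_q$.

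To verify IJT vanishing, I would first reduce to the case $X = Y_{\Deltatilde/\Delta}$. The proof of Corollary~\ref{cor:Br2trivialIJT} shows that when $(\Br k)[2] = 0$, every degree $4$ conic bundle over $k$ is birational to one of this form, and the IJT obstruction is a birational invariant, so this reduction is harmless. By Theorem~\ref{thm:EqnsForCurves}\eqref{part:isoJacGammaToPrym}, the Prym curve $\Gamma = \Gamma_{\Deltatilde/\Delta}$ is a smooth, geometrically connected genus $2$ curve over $\mathbb F_q$ with $\bPic^0_{\Gamma/\mathbb F_q} \cong \bPrym_{\Deltatilde/\Delta}$ as principally polarized abelian varieties; combined with Theorem~\ref{thm:jac0}, this verifies the first clause of Definition~\ref{defn:IJT-obstruction}. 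By Theorem~\ref{thm:IJ-torsors-double-cover}, the four components of $\bCH2_{Y/\mathbb F_q}$ are $P,\Ptilde,\Pm{1},\Ptildem{1}$, each a torsor under the abelian variety $P=\bPrym_{\Deltatilde/\Delta}$. The decisive input is Lang's theorem: $H^1(\mathbb F_q, P) = 0$ since $P$ is connected, so each of these is a trivial $P$-torsor; the same is true for each $\bPic^d_{\Gamma/\mathbb F_q}$. Choosing $\mathbb F_q$-points yields isomorphisms $(\bCH2_{Y/\mathbb F_q})^\gamma \cong \bPic^d_{\Gamma/\mathbb F_q}$ as $\bPic^0_{\Gamma/\mathbb F_q}$-torsors for an appropriate $d$, so the IJT obstruction vanishes.

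Since all the heavy lifting has already been done in the paper, there is no real obstacle; the only extra ingredient is Lang's theorem, which is standard. It may be worth noting that in this finite-field setting one can argue more concretely without invoking Corollary~\ref{cor:Br2trivialIJT}: by Lang's theorem $\Ptildem{1}(\mathbb F_q) \neq \emptyset$, so Proposition~\ref{prop:Ptilde1_trivial} produces a Galois-invariant geometric section of $X_L \to L$ for some $\mathbb F_q$-rational line $L \subset \PP^2$ which, moreover, has odd intersection with a fiber of the quadric surface fibration $\piQuadricSurface$; the obstruction to descending this section lies in $(\Br \mathbb F_q)[2] = 0$, yielding an odd-degree multisection of $\piQuadricSurface$ and hence $\mathbb F_q$-rationality by Proposition~\ref{prop:DoubleCover-structures}\eqref{part:OddDegreeMultisection}.
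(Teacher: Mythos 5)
Your proof is correct and follows the same route as the paper, whose entire argument is: Lang's theorem trivializes the $P$-torsors $\Ptilde$, $\Pm{1}$, $\Ptildem{1}$, so the IJT obstruction vanishes and Corollary~\ref{cor:Br2trivialIJT} applies. Your additional details (Wedderburn for $\Br\F_q=0$, the reduction to $Y_{\Deltatilde/\Delta}$, and the concrete alternative via Proposition~\ref{prop:Ptilde1_trivial}) are all correct but are already subsumed in the proofs of Corollary~\ref{cor:Br2trivialIJT} and Proposition~\ref{prop:SufficiencyBrk2trivial}.
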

    \begin{proof}
        Lang's theorem implies that the \(P\)-torsors \(\Ptilde,\Pm{1}\), and \(\Ptildem{1}\) are all trivial, and so by Theorem~\ref{thm:Br2trivial-IJT}, \(X\) is rational.
    \end{proof}

\begin{proof}[Proof of Theorem~\ref{thm:SufficiencyBrk2trivial}]
By assumption, \((\Br k)[2] = 0\), so every smooth \(k\)-fiber of \(\piX\) has a \(k\)-point.  Further, since there is a section \(\delta\) of \(X_{\Delta} \to \Delta\), \(\piX((X(k)) \supset \Delta(k)\) and so \(\piX((X(k)) = \PP^2(k)\).

The vanishing of the IJT obstruction is equivalent to either \(\Ptilde(k) \neq \emptyset\) or \(\Ptildem{1}(k) \neq \emptyset\).  In the second case, we apply Proposition~\ref{prop:Ptilde1_trivial} to obtain a line \(L\subset \PP^2\), defined over \(k\), such that \(X_L\to L\) has a \(k\)-section \(\frakS\) such that \(\pje\frakS\in \Ptildem{1}(k)\).   (We obtain the stronger conclusion that \(\frakS\) is defined over \(k\) because \(L(k)\neq\emptyset\) so by the beginning remark \(X_L(k)\neq\emptyset\).) In particular, if \(X = Y_{\Deltatilde/\Delta}\), then \(\frakS\) maps with odd degree to \(\PP^1\), as desired.

It remains to consider the case that \(\Ptilde(k) \neq \emptyset\).  Translating by \(K_{\Deltatilde}\), we see that \(\Ptildem{2}(k)\neq\emptyset\).  By the Riemann--Roch Theorem (noting that \(K_{\Deltatilde}\) itself belongs to \(\Pm{2}\)), the fibers of \(\Stildem{2} \to \Ptildem{2}\) are Severi--Brauer threefolds.  Since \((\Br k)[2]=0\), each fiber over a \(k\)-point is in fact isomorphic to \(\PP^3_k\).  For any degree \(8\) divisor \(D \in \Stildem{2}(k)\), we have \(\piDelta_*D \in |2K_\Delta| = |\OO_\Delta(2)|\).  Since \(|\OO_{\PP^2}(2)| \simeq |\OO_\Delta(2)|\), there exists a unique \(k\)-rational plane conic \(C_D\) such that \(C_D \cap \Delta = \piDelta_*D\). Note that by the first paragraph of the proof \(X_{C_D}(k)\neq\emptyset\). If \(C_D\) is smooth, we argue using Proposition~\ref{prop:pjeExplicit} similiar to the proof of Proposition~\ref{prop:Ptilde1_trivial} as follows.

Let \(\hat{X}_{C_D}\to X_{C_D}\) be the minimal resolution.  By Proposition~\ref{prop:pjeExplicit}\eqref{part:IrredComponents}, \(D\) determines, for each \(\pt\in\Delta\cap C_D\), an irreducible component of the fiber \((\hat{X}_{C_D})_\pt\) that is split over \(\kk(\pt)\).  Iteratively blowing down all other irreducible components, we obtain a conic bundle \(\hat{X}^0_{C_D}\to C_D\) where every fiber is smooth.  Note that by Lang--Nishimura, \(\hat{X}_{C_D}(k)\) and \(\hat{X}^0_{C_D}(k)\) are both nonempty.  Thus by Lemma~\ref{lem:Gk-invar-section}, \(\hat{X}^0_{C_D}\to C_D\) has a \(k\)-rational section \(\frakS^0\) that is disjoint from the finitely many points where \(\hat{X}_{C_D}\to \hat{X}^0_{C_D}\) has no inverse.  Thus the proper transform \(\hat{\frakS}\) of \(\frakS^0\) intersects all the irreducible components corresponding to \(D\) and so, setting \(\frakS := \im\hat{\frakS}\), Proposition~\ref{prop:pjeExplicit}\eqref{part:pjeSection} implies that \(\pje[{\frakS}]=[D]\). 

If \(X  = Y_{\Deltatilde/\Delta}\), then by 
Theorem~\ref{thm:IJ-torsors-double-cover}, we have \([\frakS] = 2\gamma_1 + (2n + 1)\gammatilde_0\) for some integer \(n\).  Therefore \(\frakS\) maps with odd degree onto \(\PP^1\) under \(\piX_1 \colon Y_{\Deltatilde/\Delta} \to \PP^1\).

Finally, if \(C_D\) is not geometrically integral then the divisor \(D\) is (geometrically) the sum of two degree \(4\) divisors from \(\Sm{1} \cup \Stildem{1}\).
Since the sum of two degree \(4\) divisors which are either both from \(\Sm{1}\) or both from \(\Stildem{1}\) necessarily gives rise to a point on \(\Sm{2}\) by Definition~\ref{def:Ptildem}, we must have that \(D\) is geometrically the sum of a divisor from \(\Sm{1}\) and a divisor from \(\Stildem{1}\).
Furthermore, the Galois action must preserve the parity of \(h^0\), and hence a divisor on \(\Sm{1}\) can never be Galois-conjugate to a divisor on \(\Stildem{1}\).
Therefore, in this case we must have \(D = D_1 + \tilde{D}_1\) where \(D_1 \in \Sm{1}(k)\) and \(\tilde{D}_1 \in \Stildem{1}(k)\), and so \(\Ptildem{1}(k) \neq \emptyset\). Hence, the result follows again from Proposition~\ref{prop:Ptilde1_trivial}.  
\end{proof}

\section{Constructions of Theorem~\ref{thm:NoRatlityObsSuff} and Example~\ref{ex:NoRatlityObsOrConst}}
\label{sec:ProofsofMainThms}

In this section, we construct the conic bundles in Theorem~\ref{thm:NoRatlityObsSuff} and Example~\ref{ex:NoRatlityObsOrConst}.  In all three cases, we exhibit conic bundles constructed as double covers of \(\PP^1\times \PP^2\) as in Section~\ref{sec:DoubleCover} that have the desired properties.
The connected components \(\Pm{1}\) and \(\Ptildem{1}\) of the Prym scheme  (defined in Section~\ref{sec:CanonicalPrymScheme}), and their respective preimages \(\Sm{1},\Stildem{1}\subset \Sym^4\Deltatilde\) under the Abel--Jacobi map will play a key role in the proofs.

Part~\eqref{part:IJTnotenough} of Theorem~\ref{thm:NoRatlityObsSuff} follows from Theorem~\ref{thm:IJTorsorObsNotEnough} and part~\eqref{part:RRtopnotenough} follows from Theorem~\ref{thm:IrratlConicBundle}. We construct the threefold verifying Example \ref{ex:NoRatlityObsOrConst} in Theorem~\ref{thm:NoKnownRationality}, which appears directly after the proof of Theorem~\ref{thm:IJTorsorObsNotEnough} since the methods are similar.
All numerical and algebraic claims in the proofs can be verified with \texttt{Magma} code available on \texttt{Github}~\cite{FJSVV_code}.

    \begin{thm}[Theorem~\ref{thm:NoRatlityObsSuff}\eqref{part:IJTnotenough}]\label{thm:IJTorsorObsNotEnough}
        There exists a degree \(4\) conic bundle \(Y\to \PP^2_{\Q}\) such that:
        \begin{enumerate}
        \item \label{part:ex131Qunirational} \(Y\) is \(\Q\)-unirational, hence 
        \(Y(\Q)\neq \emptyset\);
        \item \label{part:IJTvanishes}\( \tilde{P}^{(1)}(\Q) \neq \emptyset \) , which implies that \(\Ptildem{1}\isom \bPic^0_{\Gamma/\Q}\) and \(\Ptilde \isom \Pm{1} \isom \bPic^1_{\Gamma/\Q}\), so the intermediate Jacobian torsor obstruction vanishes; and
        \item \label{part:irrational} \(Y(\RR)\) is disconnected, and hence \(Y\) is  irrational over any subfield of \(\RR\).
        \end{enumerate}
    \end{thm}

    \begin{proof}
    Let
    \[
    \begin{array}{lcrcrcrcrcrcr}
        Q_1 &:=& -31u^2 &+& 12uv &-& 6v^2 &+& 9uw &+& 531vw &+& 25w^2, \\
        Q_2 &:=& -25u^2 &+& 120uv &+ &30v^2 &-& 31uw &+& 37vw,&\\
        Q_3 &:=& -8047u^2 &+& 1092uv & - &1446v^2 &-& 423uw &-& 375vw &-& 25w^2,
    \end{array}
    \]
    and let \(\Deltatilde\to \Delta\) be the curves as defined in Theorem~\ref{thm:EqnsForCurves}\eqref{part:Qis}.  One can check, using the Jacobian criterion, that these curves are smooth and so we may define \(Y = Y_{\Deltatilde/\Delta}\) to be the conic bundle threefold as defined in \eqref{eqn:DoubleCoverConst}.  
    We will prove that \(Y\) has the properties claimed above.

The threefold \(Y\) has a \(\Q\)-point over \(([1:1],[0:0:1])\in\PP^1\times\PP^2\), so part \eqref{part:ex131Qunirational} follows by Proposition \ref{prop:lowdegrationality}(iii). To prove \eqref{part:irrational}, we compute that the quadric surface bundle \(Y\to \PP^1\) geometrically has 6 degenerate fibers, lying over the vanishing of
\begin{gather*}
8813625t_0^6 + 16982610t_0^5t_1 + 2262441955t_0^4t_1^2 
+ 464971196t_0^3t_1^3 - 
     2293725941t_0^2t_1^4 \\- 291034182t_0t_1^5 + 429774609t_1^6.
\end{gather*}
This locus has exactly four real points \([\alpha_i:1]\) which fall in the following intervals: \(\alpha_1 < -0.75 <\alpha_2 < 0 <\alpha_3 < 0.5 <\alpha_4<1\).  One can compute that the quadric surfaces \(Y_t\) for \(t\in \{-0.75,0,0.5,1\}\) have signatures as described in the following table.
\begin{center}
    \begin{tabular}{c||c|c|c|c}
         \(t\) & \(-0.75\) & \(0\) & \(0.5\) &\(1\) \\
         \hline
         Signature & \((0,4)\) & \((1,3)\) & \((0,4)\) & \((1,3)\) 
    \end{tabular}
\end{center}
In particular, \(\pi_1(Y(\RR))\) has two connected components, and so \(Y(\RR)\) must be disconnected.

It remains to prove \eqref{part:IJTvanishes}. We will now show that \(\tilde{P}^{(1)}(\Q) \neq \emptyset\) by exhibiting a \(\Q\)-point of \(\Stildem{1}\) that maps to \(\Delta\cap V(w)\).
One can verify that \(\Delta\cap V(w)\) consists of the four complex points
\[
    \left\{[i: 2: 0],\;
    [-i: 2: 0],\;
    [1-i: 4: 0],\;
    [1+i: 4: 0]\right\},
\]
and that the following points form a \(G_{\Q}\)-invariant set, lie on \(\Deltatilde\), and map onto \(\Delta\cap V(w)\):
\begin{align*}
  [ -i: 2: 0: 4-3i: 52-21i ],\quad
& [ 1-i: 4: 0: 1 + 7i: -41 -143i ],\\
  [ \phantom{-}i: 2: 0: 4+3i: 52+21i ],\quad
& [ 1+i: 4: 0: 1 - 7i: -41 +143i ].
\end{align*}
Let \(y\in \Sym^4\Deltatilde\) denote the collection of these \(4\) points.  Since \(y\) is fixed by \(\Gal(\Qbar/\Q)\) and maps to \(\Delta\cap V(w)\), we have \(y\in \Sm{1}(\Q)\cup \Stildem{1}(\Q)\).  It remains to show that \(y \notin \Sm{1}(\Q)\).

By Theorem~\ref{thm:EqnsForCurves}\eqref{part:isoF2toS1}, every point of \(\Sm{1}\) is obtained by intersecting \(\Deltatilde\) with a \(2\)-plane.  However, one can check that 
\[
\det \left(\begin{bmatrix}
   -i & 2 & 4-3i& 52-21i \\
  \phantom{-}i & 2 & 4+3i & 52+21i \\
  1-i& 4& 1 + 7i& -41 -143i \\
 1+i & 4& 1 - 7i& -41 +143i 
\end{bmatrix}\right) = -2^93^25\neq 0,
\]
and so the support of \(y\) spans a \(3\)-plane.  Hence, \(y\in \Stildem{1}(\Q) = \Ptildem{1}(\Q)\) as desired.
\end{proof}


    \begin{thm}[Example~\ref{ex:NoRatlityObsOrConst}]\label{thm:NoKnownRationality}
        There exists a degree \(4\) conic bundle \(Y\to \PP^2_{\Q}\) such that:
        \begin{enumerate}
        \item \label{part:ex15Qunirational} \(Y\) is \(\Q\)-unirational;
        \item \label{part:IJTvanishes2} \( \tilde{P}^{(1)}(\Q) \neq \emptyset \),
        so the intermediate Jacobian torsor obstruction vanishes (as in Theorem~\ref{thm:NoRatlityObsSuff}\eqref{part:IJTnotenough});
        \item \label{part:3sphere} \(Y(\RR)\) is diffeomorphic to a \(3\)-sphere;
        \item \label{part:unramcohom} \(Y\) has trivial unramified cohomology groups over \(\RR\);  and finally
        \item \label{part:noratconstr} \(\pi_1(Y(\RR))\) is a proper closed subset of \(\PP^1(\RR)\), and so \(\pi_1\) has no section over any subfield \(k\subseteq\RR\). In particular, there is no known rationality construction for \(Y\).
        \end{enumerate}
    \end{thm}
 \begin{proof}
Let
\[
    \begin{array}{lcrcrcrcrcrcr}
        Q_1 &:=& -31u^2 &+& 12uv &-& 6v^2 &+& 4uw &+& 8vw &+& 25w^2, \\
        Q_2 &:=& -25u^2 &+& 120uv &+ &30v^2 &+& 9uw &-& vw,&\\
        Q_3 &:=& -8047u^2 &+& 1092uv & - &1446v^2 &+& 4uw &+& 7vw &-& 25w^2,
    \end{array}
    \]
and let \(\Deltatilde\to \Delta\) be the curves as defined in Theorem~\ref{thm:EqnsForCurves}\eqref{part:Qis}.  One can check, using the Jacobian criterion, that these curves are smooth and so we may define \(Y = Y_{\Deltatilde/\Delta}\) to be the conic bundle threefold as defined in \eqref{eqn:DoubleCoverConst}.    
We will prove that \(Y\) has the properties claimed above.

As in the proof of Theorem~\ref{thm:IJTorsorObsNotEnough}, the fiber of \(Y_{[0:0:1]}(\Q)\neq\emptyset\) so \eqref{part:ex15Qunirational} holds by Proposition \ref{prop:lowdegrationality}\eqref{part:unirational}. Note that, over \(V(w)\), the example we consider here is equal to the example constructed in the proof of Theorem~\ref{thm:IJTorsorObsNotEnough}.  
Since we showed in the proof of Theorem~\ref{thm:IJTorsorObsNotEnough} that there was a \(\Q\)-point of \(\Stildem{1}\simeq\Ptildem{1}\) that maps to \(\Delta\cap V(w)\), the same is true for this example.  This proves \eqref{part:IJTvanishes2}.

\textbf{\eqref{part:3sphere}:} Let \(M_i\) be the quadratic form corresponding to \(Q_i\), set 
\[
M_t := M_1 + 2t M_2 + t^2 M_3 = 
\begin{pmatrix}
    -8047t^2 - 50t - 31 &  546t^2 + 120t + 6  &     2t^2 + 9t + 2 \\
    546t^2 + 120t + 6 & -1446t^2 + 60t - 6     &  \frac{7}{2}t^2 - t + 4 \\
    2t^2 + 9t + 2     &  \frac{7}{2}t^2 - t + 4     &      -25t^2 + 25 
\end{pmatrix}, 
\]
and let \(M_t^{(i)}\) denote the minor of the top left \(i\times i\) block for \(i=1,2,3\). Observe that
\begin{align*}
    M_t^{(1)} & = -8047t^2 - 50t - 31 = -8047 \left(t + \frac{25}{8047}\right)^2 - \frac{248832}{8047},\\
    M_t^{(2)} & = 11337846t^4 - 541560t^3 + 69156t^2 - 3000t + 150\\
    & = 6 \left(1061 \left(t + \frac{11}{1061}\right)^2 + \frac{5184}{1061} \right) \left(1781 \left(t - \frac{61}{1781}\right)^2 + \frac{5184}{1781}\right),
\end{align*}
so in particular \(M_t^{(1)}\) is strictly negative and \(M_t^{(2)}\) is strictly positive.  In particular, the reciprocals of \(M_t^{(1)}\) and \(M_t^{(2)}\) are \(C^\infty\) functions.  Since \(\textup{Diag}([M_t^{(1)},M_t^{(2)}/M_t^{(1)},M_t^{(3)}/M_t^{(2)}])\) is similar to \(M_t\), and the diagonalization change of basis matrix requires inverting only \(M_t^{(1)}\) and \(M_t^{(2)}\), we see that \(Y(\RR)\) is diffeomorphic to the real \(3\)-manifold \(S\) defined by \(M_t^{(1)} u^2 + M_t^{(2)}/M_t^{(1)} v^2 + M_t^{(3)}/M_t^{(2)} w^2 -z^2\).   

One can compute that \(M_t^{(3)}\) is separable, has exactly two real roots \(\alpha_i\) which fall in the intervals \(\alpha_1<0<\alpha_2<2\), and that \(M_t^{(3)}(0)\) is positive. In particular, \((t-\alpha_1)(t-\alpha_2)/M_t^{(3)}\) is strictly negative. So after scaling \(u,v,w\) by \(\sqrt{-1/M_t^{(1)}}, \sqrt{-M_t^{(1)}/M_t^{(2)}}\) and \(\sqrt{-M_t^{(2)}(t-\alpha_1)(t-\alpha_2)/M_t^{(3)}}\), respectively, \(S\) is diffeomorphic to the real \(3\)-manifold defined by
\[
    \left(\frac{u}{w}\right)^2 + \left(\frac{v}{w}\right)^2 + \left(\frac{z}{w}\right)^2 = -(t-\alpha_1)(t - \alpha_2) = -\left(t - \frac{\alpha_1+\alpha_2}{2}\right)^2 + \frac{(\alpha_1-\alpha_2)^2}{4}.
\]
Moving \((t - \frac{\alpha_1+\alpha_2}{2})^2\) to the left hand side makes evident that this \(3\)-manifold (and hence \(Y(\RR)\)) is diffeomorphic to the \(3\)-sphere.  Note that this last equation also shows that \(\pi_1(Y(\RR)) = \{[1:t]\mid \alpha_1 \leq 
t \leq \alpha_2\} \subset\PP^1(\RR)\), which proves~\eqref{part:noratconstr}.

Finally, for \eqref{part:unramcohom}, since \(\Br Y \simeq\Br \RR\) by the Artin--Mumford exact sequence \cite{Poonen-Qpoints}*{Theorem 6.8.3, Proof of Proposition 6.9.15}, the unramified cohomology groups of \(Y\) are trivial over \(\RR\) \cite{bw-cg}*{Theorem 1.4 and following paragraphs}.
 \end{proof}


    \begin{thm}[Theorem~\ref{thm:NoRatlityObsSuff}\eqref{part:RRtopnotenough}]\label{thm:IrratlConicBundle}
        There exists a degree \(4\) conic bundle \(Y\to \PP^2_{\Q}\) such that:
        \begin{enumerate}
        \item \label{part:ex132Qunirational} \(Y\) is \(\Q\)-unirational;
        \item \label{part:3sphere2} \(Y(\RR)\) is connected; and
        \item \label{part:RealPointsTorsors} \(\bPic^m_{\Gamma/\mathbb R}(\RR)\neq \emptyset\) for all \(m\in \mathbb N\) and
            \(\Ptildem{1}(\RR) = \emptyset\); in particular, there is an intermediate Jacobian torsor obstruction over \(\RR\), and hence \(Y\) is irrational over any subfield \(k \subset \RR\).
        \end{enumerate}
    \end{thm}

    \begin{proof}
        Define
        \[
            Q_1 := -u^2 - v^2 - 3w^2, \quad
            Q_2 := 3u^2 + 5v^2, \quad \textup{and}\; 
            Q_3 := -7u^2 - 23v^2 - 12w^2,
        \]
        and let \(\Deltatilde\to \Delta\) be the curves as defined in Theorem~\ref{thm:EqnsForCurves}\eqref{part:Qis}.  One can check, using the Jacobian criterion, that these curves are smooth and so we may define \(Y = Y_{\Deltatilde/\Delta}\) to be the conic bundle threefold as defined in \eqref{eqn:DoubleCoverConst}.  
        We will prove that \(Y\) has the desired properties.

        Note that \(Y\) has a \(\Q\)-point over \(([2:1],[1:0:0])\in\PP^1\times\PP^2\), so again ~\eqref{part:ex132Qunirational} follows by Proposition \ref{prop:lowdegrationality}(iii). 
        A signature computation shows that the fiber \(Y_{[t_0:t_1]}\) has real points exactly for \(\frac{t_0}{t_1}\in [3-\sqrt{2}, 5+\sqrt{2}]\) and real lines exactly for \(\frac{t_0}{t_1}\in [5-\sqrt{2},3+\sqrt{2}] \subset [3-\sqrt{2}, 5+\sqrt{2}]\).  In particular, \(Y(\mathbb R)\) is nonempty and connected, proving~\eqref{part:3sphere2}.

        It remains to show \eqref{part:RealPointsTorsors}. Multiples of the four real Weierstrass points \(\{3 \pm\sqrt{2}, 5 \pm \sqrt2\}\) give real points  on \(\bPic^m_{\Gamma/\mathbb R}\) for all \(m\), which gives the first part of \eqref{part:RealPointsTorsors}. 
        It remains to show that \(\Ptildem{1}(\RR) = \emptyset\), which, by Theorem~\ref{thm:EqnsForCurves}\eqref{part:AJiso}, is equivalent to showing that \(\Stildem{1}(\RR) = \emptyset.\)  We use the action of complex conjugation to deduce the following constraints on the image of \(\Stildem{1}(\RR)\).
        
        \begin{lemma}\label{lem:RealConditions}
            Let \(\piX\colon X\to\PP^2\) be a degree \(4\) conic bundle over \(\mathbb R\).
            Define \(\Sigma \subset \check{\PP}^2(\RR)\) to be the locus of lines that meet every point of \(\Delta(\RR)\) with even multiplicity.  
            If \(\Deltatilde(\RR) = \emptyset\), then 
            \[
            \piDelta_*(\Sm{1}(\RR))\cap \piDelta_*(\Stildem{1}(\RR)) = \emptyset\quad\textup{ and }\quad 
            \piDelta_*(\Sm{1}(\RR))\cup \piDelta_*(\Stildem{1}(\RR)) \subset \Sigma.
            \]
        \end{lemma}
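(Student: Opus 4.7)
The plan is to exploit the hypothesis $\Deltatilde(\RR)=\emptyset$ via the interaction between complex conjugation and the involution $\iota$ on the fibers of $\piDelta$, together with the parity criterion of Lemma~\ref{lem:SandStildeDifferByOdd}. The initial observation is that if $p\in\Delta(\RR)$ then its fiber $\piDelta^{-1}(p)=\{\tilde p,\iota\tilde p\}$ consists of two non-real points; since this fiber is defined over $\RR$, complex conjugation must swap its two elements. Consequently, for any Galois-invariant effective degree-$4$ divisor $s$ on $\Deltatilde$, the multiplicities of $s$ at $\tilde p$ and at $\iota\tilde p$ are forced to coincide, so $p$ appears in $\piDelta_* s$ with even multiplicity. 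This immediately proves the second inclusion $\piDelta_*(\Sm{1}(\RR))\cup\piDelta_*(\Stildem{1}(\RR))\subset\Sigma$: if $L=\piDelta_* s$ met $\Delta$ transversely at some real point, that point would appear with multiplicity $1$ in $L\cap\Delta$, which is odd, a contradiction.

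For disjointness I argue by contradiction. Suppose $s\in\Sm{1}(\RR)$ and $\tilde s\in\Stildem{1}(\RR)$ satisfy $\piDelta_* s=\piDelta_*\tilde s$. Since $s$ and $\tilde s$ lie in the two distinct connected components of $V_{K_\Delta}$, Lemma~\ref{lem:SandStildeDifferByOdd} (applied with $\deg K_\Delta=4$) forces $\#(s\cap\tilde s)$ to have parity opposite that of $4$, hence to be odd. I then compute this overlap fiber-by-fiber over the points of $L\cap\Delta$. At a real point $p$, the first paragraph shows that both $s$ and $\tilde s$ carry identical multiplicity $\mathrm{mult}_p(L\cap\Delta)/2$ at each of $\tilde p$ and $\iota\tilde p$, so the local overlap contribution equals $\mathrm{mult}_p(L\cap\Delta)$, which is even. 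At a complex conjugate pair $\{q,\bar q\}\subset L\cap\Delta$, the Galois-invariance conditions $n_{\tilde p}=n_{\overline{\tilde p}}$ relate the multiplicities of $s$ over $q$ to its multiplicities over $\bar q$ (and similarly for $\tilde s$), so the local overlap at $q$ equals that at $\bar q$, giving an even contribution from the pair. Summing these even contributions gives an even total, contradicting the odd parity produced by Lemma~\ref{lem:SandStildeDifferByOdd}.

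The step demanding the most care is the local analysis at the complex-conjugate pairs: one must verify that since $\iota$ is defined over $\RR$ it commutes with complex conjugation, so that the four preimages in $\piDelta^{-1}(\{q,\bar q\})$ partition into two real Galois orbits of size $2$, and that the resulting equalities of multiplicity data then link the contributions over $q$ and over $\bar q$. Once this bookkeeping is set up, all of the evenness claims reduce to the elementary fact that $\min(a,a')+\min(b,b')$, counted once over $q$ and once over $\bar q$, doubles to an even number.
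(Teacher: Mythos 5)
Your proof is correct and follows essentially the same route as the paper: the inclusion into $\Sigma$ comes from the fact that, since $\Deltatilde(\RR)=\emptyset$, the fiber over a real point of $\Delta$ is an irreducible degree-$2$ point, and disjointness comes from Lemma~\ref{lem:SandStildeDifferByOdd} together with the evenness of any Galois-invariant collection of points on $\Deltatilde$. The only difference is presentational: the paper obtains that evenness in one line by noting the maximal common subcollection of $s$ and $\tilde s$ is itself Galois-invariant, whereas you verify it by the (equivalent, and correctly executed) fiber-by-fiber multiplicity bookkeeping over real points and conjugate pairs of $L\cap\Delta$.
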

        \begin{proof}
            Let \(s\in \Sm{1}(\RR)\) and \(\tilde{s}\in\Stildem{1}(\RR))\) be such that \(\piDelta_*s = \piDelta_*\tilde{s}\).  Then Corollary~\ref{cor:SandStildeDifferByOdd} implies that \(\deg(s\cap\tilde{s}) \equiv 1\bmod 2\).  However, \(s\cap \tilde{s}\) is a \(G_{\RR}\) invariant effective divisor, which, since \(\Deltatilde(\RR) = \emptyset\), must have even degree.    
            Thus, \(\piDelta_*(\Sm{1}(\RR))\cap \piDelta_*(\Stildem{1}(\RR)) = \emptyset\).  
            
            Now let \(\ell\in\check{\PP}^2(\RR)-\Sigma\); since \(\Delta\) has degree \(4\), there exists a real point \(x\in \ell\cap \Delta\) such that the intersection is transverse at \(x\).  Since \(\Deltatilde(\RR) = \emptyset\),  \(\Delta_x\) must be an irreducible degree \(2\) point, i.e., there is no Galois-invariant choice of a single point of \(\Deltatilde\) lying above \(x\).   Thus \(\ell\notin \piDelta_*(\Sm{1}(\RR))\cup \piDelta_*(\Stildem{1}(\RR))\).
        \end{proof}
        \begin{remark}
        Note that there is always a real line that does not meet \(\Delta(\RR)\) \cite{R19}, so \(\Sigma\neq\emptyset\).
        \end{remark}
        Observe that \(Q_1\) is negative definite.  Since \(\Deltatilde \subset V(Q_1 - r^2)\subset \PP^4\) (see Section~\ref{sec:UnramDoubleCoverDeg4}), this implies that \(\Deltatilde(\RR) = \emptyset\).  Therefore, by Lemma~\ref{lem:RealConditions}, to prove that \(\Stildem{1}(\RR) = \emptyset\) it suffices to show that \(\piDelta_*(\Sm{1}(\RR)) = \Sigma\).
        
        By Proposition~\ref{prop:BlowupQuadricFibration}\eqref{part:CurvesinP1}, real lines in the fibers of \(\pi_1\) give points in \(\Sm{1}\).  We will show (with assistance of \texttt{Magma}) that the real lines on the quadric surfaces \(Y_{[t_0:t_1]}\), which only exist for \(\frac{t_0}{t_1}\in [5-\sqrt{2},3+\sqrt{2}]\), surject onto the real points of \(\Sigma\).
        
        Our verification requires a description of the set \(\Sigma\), but first we describe the real points of \(\Delta\).  Observe that the quadratic forms \(Q_i\) are linear in \(u^2, v^2, w^2\), so we may take the quotient of \(\Delta\) by \(\mu_2\times\mu_2\).  This gives a model of \(\Delta\) as a \(4\)-to-\(1\) cover of the conic \(C \colonequals V(-2a^2 -2b^2 + c^2) \) where the map sends \([u:v:w]\mapsto [4u^2 - 33w^2 : 4v^2 - 81w^2 : 126w^2].\)
        Since \(C\) has a rational point, it is isomorphic to \(\PP^1\), and one can verify that the image of \(\Delta(\RR)\to C(\RR)\) is a single connected component and that its preimage is also connected.  In particular, \(\Delta(\RR)\) consists of a single topological oval that is symmetric about \([0:0:1]\).

        \begin{lemma}\label{lem:Sigmalines}
            Let \(\Sigma^0\subset \Sigma\) be the open subset consisting of real lines that don't meet \(\Delta(\RR)\) (which is nonempty by~\cite{R19}). Assume that \(\Delta(\RR)\) has a single connected component.      
            \begin{enumerate}
                \item  We have \(\Sigma = \overline{\Sigma^0}\).  Furthermore, if \(L\in \Sigma^0\), then \(\Sigma = \cup_{x\in L} \overline{\calL_x\cap \Sigma^0}\), where \(\calL_x\) is the pencil of lines through \(x\) and the closure is taking place in \(\calL_x\). \label{part:ClosureSigma0}
                \item If \(L\in \Sigma^0\)  and \(x\in L(\RR)\), the intersection of \(\Sigma^0\) with the pencil \(\calL_x\) of lines through \(x\) is nonempty and connected.\label{part:connectedSigma0}
            \end{enumerate}
        \end{lemma}
        \begin{proof}
            \textbf{\eqref{part:ClosureSigma0}:}             From the definition of \(\Sigma\) and \(\Sigma^0\), it is clear that we have \(\overline{\Sigma^0}\subset \Sigma\); it remains to prove the reverse containment. By~\cite{R19}, \(\Sigma^0\neq\emptyset\), so fix an \(L\in \Sigma^0\), and consider the affine plane \(U := \PP^2 - L\).  Since \(L\in \Sigma^0\), \(\Delta(\RR)\) is contained completely within \(U\). 
            
            Let \(L'\in \Sigma-\Sigma^0\) and let \(x'\in \Delta(\RR)\cap L'\).  Note that \(U - L'\) has two connected components, so there are two ``sides'' of \(L'\) (in U).  Since \(\mult_x(\Delta\cap L')\) is even (and positive), locally near \(x'\), \(\Delta(\RR)\) must be on one side of \(L'\).  Since this holds for all \(x'\in \Delta(\RR)\cap L'\), \(\Delta(\RR)\) must be on one side of \(L'\).  Thus, we may move \(L'\) away from \(\Delta\) and land inside \(\Sigma^0\).  Hence, \(\overline{\Sigma^0} = \Sigma\).  Furthermore, if \(x = L\cap L'\), then (in the affine chart \(\PP^2-L\)) we may move \(L'\) to another line through \(x\) in the direction away from \(\Delta\), which shows that \(L'\in \overline{\calL_x\cap \Sigma^0}\).
            
            \textbf{\eqref{part:connectedSigma0}:} Consider the morphism \(f\colon \Delta \to \calL_x\) induced by projection from \(x\) (this is a morphism since \(x\notin\Delta\)). By definition, \(\calL_x\cap\Sigma^0 = \calL_x(\RR)-f(\Delta(\RR))\). By assumption, \(L\in \calL_x\cap \Sigma^0\) so we have \(L\notin f(\Delta(\RR))\).
            Since \(\Delta(\RR)\) is connected, its image under \(f\) is also connected, and so the nonempty complement, which is \(\Sigma^0\cap \calL_x\), is also connected.
        \end{proof}
        
    Note that \(L_{\infty}:= V(w)\in \Sigma\) does not meet \(\Delta\) at any \(\RR\)-points.  We will show that for all \(x\in L_{\infty}(\RR)\) and all \(L'\in \calL_x\cap \Sigma\), the line \(L'\) is the image of a line on \(Y_{[t_0:t_1]}\) for some \(\frac{t_0}{t_1}\in [5-\sqrt2, 3 + \sqrt2]\).  (For example, the image of the line \(V(w, z - 2\sqrt{\sqrt{2} - 1}v)\) on
    \[
    Y_{[3 + \sqrt2:1]} : \;4(\sqrt2-1)v^2 + (-18\sqrt2 - 45)w^2 = z^2.
    \] is \(L_{\infty}\).) By Lemma~\ref{lem:RealConditions}, this will imply that \(\Ptildem{1}(\RR)=\emptyset\).
    
    Since \(Y_{[t_0:t_1]}\simeq \PP^1_{\RR}\times \PP^1_{\RR}\) for \([t_0:t_1]\) in this range, a line \(L'\in \calL_x\cap\Sigma\) is the image of a line on \(Y_{[t_0:t_1]}\) if and only if \(L'\) is tangent to the branch conic of the double cover \(Y_{[t_0:t_1]}\to \PP^2\) given by the restriction of \(\pi\).  This tangency condition is encoded by a quadratic form\footnote{The quadratic form is called \texttt{tEqn} in the code \cite{FJSVV_code}.} in \(t_0,t_1\) (whose coefficients depend quadratically on the pencil \(\calL_x\)) having roots in the interval \( I:=[5-\sqrt2, 3 + \sqrt2]\).  Since \(\Delta\) and all of the quadrics \(Q_i\) are symmetric about the origin, it suffices to prove this for the lines that intersect the positive parts of the \(u\) and \(v\) axis, which all have negative slope.

    Let \(x\in L_\infty(\RR)\) be a point corresponding to a line with negative slope.  By Lemma~\ref{lem:Sigmalines}, there are two boundary points of \(\calL_x\cap\Sigma^0\), and exactly one of these corresponds to a line \(L'\) that intersects the positive parts of the \(u\) and \(v\) axis. For this \(L'\), the quadratic form  has a double real root \(r_0\in I\). 
Furthermore, for the lines in \(\calL_x\cap \Sigma^0\), we verify that the quadratic form is nonnegative at the point \(r_0\) and nonpositive at the end points of \(I\).  Thus, by the intermediate value theorem, the quadratic form must have a real root somewhere in \(I\), and hence every line in \(\calL_x\cap \Sigma^0\) is the image of a real line on \(Y_{[t_0:t_1]}\).

Since the above argument extends by symmetry to all \(x\in L_\infty(\RR)\), we have shown that every \(L\in\Sigma\) is the image of a real line on \(Y_{[t_0:t_1]}\), i.e. \(\varpi_*(\Sm{1}(\RR))=\Sigma\).
Thus, we have shown that  \(\Ptildem{1}(\RR)=\emptyset\), so \(\Ptildem{1}\) is not isomorphic over \(\RR\) to \(\bPic^m_{\Gamma/\mathbb R}\) for any \(m\). By~\cite{bw-ij}*{Thm. 3.11(iii)}, \(Y\) must be irrational over any subfield of \(\RR\).
\end{proof}

\section{Contextual results}\label{sec:Context}

 \subsection{Rationality and unirationality of low degree conic bundle threefolds}\label{sec:low_degree}

 Geometrically standard conic bundle threefolds \(\pi \colon X \to \PP^2\) with \(\pi\) smooth away from a curve \(\Delta\subset \PP^2\) are geometrically rational if \(\deg\Delta\leq 4\) \cite{prokhorov-rationality-conic-bundles}*{Corollary 5.6.1} and, in characteristic 0, if \(\deg\Delta=5\) and the discriminant cover \(\Deltatilde \to \Delta\)  corresponds to an even theta characteristic \cite{Panin80}. They are geometrically irrational if \(\deg\Delta\geq 6\) \cite{beauville-ij}*{Th\'eor\`eme 4.9} or if \(\deg\Delta=5\) and \(\Deltatilde \to \Delta\) is given by an odd theta characteristic \cite{Shokurov-Prym}*{Main Theorem}.
 
 When \(X(k) \neq \emptyset\) and \(\deg \Delta\leq 3\),  
  the proof of rationality in \cite{prokhorov-rationality-conic-bundles}*{Corollary 5.6.1} can be modified to hold over any field \(k\) by taking a pencil of lines through the image of a \(k\)-point on \(X\).
  Similarly, the proof in the case when   \(\deg \Delta = 4\) can be modified to hold when \(\Deltatilde(k) \neq \emptyset\) by taking a pencil of lines through the image of a \(k\)-point on \(\Deltatilde\).
 
 When \(X\) has a \(k\)-point on a smooth fiber of \(\piX\) and \(\deg\Delta\leq 7\), then \(X\) is unirational by unirationality results for conic bundle surfaces. Specifically, if \(\pt\in\PP^2(k)\) is the image of the given \(k\)-point, then the generic fiber of \(X\times_{\PP^2}(\Bl_{\pt}\PP^2)\to\PP^1\) is a conic bundle surface over the generic fiber of \(\Bl_{\pt}\PP^2\to\PP^1\) and admits a \(\kk(\PP^1)\)-point, and so is unirational by \cite{Koll'arMella}*{Corollary 8}. In summary:
 
     \begin{prop}\label{prop:lowdegrationality}
        Let \(\piX\colon X\to\PP^2\) be a geometrically standard and geometrically ordinary conic bundle over a field \(k\) of characteristic different from \(2\). Then:
        \begin{enumerate}
            \item If \(X(k)\neq\emptyset\) and \(\deg\Delta\leq 3\), then \(X\) is \(k\)-rational.
            \item If \(\Deltatilde(k)\neq\emptyset\) and \(\deg\Delta=4\), then \(X\) is \(k\)-rational.
            \item\label{part:unirational} If \(\piX\) has a smooth fiber with a \(k\)-point and \(\deg\Delta\leq 7\), then \(X\) is \(k\)-unirational.
        \end{enumerate}  
    \end{prop}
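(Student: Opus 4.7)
The plan in all three parts is to use the given $k$-rational point on $X$ or $\Deltatilde$ to construct a pencil of lines in $\PP^2$, and then exploit the resulting structure as a conic bundle over $\PP^1$ (equivalently, a conic bundle surface over $\kk(\PP^1) = k(t)$). In each case, let $p \in \PP^2(k)$ denote the image of the given $k$-point (under $\piX$ in cases (1) and (3), under $\piDelta$ in case (2)), let $\sigma\colon \Bl_p \PP^2 \to \PP^2$ be the blow-up, and let $\rho\colon \Bl_p \PP^2 \to \PP^1$ be the projection whose fibers are the proper transforms of lines through $p$. Base-changing $X$ along $\sigma$ and composing with $\rho$ realizes $X$ as birational to the total space of a conic bundle surface $Y \to \PP^1_k$, whose generic fiber is a smooth conic over $k(t)$ with discriminant of degree $\deg \Delta - m_p(\Delta)$, where $m_p(\Delta)$ is the multiplicity of $\Delta$ at $p$.

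For part (1), since $\deg \Delta \leq 3$ it suffices to exhibit a $k$-point on the generic fiber of $Y \to \PP^1_k$, so that it becomes isomorphic to $\PP^1_{k(t)}$ and $X$ is $k$-rational. The chosen point $x \in X(k)$ gives such a $k(t)$-point provided $x$ lies on a smooth fiber of $\piX$; if not, smoothness of $X$ ensures that $x$ lies on a geometric component of $X_p$ defined over $k$, giving a section of $Y \to \PP^1_k$ locally near the point corresponding to the tangent direction at $x$. For part (2), the $k$-point $\tilde{q}\in\Deltatilde(k)$ above $p \in \Delta(k)$ distinguishes a Galois-invariant geometric component $C \subset X_p$ of the singular fiber; for a general line $L$ through $p$, the proper transform of $C$ is a $(-1)$-curve in the conic bundle surface $X_L$, and contracting it produces a conic bundle surface with $\deg \Delta - 1 = 3$ singular fibers and a distinguished $k$-point, which is $k$-rational by the argument of~\cite{prokhorov-rationality-conic-bundles}*{Corollary~5.6.1} applied over $k$. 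The resulting rational parametrization of $X_L$ is defined over $k$ by construction.

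For part (3), we again take $p = \piX(x)$ for $x$ a $k$-point on a smooth fiber of $\piX$. The surface $Y$ is then a conic bundle over $k(t)$ of discriminant degree at most $7$, and it carries the $k(t)$-point inherited from $x$ lying on a smooth fiber. By~\cite[Corollary~8]{Koll'arMella}, such a conic bundle surface is $k(t)$-unirational, yielding a dominant $k(t)$-rational map $\PP^2_{k(t)} \dashrightarrow Y$. Spreading out gives a dominant $k$-rational map from the $k$-rational variety $\PP^2_k \times \PP^1_k$ onto $X$, so $X$ is $k$-unirational.

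The main subtlety lies in part (2): one needs to verify that the $k$-point of $\Deltatilde$ genuinely produces a contraction defined over $k$ on the conic bundle surface $X_L$ (and not merely after passage to $\kbar$), so that the reduction to the degree-$3$ discriminant case proceeds over $k$ rather than geometrically. This uses geometric standardness of $X$, which identifies points of $\Deltatilde$ with Galois-invariant components of singular fibers of $\piX$, together with the standard fact that such a Galois-invariant component on a smooth projective conic bundle surface is contractible over the ground field.
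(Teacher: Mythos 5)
Your proposal follows essentially the same route as the paper: project from the image of the given rational point to get a pencil of lines, view \(X\) birationally as a family over \(\PP^1_k\) whose generic fiber is a conic bundle surface over the generic line, and then invoke the surface-level results (the rationality argument of \cite{prokhorov-rationality-conic-bundles}*{Corollary 5.6.1} for \(\deg\Delta\leq 3\) plus a rational point, the Galois-invariant contraction of the component of \(X_p\) singled out by a point of \(\Deltatilde\) for \(\deg\Delta=4\), and \cite{Koll'arMella}*{Corollary 8} for unirationality). Parts (2) and (3) are correct and match the paper. The one place you should tighten is part (1): a \(k(t)\)-point of the surface \(X_{L_\eta}\) lies in a single fiber of \(X_{L_\eta}\to L_\eta\) and does not make the generic fiber of that conic bundle ``isomorphic to \(\PP^1_{k(t)}\)''; splitting the generic conic would require a rational (multi)section of odd degree, which you have not produced. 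What the hypothesis \(\deg\Delta\leq 3\) actually buys is that \(X_{L_\eta}\) is a conic bundle surface with at most three degenerate fibers (so \(K^2\geq 5\)) carrying a rational point, and such a surface is rational by Iskovskikh's criterion --- exactly the input you correctly cite via Prokhorov in part (2). With that sentence repaired, the argument is the paper's.
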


\subsection{Conic bundles and the intermediate Jacobian obstruction.}\label{sec:IJconicbundlescontext}
As mentioned in the introduction, over \(\C\), the intermediate Jacobian of a minimal conic bundle \(X\to W\) over a rational surface is isomorphic to the Prym variety of its discriminant cover \(\Deltatilde\to\Delta\) \cites{beauville-ij,BeltramettiFrancia83}. Shokurov proved that, over \(\C\), the intermediate Jacobian obstruction characterizes rationality for standard conic bundles over minimal rational surfaces \cite{Shokurov-Prym}*{Theorem 10.1}.

Over nonclosed fields, the rationality problem has also been studied for conic bundle threefolds that are not geometrically standard. Benoist and Wittenberg have previously constructed an example of a conic bundle \(X\to W\) with geometrically \emph{reducible} discriminant cover such that \(X\) is \(\C\)-rational but not \(\RR\)-rational and for which \(\bCH2_{X/\RR}=0\); in particular, the IJT obstruction vanishes for \(X\) \cite{bw-cg}*{Theorem 5.7}. In their example \(W(\RR)\) is disconnected, and so \(X\) has a Brauer group obstruction to rationality.

\subsection{Relation to complete intersections of two quadrics}\label{sec:TwoQuadrics}

    As mentioned in the introduction, work of Hassett--Tschinkel, Benoist--Wittenberg, and Kuznetsov--Prokhorov combine to show that in characteristic \(0\) the intermediate Jacobian obstruction characterizes rationality for all geometrically rational Fano threefolds of geometric Picard rank \(1\)~\cites{HT-intersection-quadrics, bw-ij, KP-Fano-3folds-rank1}. One such class of threefolds, namely that of smooth complete intersection of two quadrics \(Z\subset \PP^5\) containing a conic \(C\), is known to give rise to conic bundle threefolds; indeed, the projection from this conic \(\Bl_C Z\to\PP^2\) has the structure of a conic bundle ramified over a quartic curve \cite{HT-intersection-quadrics}*{Remark 13}.
    
    As we describe in Proposition~\ref{prop:BW3.10} and Corollary~\ref{cor:BlowupIJT} below, the results of Benoist and Wittenberg imply that since the IJT obstruction characterizes rationality for \(Z\), it also characterizes rationality for \(\Bl_C Z\). In particular, the conic bundle from Theorem~\ref{thm:NoRatlityObsSuff}\eqref{part:IJTnotenough} is not isomorphic to \(\Bl_CZ\) for an intersection of quadrics \(Z\subset \PP^5\) containing a conic \(C\). However, these results do not imply, a priori, that the conic bundles from Theorem~\ref{thm:NoRatlityObsSuff}\eqref{part:RRtopnotenough} and Example~\ref{ex:NoRatlityObsOrConst} are not of this form.  To demonstrate this, we show in Proposition~\ref{prop:NecConds2Quadrics} that \(\Bl_CZ\) has a Prym curve with a rational point, and one can verify that the Prym curve from Theorem~\ref{thm:NoRatlityObsSuff}\eqref{part:RRtopnotenough} and Example~\ref{ex:NoRatlityObsOrConst} have no rational points over \(\Q\) (in fact, no \(\Q_3\)-points; see the \texttt{Magma} code in the \texttt{Github} repository for verification \cite{FJSVV_code}).
    
    \begin{prop}[Special case of~\cite{bw-ij}*{Proposition 3.10}]\label{prop:BW3.10}
        Let \(Z \subset \PP^5\) be a smooth intersection of two quadrics that contains a conic \(C\) over a field \(k\). Then there is an isomorphism of group schemes
        \(\bCH2_{Z/k}\times\bPic_{C/k}\xrightarrow{\sim}\bCH2_{\Bl_C Z/k}\)
        respecting the principal polarizations.\hfill \qed
    \end{prop}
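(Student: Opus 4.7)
The plan is to deduce this directly from the general blowup formula of Benoist--Wittenberg, after verifying that its hypotheses are satisfied in our setting. Their Proposition 3.10 gives, for a smooth projective geometrically rational threefold $Y$ over $k$ and a smooth projective curve $C \subset Y$, an isomorphism of group schemes
\[
    \bCH2_{Y/k} \times \bPic_{C/k} \xrightarrow{\sim} \bCH2_{\Bl_C Y/k},
\]
and (when $Y$ has the appropriate polarization structure) identifies the product of the natural principal polarizations on $(\bCH2_{Y/k})^0$ and $\bPic^0_{C/k}$ with the principal polarization on $(\bCH2_{\Bl_C Y/k})^0$. So the substance of the proof is a verification step, not a new construction.

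First I would check that $Z$ and $C$ satisfy the hypotheses. A smooth complete intersection of two quadrics in $\PP^5$ is a Fano threefold which is geometrically rational (classically, by projection from any line on $Z_{\kbar}$, and $Z$ does contain a line geometrically since it contains the conic $C$, which is itself rational over $\kbar$). A conic in $\PP^5$ is a smooth projective curve of genus $0$, so $C$ is a smooth projective geometrically rational curve. The blowup $\Bl_C Z$ is then a smooth projective geometrically rational threefold, so both sides of the claimed isomorphism exist as smooth group schemes over $k$ by \cite{bw-ij}*{Theorem 3.1}, and Benoist--Wittenberg's blowup formula applies.

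Next I would address the principal polarizations. The intermediate Jacobian $(\bCH2_{Z/k})^0$ of a smooth intersection of two quadrics in $\PP^5$ is classically principally polarized (isomorphic as such to the Jacobian of the associated hyperelliptic genus $2$ curve); similarly $\bPic^0_{C/k}$ carries its canonical principal polarization (which is trivial since $C$ has genus $0$, but formally one just takes the product polarization). The compatibility of polarizations under the blowup isomorphism is part of \cite{bw-ij}*{Proposition 3.10}, so no further work is needed here beyond quoting it.

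The only mild subtlety, which I would note explicitly rather than grind through, is that one must ensure $C$ is a regular subvariety of $Z$ so that $\Bl_C Z$ is smooth; this is automatic since both $Z$ and $C$ are smooth and $C$ has codimension $2$ in $Z$. Once all of these checks are in place, the statement is an immediate consequence of the cited proposition, so the proof amounts to a single sentence applying \cite{bw-ij}*{Proposition 3.10} to the pair $(Z,C)$. The main potential obstacle would be if the Benoist--Wittenberg formula required $C$ to satisfy some additional hypothesis (such as being geometrically connected or having a $k$-rational divisor of a specific degree) beyond smoothness; I would verify from their statement that no such extra hypothesis is imposed in the threefold-with-smooth-curve case.
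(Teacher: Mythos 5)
Your proposal takes the same route as the paper, which states this proposition with no argument beyond the citation of \cite{bw-ij}*{Proposition 3.10}; your hypothesis checks (smoothness of \(\Bl_C Z\), geometric rationality, existence of the group schemes via \cite{bw-ij}*{Theorem 3.1}) are exactly the implicit content of that citation. The only slip is your claim that \(Z_{\kbar}\) contains a line \emph{because} it contains a conic --- that implication is not valid in general; instead use the standard fact that the variety of lines on a smooth complete intersection of two quadrics in \(\PP^5\) is nonempty over \(\kbar\) (it is a torsor under an abelian surface), which gives geometric rationality by projection from a line.
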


    \begin{cor}[{Corollary of  \cite{HT-intersection-quadrics}*{Theorem 14}, \cite{bw-ij}*{Theorem 4.7} and the above}]\label{cor:BlowupIJT}
        Let \(Z \subset \PP^5\) be a smooth intersection of two quadrics  containing a conic \(C\). Then the IJT obstruction characterizes rationality for \(\Bl_C Z\).
    \end{cor}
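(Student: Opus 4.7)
The plan is to leverage Proposition~\ref{prop:BW3.10}'s product decomposition of \(\bCH2\) together with birational invariance of rationality, reducing the statement to the case of smooth intersections of two quadrics in \(\PP^5\), where \cite{ht-intersection-quadrics}*{Theorem~14} and \cite{bw-ij}*{Theorem~4.7} have already established that the IJT obstruction characterizes rationality. One direction is automatic from Definition~\ref{defn:IJT-obstruction} and \cite{bw-ij}*{Thm.~3.11}: if \(\Bl_C Z\) is \(k\)-rational then its IJT obstruction vanishes. For the converse, I will show that the vanishing of the IJT obstruction for \(\Bl_C Z\) forces vanishing of the IJT obstruction for \(Z\), and then conclude \(k\)-rationality of \(Z\) (hence of its birational model \(\Bl_C Z\)) via \cite{bw-ij}*{Theorem~4.7}.

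The heart of the argument uses that the smooth conic \(C\) has genus \(0\), so \(\bPic^0_{C/k}\) is the trivial \(k\)-group scheme \(\Spec k\). Applying Proposition~\ref{prop:BW3.10}, I obtain an isomorphism of principally polarized abelian varieties \((\bCH2_{Z/k})^0 \cong (\bCH2_{\Bl_C Z/k})^0\), so any curve \(\Gamma\) of genus \(\geq 2\) realizing the intermediate Jacobian obstruction for \(\Bl_C Z\) simultaneously realizes the obstruction for \(Z\). Next, for every Galois-invariant class \(\gamma \in (\NS^2 Z_\kbar)^{G_k}\), the class \((\gamma, 0)\) is Galois-invariant in \((\NS^2 (\Bl_C Z)_\kbar)^{G_k}\), and the group-scheme isomorphism of Proposition~\ref{prop:BW3.10} identifies the component \((\bCH2_{\Bl_C Z/k})^{(\gamma, 0)}\) with \((\bCH2_{Z/k})^\gamma \times \bPic^0_{C/k} \cong (\bCH2_{Z/k})^\gamma\) as torsors under \((\bCH2_{Z/k})^0\). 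The hypothesis of IJT vanishing for \(\Bl_C Z\) then yields an integer \(d\) with \((\bCH2_{Z/k})^\gamma \cong \bPic^d_{\Gamma/k}\) as torsors, which is exactly the IJT vanishing for \(Z\).

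The main subtlety to confirm is that the principally polarized isomorphism of Proposition~\ref{prop:BW3.10} is compatible with the N\'eron--Severi decomposition \(\NS^2(\Bl_C Z)_\kbar \cong \NS^2 Z_\kbar \oplus \NS(C_\kbar)\) coming from blow-up formulas, so that the component of \(\bCH2_{\Bl_C Z/k}\) labeled by \((\gamma, d)\) corresponds to \((\bCH2_{Z/k})^\gamma \times \bPic^d_{C/k}\). Since the isomorphism is one of group schemes, it necessarily preserves identity components and permutes cosets compatibly with the induced map on component groups, so this reduces to verifying that the induced map on \(\NS^2\) is the natural one arising from the blow-up; with that in hand, the remainder is a direct unwinding of Definition~\ref{defn:IJT-obstruction}.
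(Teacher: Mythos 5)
Your proposal is correct and follows essentially the same route as the paper: one direction from \cite{bw-ij}*{Thm.\ 3.11}, and for the converse the product decomposition of Proposition~\ref{prop:BW3.10} (with \(\bPic^0_{C/k}\) trivial since \(C\) has genus \(0\)) to transfer IJT vanishing from \(\Bl_C Z\) to \(Z\), then \(k\)-rationality of \(Z\) via \cite{ht-intersection-quadrics}*{Theorem 14} and \cite{bw-ij}*{Theorem 4.7} and birational invariance. The paper leaves the transfer step implicit in citing Proposition~\ref{prop:BW3.10}; your spelled-out verification of compatibility with the N\'eron--Severi decomposition is exactly the content packaged into that cited result.
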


\begin{proof}
If \(\Bl_C Z\) is rational, then the intermediate Jacobian torsor obstruction vanishes by \cite{bw-ij}*{Theorem 3.11}. For the reverse implication, if the IJT obstruction vanishes for \(\Bl_C Z\) then, by Proposition~\ref{prop:BW3.10}, it also vanishes for \(Z\) and so \(k\)-rationality of \(Z\) follows from \cite{HT-intersection-quadrics}*{Theorem 14} \cite{bw-ij}*{Theorem 4.7}.
\end{proof}

    \begin{prop}\label{prop:NecConds2Quadrics}
        Let \(Z\subset \PP^5\) be a smooth complete intersection of two quadrics that contains a conic \(C\).  Then \((\bCH2_{Z/k})^0 \simeq (\bCH2_{\Bl_C Z/k})^0 \isom \bPic^0_{\Gamma_Z}\) where \(\GammaZ\) is the genus two curve produced by the Stein factorization of the Fano variety of \(2\)-planes in the pencil of quadrics \(\calF_2(\Bl_Z\PP^5/\PP^1) \to\GammaZ\to\PP^1\).  Moreover,  \(\GammaZ(k)\neq\emptyset\).
    \end{prop}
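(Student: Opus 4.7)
The plan is to break the proof into three parts: the first isomorphism $(\bCH2_{Z/k})^0 \cong (\bCH2_{\Bl_C Z/k})^0$, the second isomorphism $(\bCH2_{Z/k})^0 \cong \bPic^0_{\Gamma_Z}$, and the existence of a $k$-point on $\Gamma_Z$. For the first isomorphism, I would apply Proposition~\ref{prop:BW3.10} to obtain $\bCH2_{\Bl_C Z/k} \cong \bCH2_{Z/k} \times \bPic_{C/k}$ as principally polarized group schemes, then restrict to identity components. The factor $(\bPic_{C/k})^0$ is trivial because $C$ is a smooth conic, hence geometrically isomorphic to $\PP^1$.

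For the second isomorphism, the plan is to route through the conic bundle $\Bl_C Z \to \PP^2$, whose discriminant is a smooth plane quartic $\Delta$ by~\cite{ht-intersection-quadrics}*{Remark 13}. Theorem~\ref{thm:jac0} identifies $(\bCH2_{\Bl_C Z/k})^0 \cong \bPrym_{\Deltatilde/\Delta}$, and Theorem~\ref{thm:EqnsForCurves}\eqref{part:isoJacGammaToPrym} identifies $\bPrym_{\Deltatilde/\Delta} \cong \bPic^0_{\Gamma_{\Deltatilde/\Delta}}$, both as principally polarized abelian varieties. Combined with the first step, this yields $(\bCH2_{Z/k})^0 \cong \bPic^0_{\Gamma_{\Deltatilde/\Delta}}$. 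The main obstacle is then to identify the Prym curve $\Gamma_{\Deltatilde/\Delta}$ of the conic bundle with the hyperelliptic curve $\Gamma_Z$ defined by the pencil of quadrics. Both are smooth genus $2$ curves arising as Stein factorizations of Fano schemes of maximal linear subspaces on related pencils of quadrics. One route is to invoke the classical Reid identification $(\bCH2_{Z/k})^0 \cong \bPic^0_{\Gamma_Z}$ (valid over arbitrary $k$ via the analysis in~\cite{bw-ij}), which combined with our identifications above yields an isomorphism of principally polarized Jacobians, and by Torelli for genus $2$ curves forces $\Gamma_{\Deltatilde/\Delta} \cong \Gamma_Z$ over $k$.

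For the $k$-point on $\Gamma_Z$, I would exploit the $k$-rational $2$-plane $\Pi := \langle C \rangle \subset \PP^5$. Each defining quadric $Q_i$ of $Z$ contains $C$, so its restriction $Q_i|_\Pi$ is a scalar multiple $c_i q_C$ of the defining equation $q_C$ of $C$ in $\Pi$. The constants $c_1, c_2 \in k$ do not both vanish, for otherwise $\Pi \subset Z$; but this contradicts the fact that $Z$ is a smooth Fano threefold of degree $4$ with $\Pic Z_{\kbar} = \Z \cdot H$, where a brief calculation with intersection numbers (any $2$-plane divisor would have class $n H|_Z$ for some $n \in \Z$, yet $(H|_Z)^2 \cdot [\Pi] = 1$, forcing $4n = 1$) shows $Z$ contains no $2$-plane. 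Hence $t^* := [-c_2 : c_1] \in \PP^1(k)$ is a well-defined $k$-point with $\Pi \subset Q_{t^*}$. Since smoothness of $Z$ forces every quadric in the pencil to have rank at least $5$, the $2$-plane $\Pi$ gives a well-defined $k$-point $[\Pi] \in \calF_2(Q_{t^*}) \subset \calF_2(\Bl_Z\PP^5/\PP^1)$, whose image under the Stein factorization is the desired $k$-point of $\Gamma_Z$.
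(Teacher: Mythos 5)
Your proposal is correct, and two of its three parts coincide with the paper's argument: the first isomorphism is obtained exactly as in the paper by restricting Proposition~\ref{prop:BW3.10} to identity components (the paper does not even bother to note that \((\bPic_{C/k})^0\) is trivial, but your observation is the right justification), and the identification \((\bCH2_{Z/k})^0\cong\bPic^0_{\Gamma_Z}\) is in the end taken from the same external source in both treatments --- the paper cites \cite{ht-intersection-quadrics}*{Section 11.6} and \cite{bw-ij}*{Theorem 4.5}, which is precisely the ``Reid identification valid over arbitrary \(k\)'' that you invoke. Where you diverge is that you first route \((\bCH2_{\Bl_CZ/k})^0\) through Theorem~\ref{thm:jac0} and Theorem~\ref{thm:EqnsForCurves}\eqref{part:isoJacGammaToPrym} to get \(\bPic^0_{\Gamma_{\Deltatilde/\Delta}}\), and then use Torelli to match \(\Gamma_{\Deltatilde/\Delta}\) with \(\Gamma_Z\). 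This detour is logically superfluous for the statement as written: once you have the direct citation for \((\bCH2_{Z/k})^0\cong\bPic^0_{\Gamma_Z}\) and the BW product decomposition, the displayed chain of isomorphisms is already complete. The detour also quietly imports extra hypotheses (that \(\Bl_CZ\to\PP^2\) is geometrically standard and ordinary with smooth geometrically irreducible discriminant cover, which is what Sections~\ref{sec:EPrymScheme}--\ref{sec:IJ-torsors} require) and a nontrivial arithmetic Torelli statement for hyperelliptic curves; neither is wrong --- genus \(2\) curves are hyperelliptic, so Torelli over \(k\) does recover the curve from its principally polarized Jacobian --- but it buys you only the bonus identification \(\Gamma_{\Deltatilde/\Delta}\cong\Gamma_Z\), which the proposition does not ask for. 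Finally, your argument for \(\GammaZ(k)\neq\emptyset\) is a self-contained expansion of the paper's one-line appeal to \cite{ht-intersection-quadrics}*{Section 2.3}: the span \(\Pi=\langle C\rangle\) is a \(k\)-rational \(2\)-plane, the restrictions \(Q_i|_\Pi=c_iq_C\) single out the member \(Q_{[-c_2:c_1]}\) of the pencil containing \(\Pi\), the scalars cannot both vanish because \(Z\) contains no \(2\)-plane (your intersection-number computation \(4n=1\) is the standard way to see this), and the image of \([\Pi]\in\calF_2(\Bl_Z\PP^5/\PP^1)(k)\) under the Stein factorization is the desired point. This is a legitimate and arguably more transparent version of the paper's citation.
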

    \begin{proof}
        The isomorphism \((\bCH2_{Z/k})^0\simeq\bPic^0_{\Gamma_Z}\) is shown in \cite{HT-intersection-quadrics}*{Section 11.6}
        and \cite{bw-ij}*{Theorem 4.5}, and by Proposition~\ref{prop:BW3.10}, we have the isomorphism \((\bCH2_{Z/k})^0\simeq(\bCH2_{\Bl_C Z/k})^0\). The conic \(C\) is the intersection of \(Z\) with a 2-plane in a member of the associated pencil, so \(C\) corresponds to a \(k\)-point on \(\GammaZ\) \cite{HT-intersection-quadrics}*{Section 2.3}.
    \end{proof}
    
    \begin{rmk}
    The condition of Proposition~\ref{prop:NecConds2Quadrics} is necessary but not sufficient for a degree \(4\) conic bundle to arise from an intersection of two quadrics. If \(Y\) is the conic bundle of Theorem~\ref{thm:NoRatlityObsSuff}\eqref{part:IJTnotenough} then \(\Gamma(\RR)\neq\emptyset\), but by Corollary~\ref{cor:BlowupIJT}, \(Y_\RR\) cannot be obtained from a complete intersection of quadrics by projection from a conic over \(\RR\).
    \end{rmk}
    
\subsubsection{Complete intersections of two quadrics over fields with trivial Brauer group} In Theorem~\ref{thm:Br2trivial-IJT} we showed that over a field with \((\Br k)[2]=0\), the intermediate Jacobian torsor obstruction characterizes \(k\)-rationality for degree \(4\) conic bundles \(X\to\PP^2\). We end this section with an example, suggested by the referee, which shows that, over such fields, \(X\) can still have an IJT obstruction to rationality. Using the intersections of two quadrics in \cite{bw-ij}*{Theorem 4.14}, we construct a conic bundle \(\pi\colon X\to\PP^2\) with degree \(4\) discriminant over a field with \(\Br k = 0\) and such that \(X\) has an IJT obstruction to rationality.
\begin{example}\label{exmp:Brk=0-IJT-example} 
    Let \(\kappa\) be an algebraically closed field of characteristic different from \(2\), and let \(k=\kappa((t))\). For \(a_0,\ldots,a_5\in\kappa\) pairwise distinct elements, define the diagonal matrices as in~\cite{bw-ij}*{Theorem 4.14} 
    \[
    M_1 := \textup{Diag}([t,t,1,1,1,1]), \quad
    M_2 := \textup{Diag}([a_0t,a_1t,a_2,a_3,a_4,a_5]).
    \] 
    Let \(Q_i\subset\mathbb P^5\) be the quadric corresponding to \(M_i\), and let \(Z=Q_1\cap Q_2\). Benoist and Wittenberg prove that \(Z\) has an intermediate Jacobian torsor obstruction to rationality~\cite{bw-ij}*{Theorem 4.14}. 
    Furthermore, the quadric \(Q_1\) has square discriminant and since \(k\) is a \(C_1\)-field~\cite{CTS-Brauer-book}*{Theorem 1.2.13}, \(Q_1\) contains a plane defined over \(k\) and hence \(Z\) contains a conic \(C\) defined over \(k\) \cite{HT-intersection-quadrics}*{Remark 13}.  Thus, \(X:= \Bl_C Z\) is an irrational degree \(4\) conic bundle.
\end{example}


\bibliographystyle{alpha}
\bibliography{references}

\end{document}